\def\namedlabel#1#2{\begingroup
#2%
\def\@currentlabel{#2}%
\phantomsection\label{#1}\endgroup
}
\theoremstyle{theorem} 
\newtheorem{theorem}{Theorem}[section]
\newtheorem{corollary}[theorem]{Corollary}
\newtheorem{lemma}[theorem]{Lemma}
\newtheorem{proposition}[theorem]{Proposition}
\newcommand{\Cech}{ \check{\rm{C}}}
\newtheorem{theoremx}{Theorem}
\theoremstyle{definition} 
\newtheorem{definition}[theorem]{Definition}
\newtheorem{question}[theorem]{Question}
\newtheorem{example}[theorem]{Example}
\newtheorem{remark}[theorem]{Remark}
\numberwithin{equation}{subsection}
\newcommand{\NN}{\mathbb{N}}
\newcommand{\RR}{\mathbb{R}}
\newcommand{\ZZ}{\mathbb{Z}}
\newcommand{\QQ}{\mathbb{Q}}
\newcommand{\FF}{\mathbb{F}}
\newcommand{\CC}{\mathbb{C}}
\newcommand{\m}{\mathfrak{m}}
\newcommand{\cR}{{\mathcal R}}
\newcommand{\cO}{{\mathcal O}}
\newcommand{\cC}{{\mathcal C}}
\def\@tocline#1#2#3#4#5#6#7{\relax
  \ifnum #1>\c@tocdepth % then omit
  \else
    \par \addpenalty\@secpenalty\addvspace{#2}%
    \begingroup \hyphenpenalty\@M
    \@ifempty{#4}{%
      \@tempdima\csname r@tocindent\number#1\endcsname\relax
    }{%
      \@tempdima#4\relax
    }%
    \parindent\z@ \leftskip#3\relax \advance\leftskip\@tempdima\relax
    \rightskip\@pnumwidth plus4em \parfillskip-\@pnumwidth
    #5\leavevmode\hskip-\@tempdima
      \ifcase #1
       \or\or \hskip 1.9em \or \hskip 2em \else \hskip 3em \fi%
      #6\nobreak\relax
    \dotfill\hbox to\@pnumwidth{\@tocpagenum{#7}}\par
    \nobreak
    \endgroup
  \fi}
\newcommand{\Hom}{\operatorname{Hom}}
\newcommand{\Ker}{\operatorname{Ker}}
\newcommand{\CoKer}{\operatorname{Coker}}
\newcommand{\Ass}{\operatorname{Ass}}
\newcommand{\IM}{\operatorname{Im}}
\newcommand{\fpt}{\operatorname{fpt}}
\renewcommand{\a}{\mathfrak{a}}
\renewcommand{\b}{{J}}
\definecolor{blue-violet}{rgb}{0.54, 0.17, 0.89}
\definecolor{Blue}{rgb}{0.01, 0.28, 1.0}
\definecolor{gGreen}{rgb}{0.2, 0.8, 0.2}
\definecolor{Green}{rgb}{0.04, 0.85, 0.32}
\begin{document}

\title[$D$-modules, Bernstein-Sato polynomials $\&$ $F$-invariants]{$D$-modules, Bernstein-Sato polynomials and $F$-invariants of direct summands}

\author[J. \`Alvarez Montaner]{Josep \`Alvarez Montaner{$^1$}}
\address{Deptartament de Matem\`atiques\\
Universitat Polit\`ecnica de Catalunya\\ Av. Diagonal 647, Barcelona
08028, Spain} \email{Josep.Alvarez@upc.edu}

\author[C. Huneke]{Craig Huneke{$^2$}}
\address{Department of Mathematics, University of Virginia, Charlottesville, VA 22904-4135, USA}
\email{huneke@virginia.edu}

\author[L. N\'u\~nez-Betancourt]{Luis N\'u\~nez-Betancourt${^3}$}
\address{Centro de Investigaci\'on en Matem\'aticas, Guanajuato, Gto., M\'exico}
\email{luisnub@cimat.mx}

\thanks{{$^1$}The first author was partially supported by Generalitat de Catalunya 2014SGR-634 project
and Spanish Ministerio de Econom\'ia y Competitividad
MTM2015-69135-P}
\thanks{{$^2$}The second author was partially supported by the NSF Grant 1460638}
\thanks{{$^3$}The third author was partially supported by the NSF Grant 1502282 }

\subjclass[2010]{Primary 	14F10,	13N10, 13A35, 16S32   ; Secondary 13D45, 14B05, 	14M25, 13A50 .}
\keywords{$D$-modules; Bernstein-Sato polynomial; direct summands; ring of invariants;  local cohomology;  $F$-jumping numbers; test ideals; $F$-thresholds.}

\maketitle
\begin{abstract}
We study the structure of $D$-modules over a ring $R$  which is a direct summand of a polynomial or a
power series ring $S$ with coefficients over a field.
We relate properties of $D$-modules over $R$ to $D$-modules over $S$.
 We show that the localization $R_f$ and the local cohomology module $H^i_I(R)$ have finite length as $D$-modules over $R$.  
 Furthermore, we show the existence of the Bernstein-Sato polynomial for elements
  in $R$. In positive characteristic, we use this relation between $D$-modules over
 $R$ and $S$ to show that the set of $F$-jumping numbers of an ideal $I\subseteq R$ is contained in the set of $F$-jumping numbers of its extension in $S$. As a consequence, the $F$-jumping numbers of  $I$ in $R$ form a discrete set of rational numbers. 
 We also relate the Bernstein-Sato polynomial in $R$ with the $F$-thresholds and the $F$-jumping numbers in $R$.
\end{abstract}
\tableofcontents

%%%%%%%%%%%%%%%%%%%%%%%%%%%%%%%%%%%%%%%%%%%%%%%%%%%%%%%%%%%%%
\section{Introduction}
%%%%%%%%%%%%%%%%%%%%%%%%%%%%%%%%%%%%%%%%%%%%%%%%%%%%%%%%%%%%%

Direct summands of polynomial rings play an important role in
representation theory, combinatorial algebra, commutative algebra,
and algebraic geometry.  For instance, rings of invariants under a
linearly reductive group actions, and  affine toric rings are direct
summands of polynomial rings. Rings corresponding
to Grassmannian,  Veronese and Segre  varieties also belong to this
family of rings. If one assumes that the field has characteristic
zero, then the ring associated to the $n\times m$ matrices of rank
at most $t$ is a direct summand of a polynomial ring. Since the
Hochster-Roberts Theorem \cite{HoRo}, which states that direct
summands of regular rings are Cohen-Macaulay, it has been expected
that direct summands behave similarly to regular rings
in certain aspects. 
For instance, direct summands are normal rings \cite{HHStrongFreg} with rational singularities 
in characteristic zero \cite{Boutot} or strongly $F$-regular singularities in prime characteristic \cite{HHStrongFreg}.
 The main goal of this manuscript is to find structural properties of
important $D$-modules over direct summands of regular rings, which
resemble the situation over regular rings.

We first focus on modules over the ring of $K$-linear differential
operators, $D_{R|K}$, where $K$ is a field, and $R$ is a direct summand of a regular
$K$-algebra $S$.  
Significant  research has been focused on 
the finite generation of $D_{R|K}$ for rings of invariants
\cite{Kantor,LS,Levasseur,TravesOrbifolds,Schwarz,MussonVdB,LSDiffOpInv,PleskenRobertz},
Grassmann varieties \cite{TravesGrass}, and  toric affine algebras
\cite{Musson,Jones,MussonJPPA,SaitoTraves,SaitoTraves2,SaitoTakahashi}.
In addition, the  simplicity of $D_{R|K}$ as a ring and the
simplicity of $R$ as $D_{R|K}$-module have also been a topic of
interest \cite{SmithVDB,DModFSplit,TravesHS}. Unfortunately, the
rings of differential operators are not always finitely generated
for singular rings, even in characteristic zero \cite{DiffNonNoeth}.
Instead of focusing on the ring $D_{R|K}$, we switch gears and study
the structure of an important class of  $D_{R|K}$-modules.

We  study the structure of localization and local cohomology modules
over direct summands. For this we recall their finiteness properties
over regular rings. Suppose that $S$ is either  $K[x_1,\ldots,x_n]$
or $K[[x_1,\ldots,x_n]]$. If $K$ has characteristic zero, then the
ring of $K$-linear differential operators,  $D_{S|K},$ equals the
$S$-algebra generated by the partial derivatives, $S\left\langle
\frac{\partial}{\partial_{x_1}}, \ldots,
\frac{\partial}{\partial_{x_n}}\right\rangle$. In particular,
$D_{S|K}$ is a left and right Noetherian ring.  Furthermore, every
localization $S_f$, and therefore every local cohomology module
$H^i_I(S)$, has finite length as $D_{S|K}$-module. If $K$ has prime
characteristic,   $D_{S|K}$ is no longer Noetherian. However, $S_f$
and $H^i_I(S)$ still have finite length as $D_{S|K}$-modules. These
finiteness  properties have a significant impact on the study of
local cohomology over regular rings \cite{LyuDMod,LyuFmod}. Our
first result recovers these properties for direct summands.

\begin{theoremx}[{see Theorem \ref{ThmFinLenLC}}]\label{MainLen}
Let $K$ be a field and $S$ be either  $K[x_1,\ldots,x_n]$ or $K[[x_1,\ldots,x_n]]$, and $R$ be a $K$-subalgebra.
Suppose that $R$ is a direct summand  of $S$.
Then $R_f$ and $H^i_I(R)$ have finite length as $D_{R|K}$-modules for every $f\in R$ and any ideal $I\subseteq R.$
Furthermore,
$$
\lambda_{D_{R|K}}(R_f)\leq \lambda_{D_{S|K}}(S_f)
\hbox{ and }\lambda_{D_{R|K}}(H^i_I(R))\leq \lambda_{D_{S|K}}(H^i_{IS}(S)).
$$
\end{theoremx}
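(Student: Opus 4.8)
The key structural fact is that $R$ is a direct summand of $S$ as an $R$-module, so there is an $R$-linear splitting $\beta: S \to R$ of the inclusion $\iota: R \hookrightarrow S$. The plan is to transport the $D_{S|K}$-module finite-length statement for $S_f$ and $H^i_{IS}(S)$ down to $R$. The main point I would establish first is a compatibility between differential operators on $R$ and on $S$: every element of $D_{R|K}$ should be realized, or at least "seen", inside $D_{S|K}$ via the splitting. Concretely, I would show that for $\delta \in D_{R|K}$ one can build an operator on $S$, or conversely that $\beta$ and $\iota$ allow one to view an $R$-submodule of an $S$-module as the image of a $D_{S|K}$-submodule. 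The cleanest formulation: if $M$ is a $D_{S|K}$-module, then $M$ is also a $D_{R|K}$-module by restriction of scalars along a suitable ring map $D_{R|K} \to D_{S|K}$ (or at least $M$ carries a compatible $D_{R|K}$-action), and $\beta \otimes \mathrm{id}$ or the induced map on local cohomology is $D_{R|K}$-linear and split surjective onto the corresponding module over $R$.

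**The comparison of $D$-module structures.** I would prove that $S_f \cong R_f \otimes_R S$ (for $f \in R$) and $H^i_{IS}(S) \cong H^i_I(R) \otimes_R S$, and that under these identifications the splitting $\beta$ induces a surjection of $D_{R|K}$-modules $S_f \twoheadrightarrow R_f$ and $H^i_{IS}(S) \twoheadrightarrow H^i_I(R)$ that is split as a map of $R$-modules, with the inclusions $R_f \hookrightarrow S_f$ and $H^i_I(R) \hookrightarrow H^i_{IS}(S)$ being $D_{R|K}$-linear. The reason $R_f \hookrightarrow S_f$ is $D_{R|K}$-linear is that any $K$-linear differential operator on $R$ of order $\le \ell$ extends (after composing with $\iota$) to one on $S$ — this uses that $R \to S$ is a split inclusion of $K$-algebras, so differential operators "come from" $S$; I would cite or adapt the standard fact that $D_{R|K}$ acts on $S$ compatibly when $R$ is a direct summand (this is the technical heart and presumably set up in an earlier section of the paper). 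Granting this, $R_f$ is a $D_{R|K}$-submodule of $S_f$, and likewise for local cohomology.

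**Descending finite length.** Once $R_f$ is realized as a $D_{R|K}$-submodule of $S_f$, I claim $\lambda_{D_{R|K}}(R_f) \le \lambda_{D_{S|K}}(S_f)$. Here is the argument: take any strictly increasing chain of $D_{R|K}$-submodules $0 = N_0 \subsetneq N_1 \subsetneq \cdots \subsetneq N_t = R_f$. Apply $- \otimes_R S$; since $S$ is faithfully flat over $R$ (a direct summand inclusion is pure, hence faithfully flat when $S$ is module-finite — more generally purity gives faithful flatness here, or at least the needed injectivity/strictness), each $N_j \otimes_R S$ is a $D_{S|K}$-submodule of $R_f \otimes_R S = S_f$, and the chain stays strictly increasing by faithful flatness. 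Hence $t \le \lambda_{D_{S|K}}(S_f)$, and taking the supremum over chains gives both finiteness of $\lambda_{D_{R|K}}(R_f)$ and the stated inequality. The identical argument with $H^i_I(R) \hookrightarrow H^i_{IS}(S)$, using $H^i_{IS}(S) \cong H^i_I(R) \otimes_R S$ (flat base change for local cohomology) and the $D_{R|K}$-submodule chains, yields the second inequality. Finiteness over $S$ is the known input: $\lambda_{D_{S|K}}(S_f) < \infty$ and $\lambda_{D_{S|K}}(H^i_{IS}(S)) < \infty$ for $S$ regular (Lyubeznik, Bjork in characteristic zero; Lyubeznik's $F$-modules or Bögvad in prime characteristic).

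**Main obstacle.** The delicate step is verifying that the inclusion $R \hookrightarrow S$ is genuinely compatible with differential operators in the strong sense needed — i.e., that a $D_{R|K}$-module structure on $R_f$ interacts correctly with the $D_{S|K}$-module structure on $S_f$ so that $D_{R|K}$-submodules of $R_f$ extend to $D_{S|K}$-submodules of $S_f$ (and strictness is preserved). In prime characteristic $D_{R|K}$ is non-Noetherian and $D_{R|K} = \bigcup_e \operatorname{End}_{R^{p^e}}(R)$, so I would work level by level with the $p^e$-linear structure, using that $R^{p^e} \hookrightarrow S^{p^e}$ is again a direct summand and that $\operatorname{End}_{R^{p^e}}(R)$-submodules correspond, via $- \otimes_{R^{p^e}} S^{p^e}$ or via the splitting $\beta$, to $\operatorname{End}_{S^{p^e}}(S)$-submodules. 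The other place to be careful is that $\beta$ is only $R$-linear, not a priori $D_{R|K}$-linear; one must check it is in fact compatible with the $D_{R|K}$-actions (equivalently, that the Reynolds-type operator commutes with differential operators), which is exactly where the direct-summand hypothesis is used. I expect these compatibilities to have been established in the preceding sections, so the proof of the theorem itself reduces to the flat-base-change identifications and the faithful-flatness chain argument above.
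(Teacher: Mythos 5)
Your proposal correctly identifies the overall shape of the argument — transport finite length from $S$ down to $R$ via the splitting $\beta$ — and even senses where the technical work must lie (compatibility of $\beta$ with differential operators). However, the concrete mechanism you propose for descending length has a fatal gap.

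The problem is the faithful-flatness step. You claim that because $R \hookrightarrow S$ is pure (split), it is faithfully flat, and hence applying $-\otimes_R S$ to a strictly increasing chain of $D_{R|K}$-submodules of $R_f$ yields a strictly increasing chain of $D_{S|K}$-submodules of $S_f$. Purity does \emph{not} imply flatness. In this paper's setting $S$ is essentially never flat over $R$ unless $R$ is itself regular: e.g.\ for $R=K[x^2,xy,y^2]\subseteq S=K[x,y]$ or $R=K[ac,ad,bc,bd]\subseteq S=K[a,b,c,d]$, flatness of $S$ over $R$ would force $R$ to be regular by descent of regularity, a contradiction. Without flatness, the maps $N_j\otimes_R S\to S_f$ need not be injective, so you cannot even view the tensored-up modules as submodules of $S_f$, and the claimed isomorphism $H^i_{IS}(S)\cong H^i_I(R)\otimes_R S$ (flat base change for local cohomology) also fails. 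A secondary issue is that even if $S$ were flat, $N_j\otimes_R S$ would only be an $S$-submodule of $S_f$, not obviously a $D_{S|K}$-submodule, so the chain argument still wouldn't bound length in the category of $D_{S|K}$-modules. There is also a conceptual mix-up in the middle of your argument: $S_f$ carries no natural $D_{R|K}$-module structure (there is no map $D_{R|K}\to D_{S|K}$; indeed even the map in the other direction $D_{S|K}^G\to D_{S^G|K}$ is generally not surjective), so speaking of $R_f\hookrightarrow S_f$ as a map of $D_{R|K}$-modules, or of $\beta$ being $D_{R|K}$-linear, is not meaningful.

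The paper's mechanism avoids all of this. The key lemma in the other direction is that pushing operators \emph{down} works: for $\delta\in D_{S|K}^n$ one has $\tilde\delta := \beta\circ\delta|_R\in D_{R|K}^n$ (Lemma~\ref{LemmaRestriction}). Then instead of $N_j\otimes_R S$, one takes $D_{S|K}N_j$ — the $D_{S|K}$-submodule of $S_f$ generated by $N_j$ — which is automatically a $D_{S|K}$-submodule. Strictness is preserved because the differential compatibility of the splitting $\theta\colon S_f\to R_f$ (namely $\theta(\delta\cdot v)=\tilde\delta\cdot v$ for $v\in R_f$) forces $D_{S|K}V=D_{S|K}W \Rightarrow V=W$ for $D_{R|K}$-submodules $V,W\subseteq R_f$ (Lemma~\ref{LemmaEqSubmod}): any $v=\sum\delta_i w_i$ with $w_i\in W$ gives $v=\theta(v)=\sum\tilde\delta_i w_i\in W$. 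This is the piece your proposal is missing, and it is precisely what replaces the faithful-flatness argument. Once you have Proposition~\ref{PropFinLenght} in hand, the rest (passing to local cohomology via the $\check{\mathrm{C}}$ech complex of differential direct summands and invoking known finite length over $S$) goes exactly as you describe.
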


The previous theorem holds for a larger class of modules (see Theorem \ref{ThmFinLenC(R,K)}),
which includes $H^{i_1}_{I_1}(\cdots (H^{i_\ell}_{I_\ell}(R))\cdots).$
Lyubeznik introduced a category of
$D_{R|K}$-modules  \cite{Lyu2,LyuUMC}, denoted by $C(R,K)$, to compensate for the lack of the
notion of holonomic $D$-modules in prime and mixed characteristic at the time.
The category $C(R,K)$ resembles the class of holonomic modules in
the sense that both consist of $D$-modules of finite length in equal characteristic.

Since its introduction \cite{BernsteinPoly,SatoPoly}, the
Bernstein-Sato polynomial $b_f(s)$ has been an important invariant of a hypersurface in characteristic  zero. In fact,  $b_f(s)$  is an important object in
the study of singularities. For instance, it relates to the theory of
vanishing cycles \cite{Del}, $V$-filtrations  and monodromies
\cite{Malgrange,KashiwaraV}, jumping numbers of multiplier ideals
\cite{Kol,ELSV,BudurSaito}, and zeta functions \cite{DenefLoeser}. In
our second main result, we develop the theory of the Bernstein-Sato
polynomial over a direct summand.  
To the best of our knowledge,
this is one of the first efforts to extend the theory of the Bernstein-Sato polynomial for singular rings 
(see  \cite{HsiaoMatusevich} for an study of Bernstein-Sato polynomials of ideals  in a normal toric ring).

\begin{theoremx}[{see Theorem \ref{ThmGralBShyp}}]\label{MainBSpoly}
Let $K$ be a field of characteristic zero, let $S$ be either
$K[x_1,\ldots,x_n]$ or $K[[x_1,\ldots,x_n]]$, and let $R$  be a
$K$-subalgebra. Suppose that $R$ is a direct summand  of $S$. Then for
every element $f\in R\setminus\{0\}$, there exists $\delta(s)\in
D_{R|K}[s]$ and $b(s)\in \QQ[s]$ such that
\begin{equation}\label{EqThmBS}
\delta(t)\cdot f^{t+1}=b(t)f^t
\end{equation}
for every $t\in\ZZ.$
As a consequence, $R_f$ is a cyclic $D_{R|K}$-module.
\end{theoremx}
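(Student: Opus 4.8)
The plan is to reduce the existence of the Bernstein--Sato functional equation over $R$ to the classical one over $S$, using the splitting $\rho\colon S\to R$ of the inclusion $R\hookrightarrow S$. First I would invoke the classical theory (Bernstein, Björk) over the regular ring $S$: for $f\in R\setminus\{0\}\subseteq S$ there exist $\widetilde\delta(s)\in D_{S|K}[s]$ and $0\ne b_f(s)\in\QQ[s]$ (the Bernstein--Sato polynomial of $f$ in $S$) with $\widetilde\delta(s)\bullet f^{s+1}=b_f(s)f^{s}$ in $S_f[s]f^s$; specializing $s\mapsto t$ gives $\widetilde\delta(t)\cdot f^{t+1}=b_f(t)f^{t}$ in $S_f$ for all $t\in\ZZ$. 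The goal is to manufacture from $\widetilde\delta$ an operator $\delta(s)\in D_{R|K}[s]$ achieving the same equation inside $R_f$.

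The key point is the interaction between differential operators and the Reynolds-type splitting. Since $R$ is a direct summand, $\rho\colon S\to R$ is an $R$-linear retraction of the inclusion. The basic transfer mechanism — which should already be available from the machinery behind Theorem~\ref{ThmFinLenLC} (this is exactly how one gets $D_{R|K}$-module structures on localizations and local cohomology of $R$ from those over $S$) — is that for a differential operator $\theta\in D_{S|K}$ the composite $\rho\circ\theta\circ\iota$, where $\iota$ denotes inclusion $R\hookrightarrow S$ (or the induced map on localizations $R_f\to S_f$), lands in $D_{R|K}$, i.e. conjugating a differential operator on $S$ by the pair $(\iota,\rho)$ produces a differential operator on $R$ of no larger order. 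I would apply this to each coefficient (in $s$) of $\widetilde\delta$, obtaining $\delta(s):=\rho\circ\widetilde\delta(s)\circ\iota\in D_{R|K}[s]$. Now for $f\in R$ and $t\in\ZZ$ we have $f^{t+1}\in R_f$, its image in $S_f$ is the same power $f^{t+1}$, so $\widetilde\delta(t)\cdot f^{t+1}=b_f(t)f^{t}$ holds in $S_f$; applying $\rho$ and using $R$-linearity of $\rho$ together with $\rho(f^{t})=f^{t}$ (as $f^t\in R_f$ and $\rho|_{R_f}=\id$) yields $\delta(t)\cdot f^{t+1}=b_f(t)f^{t}$ in $R_f$. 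Taking $b(s):=b_f(s)\in\QQ[s]$ gives equation~\eqref{EqThmBS}.

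For the consequence that $R_f$ is a cyclic $D_{R|K}$-module: taking $t=-1$ in \eqref{EqThmBS} gives $\delta(-1)\cdot 1 = b(-1)f^{-1}$, and more generally descending induction on $-t$ shows every negative power $f^{-m}$ lies in $D_{R|K}\cdot 1$ provided $b(-m)\ne 0$ for all $m\ge 1$. The classical fact that the roots of $b_f(s)$ are negative rational numbers in $(-n,0)$ does not transfer for free, so instead I would argue: the $D_{S|K}$-module $S_f$ is generated by $\tfrac1f$ over $S$ (Bernstein), hence the roots of $b_f$ lie in $\ZZ_{<0}$ up to the usual normalization — more carefully, one uses that $b_f(s)$ can be chosen with all roots negative, so $b(-m)\ne0$ for $m\ge 1$, giving $f^{-m}\in D_{R|K}\cdot\tfrac1f\subseteq D_{R|K}\cdot 1$ after absorbing one factor; combined with $R\subseteq D_{R|K}\cdot 1$ this shows $R_f=D_{R|K}\cdot 1$.

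The main obstacle I anticipate is \emph{justifying the transfer of differential operators} $\theta\mapsto\rho\circ\theta\circ\iota$ cleanly at the level of localizations: one must check that this composite is genuinely a differential operator on $R_f$ (not merely an additive map), that its order is controlled, and that it is compatible with the $R$-module structure and with taking $s$ as a parameter — i.e. that the construction is $\QQ[s]$-linear so that polynomial coefficients are preserved. In characteristic zero $D_{S|K}=S\langle\partial_1,\dots,\partial_n\rangle$ makes the order bookkeeping concrete, and the splitting being $R$-linear is exactly what forces $\rho\circ\theta\circ\iota$ to satisfy the iterated-commutator characterization of differential operators; but spelling this out rigorously, including the passage from $S$ and $R$ to $S_f$ and $R_f$, is the technical heart and is presumably where the earlier sections of the paper are invoked.
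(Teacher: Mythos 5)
Your proposal matches the paper's proof: the paper establishes that $\widetilde\delta := \beta\circ\delta|_R \in D_{R|K}$ (Lemma~\ref{LemmaRestriction}) and that this transfer is compatible with the induced $D$-module action on localizations (Proposition~\ref{PropLoc}), then applies these to the classical functional equation over $S$ exactly as you outline (Theorem~\ref{ThmGralBShyp}, Corollary~\ref{BSpoly_exists}), with cyclicity following from the negativity of the roots of $b^S_f(s)$ just as you argue. The ``technical heart'' you flag --- that $\rho\circ\widetilde\delta(t)\circ\iota$ acting on $R_f$ by the intrinsic $D_{R|K}$-localization rule agrees with $\rho$ applied to the $D_{S|K}$-action on $S_f$, even for negative powers of $f$ --- is precisely what Proposition~\ref{PropLoc} verifies by induction on the order of the operator, so your proposal is the paper's argument with the lemma correctly identified rather than proved.
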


Under the hypothesis of the previous theorem, we call the Bernstein-Sato polynomial of $f$ in $R$, $b^R_f(s)$,
the monic polynomial of smallest
degree satisfying the Equation \ref{EqThmBS} for some $\delta(s)$.
 As a consequence of the proof of Theorem \ref{MainBSpoly}, the roots of $b^R_f(s)$ consists 
of negative rational numbers when $R$ is a direct summand of a polynomial ring. %\cite{Malgrange2,Kashiwara}.} 
We hope that $b^R_f(s)$  relates to other invariants that measure singularities of hypersurfaces in direct summands.

In Remark \ref{RmkGralBS}, we show how different versions 
of the Bernstein-Sato theory can be extended to direct summands, which include  
the one given by Sabbah  \cite{Sab} and the one given by Budur,  Musta{\c{t}}{\v{a}}, and Saito \cite{BMSBS}.
Very recently, Hsiao and  Matusevish extended and studied Bernstein-Sato polynomials
associated to ideals in a normal toric ring \cite{HsiaoMatusevich}.
We note that not every $K$-algebra has  Bernstein-Sato polynomials (see Example \ref{ExampleNoBS}).

We point out that it was previously known that if $R$ is an affine
toric ring and $I\subseteq R$  is a monomial ideal, then $H^i_I(R)$ has finite length as $D_{R|K}$ module
\cite{Hsiao}. In this case, it was also shown that if $f$ is a
monomial, then $R_f$ is a cyclic $D_{R|K}$-module \cite{Hsiao}.  
Even for toric rings, Theorems \ref{MainLen} and \ref{MainBSpoly} recover
and extend these results, as the monomial hypothesis is not longer needed.

In the second part of the paper we shift to positive characteristic to study the ring of
differential operators, $D_{R}$. We show that $R_f$ is generated
by $\frac{1}{f}$ as $D_{R}$-module for direct summands of regular
rings (see Proposition \ref{PropLocCyclicPrime}).
% This extends previous results know for regular rings \cite{AMBL}, and for rings with finite $F$-representation type \cite{TT}.
We use this fact and the ideas behind the proof of Theorems
\ref{MainLen} and \ref{MainBSpoly} to study the $F$-jumping numbers
of $R$. These invariants are used to measure singularities in prime characteristic. Specifically, the $F$-jumping numbers
are the values where the generalized test ideals change \cite{H-Y}.
 The test ideals can be seen as an
analogue, in prime characteristic, of multiplier ideals
\cite{KarenFrational,KarenComm,H-Y}. In particular, the $F$-pure
threshold, which is the first $F$-jumping number, serves as the
analogue of the log-canonical threshold \cite{H-Y,MTW}.
We recall that multiplier ideals are
defined using %Hironaka's  \cite{Hironaka}
resolution of singularities (see \cite{LazBook2}). In contrast, test
ideals are defined in terms of tight closure theory
\cite{HoHu1,HoHu2,HoHu3,H-Y} so, a priori, it is not clear that
$F$-jumping numbers form a discrete set of rational numbers. Since
the introduction of the test ideals \cite{H-Y}, intense research has
been devoted around this question
\cite{BMS-MMJ,BMS-Hyp,TestQGor,BSTZ,ST-NonPrincipal,CEMS}.
%This task is differs from the analogue in characteristic zero as currently
%there is no resolution of singularities in prime characteristic.
Efforts have also been dedicated to compute these invariants, mainly
over regular rings
\cite{Binomials,ST-NonPrincipal,Diagonals,QuasiHomog}.  Our main
result in this direction is that the set of the $F$-jumping numbers of
$R$ is a subset of $F$-jumping numbers of $S$.

\begin{theoremx}[{see Theorem \ref{ThmEqTestIdeals} and Corollary \ref{CorMainDS}}]\label{MainFjump}
Let $S$  be a regular $F$-finite domain, and $R$ be an $F$-finite ring. 
Suppose that $R\subseteq S$ and that $R$ is a direct summand of $S$. Let $I\subseteq R$ denote an ideal. Then, the set
$F$-jumping numbers of $I$ in $R$ is a subset of the set
$F$-jumping numbers of $I S$ in $S$. In particular, the set
$F$-jumping numbers of $I$ in $R$ is formed by rational  numbers
and has no accumulation points. As a consequence, the $F$-pure
threshold of $I$ is a rational number.
\end{theoremx}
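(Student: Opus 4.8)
The plan is to transport test ideals along the split inclusion $R \subseteq S$. The key structural fact I would use is that, since $R$ is a direct summand of $S$, there is an $R$-linear retraction $\beta\colon S \to R$ with $\beta|_R = \id_R$; this makes every $R$-module a direct summand (as $R$-module) of its extension to $S$, and it interacts well with the Frobenius because $S$ is $F$-finite and $R$ inherits $F$-finiteness. First I would recall the definition of the test ideal $\tau(R, I^\lambda)$ via the operators $R \to R^{1/p^e}$ (or equivalently via $e$-th Frobenius powers), and the definition of an $F$-jumping number as a value $\lambda$ where $\tau(R,I^{\lambda-\varepsilon}) \neq \tau(R, I^\lambda)$ for all $\varepsilon > 0$. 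The heart of the argument is to prove the comparison
\begin{equation*}
\tau(S,(IS)^\lambda) \cap R = \tau(R,I^\lambda)
\end{equation*}
for every $\lambda \geq 0$, or at least the two containments that force the jumping numbers of $I$ in $R$ to sit among those of $IS$ in $S$. The containment $\tau(R, I^\lambda) \subseteq \tau(S,(IS)^\lambda)$ (after extending, then contracting) should be straightforward: test elements and the defining colon ideals only grow when passing to the larger regular ring, and $S$ is faithfully flat over $R$. For the reverse containment one exploits the splitting: given $z \in \tau(S,(IS)^\lambda)\cap R$, one applies $\beta$ together with the fact that $\beta$ commutes appropriately with the trace/Frobenius structure (using that $R$ is a direct summand, hence $S^{1/p^e} = R^{1/p^e} \oplus (\text{complement})$ as $R^{1/p^e}$-modules) to push the membership witness back down into $R$. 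This is essentially the same mechanism used in the proof of Theorems~\ref{MainLen} and \ref{MainBSpoly} to descend $D$-module generators along $\beta$, so I would lean on Proposition~\ref{PropLocCyclicPrime} and the surrounding machinery.

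Once the comparison $\tau(R,I^\lambda) = \tau(S,(IS)^\lambda)\cap R$ is established, the conclusion is formal. If $\lambda$ is an $F$-jumping number of $I$ in $R$, then for all small $\varepsilon > 0$ we have $\tau(R,I^{\lambda-\varepsilon}) \subsetneq \tau(R,I^\lambda)$; contracting from $S$, this forces $\tau(S,(IS)^{\lambda-\varepsilon}) \subsetneq \tau(S,(IS)^\lambda)$ (if the $S$-ideals agreed, so would their contractions to $R$), so $\lambda$ is an $F$-jumping number of $IS$ in $S$. Thus the $F$-jumping numbers of $I$ in $R$ form a subset of those of $IS$ in $S$. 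Since $S$ is a regular $F$-finite ring, the $F$-jumping numbers of $IS$ are rational and discrete by the known results over regular rings \cite{BMS-MMJ,BMS-Hyp,BSTZ}, hence so are those of $I$ in $R$; and the $F$-pure threshold $\fpt(I)$, being the smallest positive $F$-jumping number of $I$ in $R$, is therefore rational.

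The main obstacle I anticipate is the reverse containment $\tau(S,(IS)^\lambda)\cap R \subseteq \tau(R,I^\lambda)$, that is, checking that the splitting $\beta$ genuinely carries the test-ideal membership data from $S$ down to $R$. The subtlety is that the test ideal $\tau(R,I^\lambda)$ is built from \emph{all} $p^{-e}$-linear maps $R^{1/p^e}\to R$ (equivalently all elements of $\Hom_R(R^{1/p^e},R)$), and one must verify that the restriction-along-$\beta$ of an $S$-side map is of the required form on the $R$-side and that $I^{\lceil \lambda p^e\rceil}$-multipliers behave correctly under this restriction; the compatibility of $\beta$ with $\beta^{1/p^e}$ and with multiplication by elements of $I$ needs to be spelled out carefully. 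A secondary point is handling the case where $R$ is not assumed to contain a test element a priori — but since $R$ is strongly $F$-regular (being a direct summand of a regular $F$-finite ring, by \cite{HHStrongFreg}), test elements exist in abundance in $R$, which removes this worry. I would also need to be mildly careful about the non-principal case, using the formalism of $\tau(R,\a^\lambda)$ for ideals rather than just hypersurfaces, but the splitting argument is insensitive to this distinction.
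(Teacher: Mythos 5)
Your proposal identifies the right tool (the splitting $\beta$) and the right high-level strategy (transfer test-ideal data along $\beta$), but the concrete route you take has two real problems, and as a result it does not establish the theorem.

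First, you invoke that ``$S$ is faithfully flat over $R$.'' This is false for general direct summands: $R\subseteq S$ being split only gives purity, not flatness. For instance, the second Veronese $R=k[x^2,xy,y^2]\subseteq S=k[x,y]$ is a split inclusion, but $S$ is not flat over the singular ring $R$. Flatness is what you lean on to get the ``straightforward'' containment $\tau_R(I^\lambda)\subseteq \tau_S\bigl((IS)^\lambda\bigr)$, so that containment is not justified. In fact, the difficulty is intrinsic: to show $\tau_R(I^\lambda)\subseteq\tau_S\bigl((IS)^\lambda\bigr)$ one needs to \emph{lift} a $p^{-e}$-linear map $F^e_*R\to R$ to a $p^{-e}$-linear map $F^e_*S\to S$, and the splitting $\beta\colon S\to R$ goes in the wrong direction for that. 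The paper proves only the other containment $\tau_S\bigl((IS)^\lambda\bigr)\cap R\subseteq\tau_R(I^\lambda)$ (the one that does descend along $\beta$), and does not claim, let alone prove, the equality $\tau_S\bigl((IS)^\lambda\bigr)\cap R=\tau_R(I^\lambda)$. Your formal deduction of the inclusion of $F$-jumping numbers uses that equality at full strength, so the argument collapses here.

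Second, and more importantly, the paper's route is designed precisely to avoid the equality you want. Instead of comparing $\tau_R$ with $\tau_S\cap R$, the paper proves a stability-transfer statement (Theorem~\ref{ThmEqTestIdeals}): if $\tau_S\bigl((IS)^{\lambda_1}\bigr)=\tau_S\bigl((IS)^{\lambda_2}\bigr)$ then $\tau_R(I^{\lambda_1})=\tau_R(I^{\lambda_2})$, which is all one needs to see that the $F$-jumping numbers of $I$ in $R$ sit inside those of $IS$ in $S$. The mechanism is a three-step translation that your proposal does not supply: (i) use Frobenius descent in the regular ring $S$ to convert an equality of Cartier modules $\cC^e_S(IS)^r=\cC^e_S(IS)^t$ into an equality of $D^{(e)}_S$-modules $D^{(e)}_S(IS)^r=D^{(e)}_S(IS)^t$; (ii) push differential operators from $S$ down to $R$ by $\delta\mapsto\beta\circ\delta|_R$ (Lemma~\ref{LemmaRestriction}) to get $D^{(e)}_R I^r=D^{(e)}_R I^t$; (iii) since $R$ is $F$-pure (being a direct summand of $S$), the map $D^{(e)}_R\to\cC^e_R$ is onto (Remark~\ref{Rem Onto}), so one can descend from differential operators back to Cartier operators and conclude $\cC^e_R I^r=\cC^e_R I^t$ (Lemmas~\ref{LemmaRestrictionDmodCartier} and~\ref{LemmaEqCartier}). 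This detour through $D^{(e)}$ is essential because Cartier operators do not transfer directly along $\beta$ in both directions, whereas differential operators do transfer one way and $F$-purity recovers the Cartier side. Your citation of Proposition~\ref{PropLocCyclicPrime} (cyclicity of $R_f$ as a $D_R$-module) does not supply a substitute for this chain.

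So the gap is: you posit an equality $\tau_S\bigl((IS)^\lambda\bigr)\cap R=\tau_R(I^\lambda)$ whose forward containment you justify by a faithful-flatness claim that is false, and your deduction of the containment of $F$-jumping numbers depends on that equality. The correct strategy, which you did not find, is to prove instead that stability of $\tau_S\bigl((IS)^\lambda\bigr)$ on an interval forces stability of $\tau_R(I^\lambda)$ on the same interval, via the $\cC^e\leftrightarrow D^{(e)}$ dictionary and $F$-purity of $R$.
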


The previous result is relevant when the extension $R\to S$ is not finite, since  the finite case follows from the work of Schwede and Tucker \cite{STFiniteMaps}.

Theorem \ref{MainFjump} gives a set of candidates for the $F$-jumping
numbers in $R$, which could potentially give algorithms to compute the
invariants for direct summands (see Remark \ref{RemAlg}).

Motivated by the work of Musta{\c{t}}{\v{a}}, Takagi and Watanabe
\cite{MTW} for smooth complex varieties, we relate our notion of Bernstein-Sato polynomial to invariants in
positive characteristic, namely the $F$-thresholds and the
$F$-jumping numbers. These invariants are equal for regular rings
\cite{BMS-MMJ}; however, they may differ for singular rings. Suppose that $R$ is a direct summand of a polynomial ring over $\QQ$, and $f\in R$. In
Theorems \ref{ThmBSFThresholds} and \ref{ThmBSFjumps} we show that
the truncated base $p$-expansion of any $F$-jumping number and any
$F$-threshold of the reduction of $f$ modulo $p$ is a root of the
Bernstein-Sato polynomial  $b_f(s)$ modulo $p.$ These  type of results have
been used to recover roots of the Bernstein-Sato polynomials via
methods in prime characteristic  in the regular setting (see \cite{BMS-MRL}).

\section{Background on $D$-modules} \label{Dmod}

In this section we briefly recall the basics on the theory of rings
of differential operators as introduced  by Grothendieck \cite[\S 16.8]{EGA}.

\vskip 2mm

 Let $R$ be a Noetherian ring. The ring of differential
operators of $R$ is the subring $D_{R}\subseteq \Hom_{\ZZ}(R,R)$
whose elements are defined inductively as follows:  the differential
operators of order zero are defined by the multiplication by
elements of $R$, i.e. $D^{0}_{R}\cong R.$ We say that
$\delta\in \Hom_\ZZ(R,R)$ is an operator of order less than or equal
to $m$ if $[\delta,r]=\delta r-r\delta$ is an operator of order less
than or equal to $m-1.$ We  have a filtration $D^{0}_{R} \subseteq
D^{1}_{R} \subseteq \cdots$ given by the order and the ring of
differential operators is defined as
$$D_{R}=\bigcup_{m\in\NN}D^{m}_{R}.$$  If $A\subseteq R$ is a subring, we
denote $D_{R|A} \subseteq D_R$ the subring of differential operators
that are $A$-linear. In particular $D_R=D_{R|\ZZ}$. If $R$
contains a field $K$ of characteristic $p>0$,  every
additive map is $Z/p\mathbb Z$-linear and thus $D_R=D_{R|\mathbb
Z/p\mathbb Z}$. Moreover, let $R^{p^e} \subseteq R$ be the subring
consisting of $p^{e}$ powers of elements of $R$ and set
$D^{(e)}_{R}:= \Hom_{R^{p^e}}(R,R)$. In this case,
$D_{R|K} \subseteq D_{R} \subseteq \bigcup_{e\in\NN}D^{(e)}_{R}$.
If $K$ is a perfect field, then $D_{R|K} = D_{R}$ and $D_{R} =
\bigcup_{e\in\NN}D^{(e)}_{R}$  when $R$ is $F$-finite (see
 \cite[Theorem 2.7]{SmithSP} and \cite[Theorem 1.4.9]{Ye}).

\begin{example}
Let $R$ be either the polynomial ring $A[x_1,\dots,x_n] $ or the formal power
series ring $A[[x_1,\dots,x_n]]$ with coefficients in a ring $A$. The
ring of $A$-linear differential operators is:
$$D_{R|A}= R \left \langle \hskip 2mm \frac{1}{t!}\frac{d^t}{dx_i^t} \hskip 2mm | \hskip 2mm i=1,\dots,n; \hskip 2mm t\in \NN \right\rangle ,$$
that is, the free $R$-module generated by the differential operators $\frac{1}{t!}\frac{d^t}{dx_i^t}$.
Furthermore, if $A=K$ is a field of characteristic zero, we have
 $$D_{R|K}= R \left \langle  \frac{d}{dx_1},\dots, \frac{d}{dx_n} \right\rangle.$$
\end{example}

\begin{example}
Let $S= K[x_1,\dots,x_n]$ be a polynomial ring over a field $K$ of
characteristic zero and, given an ideal $I\subseteq S$, set $R=S/I$.
Then, using the results about differential operator of quotient rings of polynomials \cite[Theorem 5.13]{MCR}, the ring of $K$-linear
differential operators of $R$ is given by $$D_{R|K}:=
\frac{\{ \delta \in D_{S|K} \hskip 2mm | \hskip 2mm \delta (I)
\subseteq I \}}{ID_{S|K}}$$
\end{example}

It is clear that $R$ is a $D_{R|A}$ module. Several of the results presented in this manuscript concern to the
$D_{R|A}$-module structure of the localization, $R_f$ at an element $f\in R$, and the local
cohomology modules $H^i_I(R)$ associated to an ideal $I\subseteq R$.
For this reason, we recall a few properties
and definitions regarding these objects.

\vskip 2mm

$\bullet$ {\it Localization:} Let $M$ be a $D_{R|A}$- module and $f\in
R$. Then $M_f$ is also a $D_{R|A}$-module, where the action of a
differential operator $\delta\in D_{R|A}$ on $\frac{v}{f^t}\in M_f$
is defined inductively as follows: If $\delta \in D_{R|A}^0$ has
order zero then $\delta \cdot \frac{v}{f^t}=\frac{ \delta\cdot
v}{f^t}.$ Now, suppose that the action of every element in
$D_{R|A}^n$ has been defined. Let $\delta\in D^{n+1}_{R|A}.$ Then,
$$
\delta \cdot \frac{v}{f^t} = \frac{\delta\cdot v - [\delta,f^t]\cdot
\frac{v}{f^t}}{f^t},
$$
which  follows from the fact that $[\delta,f^t]\cdot \frac{v}{f^t}$
is defined. With this $D_{R|A}$-module structure on $M_f$, the
localization map $M\to M_f$ is a morphism of $D_{R|A}$-modules.

\vskip 2mm

$\bullet$ {\it Local cohomology modules:} Let $I\subseteq R$ be an ideal generated by a sequence
of elements $\underline{f}=f_1,\ldots,f_\ell\in R$,  and let $M$ be any
$R$-module. The $\check{\mbox{C}}$ech complex of $M$ with respect to
$\underline{f}$ is defined by
$$
\Cech^\bullet(\underline{f};M): \hskip 3mm 0\to M\to \bigoplus_i
M_{f_i}\to\bigoplus_{i,j} M_{f_i f_j}\to \cdots \to M_{f_1 \cdots
f_\ell} \to 0,
$$
where the maps on every summand are localization maps up to a sign.
The local cohomology of $M$ with support on $I$ is defined by
$$
H^i_I(M)=H^i(\Cech^\bullet(\underline{f};M)).
$$
One may check that it is independent of the set of generators of $I$.
It follows from this construction that every local cohomology
module over a $D_{R|A}$-module is again a $D_{R|A}$-module.

%%%%%%%%%%%%%%%%%%%%%%%%%%%%%%%%%%%%%%%%%%%%%%%%%%%%%%%%%%%%%
\section{D-modules over direct summands} \label{Dmod_summand}
%%%%%%%%%%%%%%%%%%%%%%%%%%%%%%%%%%%%%%%%%%%%%%%%%%%%%%%%%%%%%%

Let $A\subseteq R\subseteq S$ be an extension of Noetherian
rings such that $R$ is a direct summand of $S$ with splitting
morphism $\beta:S\to R$. Our first aim in this section is to relate
differential operators on $S$ to differential operators on $R$ using
the splitting $\beta$. Then we  introduce  $D_{R|A}$-modules
that are direct summands of $D_{S|A}$-modules,  for which the
differential structure is compatible. This idea is used to obtain
properties of the localizations $R_f$ and the local cohomology
modules $H^i_I(R)$ as a $D_{R|A}$-module from the properties of
$S_f$ and $H^i_{IS}(S)$ as a $D_{S|A}$-module. The same techniques
are used in order to develop a theory of Bernstein-Sato polynomials, even if $R$ is not regular.

%%%%%%%%%%%%%%%%%%%%%%%%%%%%%%%%%%%%%%%%%%%%%%%%%%%%%%%%%%%%%%
\subsection{Differentiable direct summands} \label{Diff_summand}
%%%%%%%%%%%%%%%%%%%%%%%%%%%%%%%%%%%%%%%%%%%%%%%%%%%%%%%%%%%%%%

Let $A\subseteq R\subseteq S$ be Noetherian rings such that $R$
is a direct summand of $S$ with splitting morphism $\beta:S\to R$.
We start relating differential operators in $D_{S|A}$ to
differential operators in $D_{R|A}$  using the splitting $\beta$. We first show that the order of a differential operator cannot
increase when  we compose it with the splitting $\beta$.  This idea
has been used  for rings of invariants over a group action
\cite{Schwarz}. Namely, there exists a map\footnote{$D_{S|K}^G$
denotes the ring of equivariant differential operators over the field $K$.} $D_{S|K}^G\to D_{S^G|K}. $ However, this map may not be
surjective \cite[Example 5.7]{Schwarz}.  
It is injective  for many classes of groups, in particular for finite groups, but, to the best of our knowledge, the injectivity of
this map is not known in general. 
For the study of local cohomology
modules of $S^G$ in characteristic zero as $D_{S|K}^G$-module, we
refer to \cite{PutInv}.

We point that the following lemma was implicit in Smith's proof of the statement: $S$ is a simple $D_{S|\FF_p}$-module, then $R$ is also $D_{R|\FF_p}$ is also simple \cite[Proposition 3.1]{DModFSplit}. We include this proof for the sake of completeness.

\begin{lemma}\label{LemmaRestriction}
Let $A\subseteq R\subseteq S$  be three Noetherian rings. Let
$\iota:R\to S$ denote the inclusion and let $\beta: S\to R$ be any
$R$-linear morphism. Then, for every $\delta\in D^n_{S|A}$, we have
that $\tilde{\delta}:=\beta\circ\delta_{|_{R}}\in D^n_{R|A}.$
\end{lemma}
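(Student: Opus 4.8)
The plan is to prove this by induction on the order $n$ of the differential operator $\delta$, using the inductive characterization of $D^n_{R|A}$ in terms of commutators. First I would observe that since $\iota$ is an $A$-algebra inclusion and $\beta$ is $R$-linear (hence in particular $A$-linear), the composite $\tilde\delta = \beta\circ\delta\circ\iota$ is an $A$-linear endomorphism of $R$; the content of the lemma is the bound on its order. For the base case $n=0$: if $\delta\in D^0_{S|A}$ then $\delta$ is multiplication by some $s\in S$, so for $r\in R$ we get $\tilde\delta(r) = \beta(s\cdot\iota(r)) = \beta(s)\cdot r$ by $R$-linearity of $\beta$, hence $\tilde\delta$ is multiplication by $\beta(s)\in R$, i.e. $\tilde\delta\in D^0_{R|A}$.

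For the inductive step, suppose the statement holds for operators of order at most $n-1$, and let $\delta\in D^n_{S|A}$. To show $\tilde\delta\in D^n_{R|A}$ I must check that for every $r\in R$ the commutator $[\tilde\delta, r] = \tilde\delta\circ r - r\circ\tilde\delta$ (where $r$ denotes multiplication by $r$ on $R$) lies in $D^{n-1}_{R|A}$. The key computation is to relate this commutator on $R$ to the commutator $[\delta, \iota(r)]$ on $S$, which by hypothesis lies in $D^{n-1}_{S|A}$. Explicitly, for $x\in R$,
\[
[\tilde\delta, r](x) = \beta\big(\delta(\iota(rx))\big) - r\cdot\beta\big(\delta(\iota(x))\big) = \beta\big(\delta(\iota(r)\iota(x))\big) - \beta\big(\iota(r)\cdot\delta(\iota(x))\big),
\]
where in the second term I used that $\beta$ is $R$-linear to pull $r$ inside as multiplication by $\iota(r)\in S$ (note $\iota(r)\cdot\delta(\iota(x))$ is an element of $S$ and $r\cdot\beta(\delta(\iota(x))) = \beta(\iota(r)\cdot\delta(\iota(x)))$). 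Therefore
\[
[\tilde\delta, r](x) = \beta\Big(\big(\delta\circ\iota(r)\big)(\iota(x)) - \big(\iota(r)\circ\delta\big)(\iota(x))\Big) = \beta\Big([\delta,\iota(r)]\big(\iota(x)\big)\Big),
\]
so $[\tilde\delta, r] = \beta\circ[\delta,\iota(r)]\circ\iota = \widetilde{[\delta,\iota(r)]}$. Since $[\delta,\iota(r)]\in D^{n-1}_{S|A}$, the inductive hypothesis gives $\widetilde{[\delta,\iota(r)]}\in D^{n-1}_{R|A}$, hence $[\tilde\delta, r]\in D^{n-1}_{R|A}$ for all $r\in R$, which is exactly the condition for $\tilde\delta\in D^n_{R|A}$.

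I do not expect a serious obstacle here; the argument is essentially bookkeeping with the definition of the order filtration. The one point requiring a little care is the repeated use of $R$-linearity of $\beta$ to move multiplication operators across $\beta$ (turning multiplication by $r\in R$ on the $R$-side into multiplication by $\iota(r)\in S$ on the $S$-side), and keeping straight that $\delta$ acts on $S$ while $\tilde\delta$ acts on $R$; writing $\iota$ explicitly throughout avoids confusion. It is also worth noting at the start that $\delta_{|_R}$ in the statement means $\delta\circ\iota$, i.e. the restriction of $\delta$ to the subset $\iota(R)\subseteq S$, which need not land in $R$ — that is precisely why one post-composes with $\beta$. No additional hypotheses beyond those stated are needed; in particular $R$ need not be a direct summand of $S$ for this lemma, only that $\beta$ is $R$-linear.
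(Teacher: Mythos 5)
Your proof is correct and follows essentially the same route as the paper's: induction on the order, with the base case reducing to multiplication by $\beta(s)$ and the inductive step establishing the commutator identity $[\tilde\delta, r] = \widetilde{[\delta, \iota(r)]}$. The only cosmetic difference is that you carry $\iota$ explicitly, which the paper suppresses.
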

\begin{proof}
To avoid heavy notation, we also write $\delta$ for $\delta_{|_{R}}$
We first note that $\tilde{\delta}\in \Hom_A(R,R).$ We proceed by
induction on the order $n$ of the differential operator. If $n=0,$
there exists $g\in S$ such that $\delta(w)=gw$ for  every $w\in S.$
Then,
$$
\tilde{\delta}(v)=\beta\circ\delta(v)=\beta(\delta(v))=\beta(gv)=v\beta(g)=\beta(g)
v.$$ for every $v\in R$. We now assume the claim for $n$. Let
$\delta\in D^{n+1}_{S|A}$, and $f\in R$. Then,
\begin{align*}
[\tilde{\delta},f](v)&=\tilde{\delta}(fv)-f\tilde{\delta}(v)\\
&=\beta\circ\delta(fv)-f(\beta\circ\delta(v))\\
&=\beta(\delta((fv))-\beta(f\delta(v))\\
&=\beta(\delta((fv)-f\delta(v))\\
&=\beta([\delta, f](v))\\
&=\widetilde{[\delta,f]}(v)
\end{align*}
for all $v\in R.$ Then,
$[\tilde{\delta},f]=\widetilde{[\delta,f]}\in D^n_{R|A}$ by induction
hypothesis. Then, $\tilde{\delta}\in D^{n+1}_{S|A}.$
\end{proof}

We now introduce  $D_{R|A}$-modules that are direct summands of
$D_{S|A}$-modules,  for which the differential structure is
compatible.

\begin{definition}
Let $A\subseteq R\subseteq S$ let be three Noetherian rings. Suppose
that the inclusion $R\subseteq S$ has a splitting $\beta:S\to R$.
We say that a $D_{R|A}$-module $M$, is a \emph{differential direct
summand of the $D_{S|A}$-module $N$ compatible with $\beta$}, if
$M\subseteq N $ and we have a splitting $\theta: N\to M$ of
$R$-modules such that
$$
\theta (\delta \cdot v)=(\beta\circ\delta_{|_{R}})\cdot v.
$$
for every $v\in M$ and $\delta\in D_{S|A}.$ In this case, we say that
$\theta$ is a differentiable splitting map compatible with $\beta.$
\end{definition}

\begin{lemma}\label{LemmaEqSubmod}
Let $A\subseteq R\subseteq S$  be three Noetherian rings. Suppose
that the inclusion $R\subseteq S$ has a splitting $\beta:S\to R$.
Let $M\subseteq N$ be differentiable direct summand compatible with
$\beta$. Let $V,W\subseteq M$  be $D_{R|A}$-submodules of $M$. If
$D_{S|A}V=D_{S|A}W$, then $V=W.$
\end{lemma}
\begin{proof}
We denote by $\theta:N\to M$ a differential splitting, and
$\beta\circ \delta_{|_{R}}$ by $\widetilde{\delta}$. Let $v\in V$.
Since $D_{S|A}V=D_{S|A}W,$ there exists
$\delta_1,\ldots,\delta_\ell\in D_{S|A}$ and $w_1,\ldots,w_\ell \in
W$ such that $v=\delta_1 w_1+\ldots +\delta_\ell w_\ell$. We apply
$\theta$ to both sides to obtain
$$
v  =\theta(v)
=\theta\left(\delta_1 w_1+\ldots +\delta_\ell w_\ell\right)
=\theta(\delta_1 w_1)+\ldots + \theta(\delta_\ell w_\ell)
=\widetilde{\delta_1} w_1+\ldots +\widetilde{\delta_\ell} w_\ell,
$$
where the last step follows from the fact that $\theta$ is a
differential splitting compatible with $\beta.$ Then, $v\in D_{R|A}
W=W.$ We conclude that $V\subseteq W.$ Likewise, $W\subseteq V,$
which concludes the proof.
\end{proof}

The main result of this subsection is the following upper bound for
the length, as $D$-module, of a differential direct summand.

\begin{proposition}\label{PropFinLenght}
Let $A\subseteq R\subseteq S$ let be three Noetherian rings. Suppose
that the inclusion $R\subseteq S$ has a splitting $\beta:S\to R$.
Let $M\subseteq N$ be differentiable direct summands compatible with
$\beta$. Then, $\lambda_{D_{R|A}}(M)\leq \lambda_{D_{S|A}}(N).$
\end{proposition}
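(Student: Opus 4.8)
The plan is to reduce the inequality $\lambda_{D_{R|A}}(M)\le \lambda_{D_{S|A}}(N)$ to a statement about chains of submodules, using Lemma \ref{LemmaEqSubmod} as the crucial injectivity tool. First I would observe that it suffices to treat the case $\lambda_{D_{S|A}}(N)<\infty$, since otherwise there is nothing to prove. Given a strictly increasing chain of $D_{R|A}$-submodules
$$
0=M_0\subsetneq M_1\subsetneq \cdots \subsetneq M_k=M,
$$
I would form the chain of $D_{S|A}$-submodules of $N$ obtained by extending scalars, namely $N_i:=D_{S|A}M_i$. These satisfy $0=N_0\subseteq N_1\subseteq\cdots\subseteq N_k\subseteq N$, since each $M_i\subseteq M\subseteq N$ and $M_i$ is an $R$-submodule, so $D_{S|A}M_i$ makes sense inside $N$.

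The key step is to show that this extended chain is still \emph{strictly} increasing, i.e. $N_{i}\subsetneq N_{i+1}$ whenever $M_i\subsetneq M_{i+1}$. This is exactly where Lemma \ref{LemmaEqSubmod} enters: applying it with $V=M_i$ and $W=M_{i+1}$ (both $D_{R|A}$-submodules of $M$), if we had $N_i=D_{S|A}M_i=D_{S|A}M_{i+1}=N_{i+1}$, then the lemma would force $M_i=M_{i+1}$, contradicting strictness. Hence the length of any strictly increasing chain in $M$ is bounded above by the length of a strictly increasing chain in $N$, and taking the supremum over all such chains gives $\lambda_{D_{R|A}}(M)\le\lambda_{D_{S|A}}(N)$.

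I expect the main (and essentially only) obstacle to be bookkeeping: making sure that the hypotheses of Lemma \ref{LemmaEqSubmod} are genuinely met at each stage — that is, that $M_i$ and $M_{i+1}$ really are $D_{R|A}$-submodules of $M$ (true, since they come from a chain of $D_{R|A}$-submodules) and that the compatibility of the differentiable splitting $\theta$ with $\beta$ is inherited, which is immediate because $M\subseteq N$ is assumed to be a differentiable direct summand compatible with $\beta$ in the hypothesis. One should also note at the outset that $\lambda_{D_{R|A}}(M)$ is \emph{defined} as the supremum of lengths of strictly increasing chains of $D_{R|A}$-submodules (and similarly over $S$), so that the argument above is literally a comparison of these suprema; no Jordan–Hölder input is needed, only the chain condition. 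This makes the proof short and robust, working even when the lengths are infinite.
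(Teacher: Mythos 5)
Your proof is correct and uses essentially the same approach as the paper: both arguments pass from a strictly increasing chain of $D_{R|A}$-submodules $V_i$ of $M$ to the chain $D_{S|A}V_i$ in $N$, invoking Lemma \ref{LemmaEqSubmod} to preserve strict inclusions. The only cosmetic difference is that the paper phrases it as a proof by contradiction (taking a chain of length $\lambda_{D_{S|A}}(N)+1$) while you give a direct comparison of suprema.
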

\begin{proof}
We note that if $N$ does not have finite length as $D_{S|A}$-modules
the claim is clear. We assume that $\lambda_{D_{S|A}}(N)$ is finite,
and proceed by contradiction. Let $\ell=\lambda_{D_{S|A}}(N).$ Let
$0\subsetneq V_1\subsetneq\ldots\subsetneq V_{\ell+1}$ be a strictly
increasing chain of $D_{R|A}$-submodules of $M$. By Lemma
\ref{LemmaEqSubmod} $0\subsetneq D_{S|A}V_1\subsetneq\ldots\subsetneq
D_{S|A}V_{\ell+1}$ is a strictly increasing chain of
$D_{S|A}$-submodules of $N$. Then, $\ell+1\leq \lambda_{D_{S|A}}(N),$
which is a contradiction.
\end{proof}

\begin{definition}\label{DefDiffMorphism}
Let $A\subseteq R\subseteq S$ let be three Noetherian rings. Suppose
that the inclusion $R\subseteq S$ has a splitting $\beta:S\to R$.
Given two differentiable direct summands $M_1\subseteq N_1$ and
$M_2\subseteq N_2$ with differentiable splittings $\theta_1: N_1\to
M_1$ and $\theta_2: N_2\to M_2$, we say that a map $\phi:M_1\to M_2$
is a morphism of differential direct summands if $\phi\in
\Hom_{D_{S|A}}(N_1,N_2)$, $\phi(M_1)\subseteq M_2$,
$\phi_{|_{M_1}}\in \Hom_{D_{R|A}}(M_1,M_2)$, and the following
diagram
\begin{center}
$\xymatrix{
M_1 \ar[d]^{\phi_{|_{M_1}}} \ar[r] & N_1\ar[d]^{\phi} \ar[r]^{\theta_1} & M_1\ar[d]^{\phi_{|_{M_1}}}\\
M_2 \ar[r]  & N_2\ar[r]^{\theta_2} & M_2}$
\end{center}
commutes.
For the sake of notation, we often write $\phi$ for $\phi_{|_{N_1}}.$
\end{definition}
\begin{proposition}\label{PropLoc}
Let $A\subseteq R\subseteq S$ let be three Noetherian rings. Suppose
that the inclusion $R\subseteq S$ has a splitting $\beta:S\to R$.
Let  $M$ be a $D_{R|A}$-module and $N$  be a $D_{S|A}$-module. If $M$
is a differential direct summand of $N$, then $M_f$ is a differential
direct summand of $N_f$ for every element of $f\in R.$ Furthermore,
the localization maps give a morphism of differential direct
summands.
\end{proposition}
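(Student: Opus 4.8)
The plan is to verify the three conditions in Definition~\ref{DefDiffMorphism} for the localized objects, using the explicit description of the $D$-module structure on a localization recalled in Section~\ref{Dmod}. First I would record the data: by hypothesis $M \subseteq N$ with an $R$-linear splitting $\theta \colon N \to M$ satisfying $\theta(\delta \cdot v) = \widetilde{\delta} \cdot v$ for all $v \in M$ and $\delta \in D_{S|A}$, where $\widetilde{\delta} = \beta \circ \delta_{|_R} \in D_{R|A}$ by Lemma~\ref{LemmaRestriction}. Since localization at $f \in R$ is exact, $M_f \subseteq N_f$ as $R_f$-modules, and $\theta$ localizes to an $R_f$-linear (hence $R$-linear) splitting $\theta_f \colon N_f \to M_f$, $\theta_f(v/f^t) = \theta(v)/f^t$. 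It remains to check that $\theta_f$ is a differentiable splitting compatible with $\beta$, i.e. that $\theta_f(\delta \cdot \frac{v}{f^t}) = \widetilde{\delta} \cdot \frac{v}{f^t}$ for every $\delta \in D_{S|A}$ and every $v/f^t \in M_f$.

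The key step is this last compatibility, which I would prove by induction on the order $n$ of $\delta$, mirroring the inductive definition of the action on a localization. For $n = 0$ the operator $\delta$ is multiplication by some $g \in S$; here one must be slightly careful, since the action of $\delta$ on $N_f$ multiplies by $g$ while $\widetilde{\delta}$ acts on $M_f$ as multiplication by $\beta(g) \in R$. The identity $\theta_f(g \cdot \frac{v}{f^t}) = \frac{\theta(gv)}{f^t} = \frac{\beta(gv)}{f^t} = \frac{\beta(g)v}{f^t} = \widetilde{\delta} \cdot \frac{v}{f^t}$ follows from $R$-linearity of $\beta$ together with $v \in M \subseteq R$-module and the order-zero case of the hypothesis. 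For the inductive step, take $\delta \in D^{n+1}_{S|A}$; then $[\delta, f^t] \in D^n_{S|A}$, and the defining formula gives
\[
\delta \cdot \frac{v}{f^t} = \frac{\delta \cdot v - [\delta, f^t] \cdot \frac{v}{f^t}}{f^t}.
\]
Applying $\theta_f$ and using $R_f$-linearity together with the base case (for the order-zero operator "division by $f^t$" and the hypothesis on $\theta$ for $\delta \cdot v \in N$) and the induction hypothesis (for $[\delta, f^t]$ acting on $\frac{v}{f^t} \in N_f$), I get $\theta_f(\delta \cdot \frac{v}{f^t}) = \frac{\widetilde{\delta} \cdot v - \widetilde{[\delta,f^t]} \cdot \frac{v}{f^t}}{f^t}$. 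Finally, since $\widetilde{[\delta, f^t]} = [\widetilde{\delta}, f^t]$ in $D^n_{R|A}$ — an identity extracted from the computation in the proof of Lemma~\ref{LemmaRestriction}, as $f^t \in R$ — the right-hand side is exactly the defining formula for $\widetilde{\delta} \cdot \frac{v}{f^t}$ in $M_f$, completing the induction.

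For the "furthermore" clause, I would check that the localization map $M \to M_f$, viewed together with $N \to N_f$, satisfies the four requirements of Definition~\ref{DefDiffMorphism}: the map $N \to N_f$ is a morphism of $D_{S|A}$-modules (recalled in Section~\ref{Dmod}), it sends $M$ into $M_f$, its restriction $M \to M_f$ is $D_{R|A}$-linear for the same reason applied to $R$, and the square relating $\theta$, $\theta_f$ and the two localization maps commutes because $\theta_f$ was defined precisely as the localization of $\theta$. The main obstacle is the bookkeeping in the inductive step — in particular making sure that the two "$\widetilde{(-)}$" operations (the one inside the $D_S$ localization formula and the one for the target $D_R$ module) are genuinely compatible via the commutator identity $\widetilde{[\delta, f^t]} = [\widetilde{\delta}, f^t]$; everything else is a routine localization argument.
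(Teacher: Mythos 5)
Your proof is correct and follows essentially the same route as the paper's: an induction on the order of $\delta$ using the recursive formula for the localized $D$-module action, with the base case handled via the order-zero splitting hypothesis and the inductive step closed by the commutator identity $\widetilde{[\delta,f^t]} = [\widetilde{\delta},f^t]$ extracted from the proof of Lemma~\ref{LemmaRestriction}. The only blemish is notational: in the base case you write $\theta(gv)=\beta(gv)$, which does not type-check since $gv\in N$ need not lie in $S$; the correct justification is the one you also cite, namely the order-zero instance of $\theta(\delta\cdot v)=\widetilde{\delta}\cdot v$, giving $\theta(gv)=\beta(g)v$ directly.
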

\begin{proof}
Let $\theta:N\to M$ be a differential splitting compatible with
$\beta$. For the sake of notation, we use the same symbols for
induced maps in the localization. We now show that the map $ M_f\to
N_f$ induced by $\theta$ is a differential splitting compatible
with $\beta$.

Let $\widetilde{\delta}$ denote the differential operator
$\beta\circ \delta_{|_{R}}$. We start showing that $\theta (\delta
\cdot \frac{v}{f^n})=\widetilde{\delta}\cdot \frac{v}{f^n}$  for
every element $v\in M$ and  $\delta\in D_{S|A},$ by induction on the
order of $\delta$. If $\delta$ has order zero we have, using the
action of $\delta$ as given in Section \ref{Dmod}, that
$$
\theta \left(\delta  \cdot \frac{v}{f^n}\right)= \theta
\left(\frac{\delta\cdot v}{f^n}\right)=\frac{1}{f^n}
\theta(\delta\cdot v) =\frac{1}{f^n} \theta(\delta\cdot v)
=\frac{1}{f^n}(\widetilde{\delta} \cdot v),$$ where the last
equality follows because $\theta:M\to N$ is a differentiable
splitting. Then,
\[
\theta \left(\delta  \cdot \frac{v}{f^n}\right)=
\frac{1}{f^n}(\widetilde{\delta} \cdot v)=\frac{\widetilde{\delta}
\cdot v}{f^n}= \widetilde{\delta}\cdot \frac{v}{f^n}.
\]
We now assume our claim for differential operators of order equal to
or less than $n$, and suppose that $\delta$ has order $n+1.$
\begin{align*}
\theta\left(\delta \cdot \frac{v}{f^t} \right)&=\theta\left(\frac{\delta\cdot v - [\delta,f^n]\cdot \frac{v}{f^t}}{f^t}\right)\\
&=\theta\left(\frac{\delta\cdot v }{f^t}\right)-
\theta\left(\frac{[\delta,f^n]\cdot \frac{v}{f^t}}{f^t}\right)\\
&=\frac{1}{f^t}\theta\left(\delta\cdot v \right)-
\theta\left(\frac{[\delta,f^n]\cdot \frac{v}{f^t}}{f^t}\right)\\
&=\frac{1}{f^t}\widetilde{\delta}\cdot v-
\theta\left(\frac{[\delta,f^n]\cdot \frac{v}{f^t}}{f^t}\right)\hbox{ because }\theta:M\to N\hbox{ is a differentiable spliting}\\
&=\frac{1}{f^t} \widetilde{\delta}\cdot v -
\frac{\widetilde{[\delta,f^n]}\cdot \frac{v}{f^t}}{f^t}\hbox{ by induction hypothesis as }[\delta,f^n]\in D^n_A(S);\\
&=\frac{\widetilde{\delta}\cdot v - \widetilde{[\delta,f^n]}\cdot \frac{v}{f^t}}{f^t}\\
&=\frac{\widetilde{\delta}\cdot v - \widetilde{[\delta,f^n]}\cdot \frac{v}{f^t}}{f^t}\\
&=\frac{\widetilde{\delta}\cdot v - [\widetilde{\delta},f^n]\cdot \frac{v}{f^t}}{f^t} \hbox{ as a consequence of the proof of Lemma \ref{LemmaRestriction}};\\
&=\widetilde{\delta} \cdot \frac{v}{f^t}
\end{align*}
To verify that the localization map $M\to M_f$ induces a map of
differential direct summands, we note that all the conditions in
Definition \ref{DefDiffMorphism} are satisfied by the usual
properties of localization maps, and the fact that localization maps
are $D$-module morphisms.
\end{proof}

\begin{lemma}\label{LemmaAb}
Let $A\subseteq R\subseteq S$ let be three Noetherian rings. Suppose
that the inclusion $R\subseteq S$ has a splitting $\beta:S\to R$.
Let $M_1\subseteq N_1$ and $M_2\subseteq N_2$ be differentiable
direct summands with differential splittings $\theta_1$ and
$\theta_2$. Let $\phi:N_1\to N_2$ be a map of differentiable direct
summands. Then, $\Ker(\phi_{|_{M_1}})\subseteq \Ker(\phi)$,
$\IM(\phi_{|_{M_1}})\subseteq \IM(\phi_{|_{M_1}})$ and
$\CoKer(\phi_{|_{M_1}})\subseteq \CoKer(\phi)$ are also
differentiable direct summands. Furthermore, the inclusion and
projection maps are morphism of differentiable direct summands.
\end{lemma}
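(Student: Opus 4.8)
The plan is to verify each of the three assertions (kernel, image, cokernel) separately, using the commuting diagram from Definition \ref{DefDiffMorphism} to transport the differentiable-splitting structure of $N_i$ to the relevant subquotient. Throughout write $\widetilde{\delta}=\beta\circ\delta_{|_R}$ for $\delta\in D_{S|A}$, and recall from Proposition \ref{PropLoc}-style reasoning (and the very definition of differentiable direct summand) that $\theta_i(\delta\cdot v)=\widetilde{\delta}\cdot v$ for $v\in M_i$.

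First, the \emph{kernel}. Since $\phi_{|_{M_1}}$ is the restriction of $\phi$ to $M_1\subseteq N_1$, we have $\Ker(\phi_{|_{M_1}})=\Ker(\phi)\cap M_1$, and in particular $\Ker(\phi_{|_{M_1}})\subseteq\Ker(\phi)$ with the former a $D_{R|A}$-submodule and the latter a $D_{S|A}$-submodule. It remains to produce a differentiable splitting $\Ker(\phi)\to\Ker(\phi_{|_{M_1}})$ compatible with $\beta$. The natural candidate is $\theta_1$ restricted to $\Ker(\phi)$: I must check that $\theta_1(\Ker(\phi))\subseteq\Ker(\phi_{|_{M_1}})$. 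This is exactly the commutativity of the right-hand square in Definition \ref{DefDiffMorphism}: for $w\in\Ker(\phi)$, $\phi_{|_{M_1}}(\theta_1(w))=\theta_2(\phi(w))=\theta_2(0)=0$, so $\theta_1(w)\in\Ker(\phi_{|_{M_1}})$. That $\theta_1|_{\Ker(\phi)}$ splits the inclusion $\Ker(\phi_{|_{M_1}})\hookrightarrow\Ker(\phi)$ and satisfies $\theta_1(\delta\cdot w)=\widetilde{\delta}\cdot w$ is inherited directly from the corresponding property of $\theta_1:N_1\to M_1$. The inclusion $\Ker(\phi_{|_{M_1}})\hookrightarrow\Ker(\phi)$ is then a morphism of differentiable direct summands because it is the corestriction of the morphism $M_1\hookrightarrow N_1$, so all the diagram conditions of Definition \ref{DefDiffMorphism} are inherited.

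Next, the \emph{image} and the \emph{cokernel}, which are handled by the same bookkeeping using $N_2$ in place of $N_1$. For the image, $\IM(\phi_{|_{M_1}})=\phi(M_1)\subseteq\phi(N_1)=\IM(\phi)$, and the splitting is $\theta_2|_{\IM(\phi)}:\IM(\phi)\to\IM(\phi_{|_{M_1}})$; here one needs $\theta_2(\phi(N_1))\subseteq\phi_{|_{M_1}}(M_1)$, which again is the right-hand square of the diagram: $\theta_2(\phi(n))=\phi_{|_{M_1}}(\theta_1(n))\in\phi_{|_{M_1}}(M_1)$. For the cokernel, one uses that $\theta_i$ descends: since $\theta_1(M_1)\subseteq M_1$ and $\phi$ maps $N_1$ to $N_2$ compatibly, the map $\theta_2$ induces a well-defined $R$-linear map $\CoKer(\phi)\to\CoKer(\phi_{|_{M_1}})$, because $\theta_2(\IM(\phi))\subseteq\IM(\phi_{|_{M_1}})$ by the image computation just made; the differentiable-splitting identity and the splitting-of-inclusion property pass to the quotient since all maps involved are $R$-linear and $D$-linear in the appropriate sense. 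In each case the inclusion or projection map is a morphism of differentiable direct summands by restricting/quotienting the diagram for $M_1\hookrightarrow N_1$ or $N_2\twoheadrightarrow\CoKer(\phi)$.

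The main obstacle, such as it is, is purely organizational rather than conceptual: one must keep straight which ambient $D_{S|A}$-module ($N_1$ or $N_2$) each subquotient sits inside, and verify in each of the three cases that the candidate splitting ($\theta_1$ or $\theta_2$, suitably restricted or induced) actually lands in the smaller module — and in every instance this reduces to the commutativity of the right-hand square of the diagram in Definition \ref{DefDiffMorphism} together with $R$-linearity. (I should also note the typo in the statement: the image assertion should read $\IM(\phi_{|_{M_1}})\subseteq\IM(\phi)$, not $\IM(\phi_{|_{M_1}})\subseteq\IM(\phi_{|_{M_1}})$.) No new idea beyond Lemma \ref{LemmaRestriction} and the definitions is required; the verification that the differentiable-splitting equation $\theta(\delta\cdot v)=\widetilde{\delta}\cdot v$ survives restriction to a submodule and passage to a quotient is immediate because it is an identity between $R$-linear maps that holds on the nose on the larger module.
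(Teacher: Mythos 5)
Your proof is correct and follows essentially the same route as the paper's: observe that $\phi$ and $\phi_{|_{M_1}}$ induce $D_{S|A}$- and $D_{R|A}$-module maps on kernels, images, and cokernels, and then use the commutativity of the right-hand square of the diagram in Definition \ref{DefDiffMorphism} (together with the fact that $\theta_1,\theta_2$ are differential splittings) to see that the restricted or induced $\theta_i$ gives the required compatible splitting in each of the three cases. The paper states this very tersely; you have simply unwound the same argument, and you are also right that the displayed image assertion is a typo for $\IM(\phi_{|_{M_1}})\subseteq\IM(\phi)$.
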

\begin{proof}
Since $\phi$ is a map of $D_{S|A}$ modules and $\phi_{|_{M_1}}$ is
also a map of $D_{R|A}$-modules, we note that the induced maps on
kernel, images and cokernel are also maps of $D_{S|A}$ and
$D_{R|A}$-modules respectively. The rest follows from the fact that
the diagram
\begin{center}
$\xymatrix{
M_1 \ar[d]^{\phi_{|_{M_1}}} \ar[r] & N_1\ar[d]^{\phi} \ar[r]^{\theta_1} & M_1\ar[d]^{\phi_{|_{M_1}}}\\
M_2 \ar[r]  & N_2\ar[r]^{\theta_2} & M_2}.$
\end{center}
commutes, and that $\theta_1,\theta_2$ are differential splittings.
\end{proof}

\subsection{Finite length of $D$-modules over direct summands}
Let $K$ be a field and $S$ be either $K[x_1,\ldots,x_n]$ or $K[[x_1,\ldots,x_n]]$.
 One of the
main results in the theory of $D$-modules is that both the
localizations $S_f$ and the local cohomology modules $H^i_{I}(S)$
have finite length as $D_{S|K}$-modules. If $K$ is a field of
characteristic zero, this result follows from the fact that
localizations and local cohomology modules belong to the class of
{\it holonomic} $D_{S|K}$-modules (see \cite{Bj1,Cou}).
These results can be extended to differentiably admissible $K$-algebras, which is a larger 
class or regular rings  containing the rational numbers (see \cite{MeNa,NunezDM}).  
For intance, if $(R,\m,K)$ is a ramified regular ring of mixed characteristic $p>0$, 
then $R[1/p]$ is  a differentiably admissible algebra  \cite{NunezDM}.

In positive characteristic, these results were proved by Lyubeznik using his theory of {\it $F$-finite $F$-modules} \cite{LyuFmod}. 
In fact, he showed that $S_f$ and  $H^i_{I}(S)$ have finite length as $D_{S|K}$-modules
for a larger class of Noetherian regular $K$-algebras of prime characteristic.
Indeed, $F$-finite $F$-modules behave quite likely as holonomic modules in characteristic zero.

Despite the fact that we do not have a theory of holonomic or
$F$-finite $F$-modules for direct summands, we are still able to
prove the finite length of localization and local cohomology modules in this setting.

\begin{theorem}\label{ThmFinLenLC}
Let $K$ be a field and $S$ be either  $K[x_1,\ldots,x_n]$ or $K[[x_1,\ldots,x_n]]$.
Let $K\subseteq
R\subseteq S$ be a subring with a splitting $\beta:S\to R.$ Then
$R_f$ and $H^i_I(R)$ have finite length as $D_{R|K}$-modules for
every $f\in R$ and any ideal $I\subseteq R.$ Furthermore,
$\lambda_{D_{R|K}}(R_f)\leq \lambda_{D_{S|K}}(S_f)$ and
$\lambda_{D_{R|K}}(H^i_I(R))\leq \lambda_{D_{S|K}}(H^i_{IS}(S)).$
\end{theorem}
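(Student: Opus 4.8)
The plan is to reduce the statement for $R$ to the already-known statement for $S$ by exhibiting $R_f$ and $H^i_I(R)$ as differential direct summands of $S_f$ and $H^i_{IS}(S)$ respectively, and then to invoke Proposition \ref{PropFinLenght}. First I would observe that the splitting $\beta\colon S\to R$ makes $R$ itself a differential direct summand of $S$ compatible with $\beta$: indeed $R\subseteq S$, the map $\beta$ is an $R$-linear splitting, and for $\delta\in D_{S|A}$ and $v\in R$ we have $\beta(\delta\cdot v)=(\beta\circ\delta_{|_R})\cdot v$ essentially by definition, since $\delta\cdot v$ for $v\in R\subseteq S$ is just $\delta$ applied to $v$ and the compatibility condition becomes a tautology; Lemma \ref{LemmaRestriction} guarantees that $\beta\circ\delta_{|_R}$ is genuinely a differential operator in $D^n_{R|A}$, so this is a legitimate differential direct summand structure.

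Next I would apply Proposition \ref{PropLoc} with $M=R$, $N=S$ to conclude that $R_f$ is a differential direct summand of $S_f$ compatible with $\beta$, for every $f\in R$, and moreover that the localization maps $R\to R_f$ are morphisms of differential direct summands. To handle local cohomology, I would fix generators $\underline{f}=f_1,\dots,f_\ell\in R$ of $I$ and consider the \v{C}ech complex $\Cech^\bullet(\underline{f};R)$ together with $\Cech^\bullet(\underline{f};S)$. By the previous step each term $R_{f_{i_1}\cdots f_{i_k}}$ is a differential direct summand of $S_{f_{i_1}\cdots f_{i_k}}$ and the \v{C}ech differentials, being $\pm$ localization maps, are morphisms of differential direct summands by Proposition \ref{PropLoc}. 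Hence the entire \v{C}ech complex for $R$ sits inside the one for $S$ as a complex of differential direct summands, with componentwise differentiable splittings $\theta_k\colon \Cech^k(\underline{f};S)\to\Cech^k(\underline{f};R)$ commuting with the differentials. Taking cohomology and applying Lemma \ref{LemmaAb} repeatedly (to kernels and cokernels, hence to subquotients) shows that $H^i_I(R)=H^i(\Cech^\bullet(\underline{f};R))$ is a differential direct summand of $H^i_{IS}(S)=H^i(\Cech^\bullet(\underline{f};S))$ compatible with $\beta$; here one should note $IS$ is generated by the same $\underline{f}$, so the \v{C}ech complex for $S$ does compute $H^i_{IS}(S)$.

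Finally, since $S$ is a polynomial or power series ring over the field $K$, the classical theory gives that $S_f$ and $H^i_{IS}(S)$ have finite length as $D_{S|K}$-modules --- in characteristic zero because they are holonomic (see \cite{Bj1,Cou}), and in positive characteristic by Lyubeznik's theory of $F$-finite $F$-modules \cite{LyuFmod}. Applying Proposition \ref{PropFinLenght} to the differential direct summands $R_f\subseteq S_f$ and $H^i_I(R)\subseteq H^i_{IS}(S)$ then yields both the finiteness of length over $D_{R|K}$ and the inequalities $\lambda_{D_{R|K}}(R_f)\leq\lambda_{D_{S|K}}(S_f)$ and $\lambda_{D_{R|K}}(H^i_I(R))\leq\lambda_{D_{S|K}}(H^i_{IS}(S))$.

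I expect the main obstacle to be bookkeeping rather than conceptual: namely, verifying carefully that passing to cohomology of a complex of differential direct summands again produces a differential direct summand with a splitting compatible with $\beta$. Lemma \ref{LemmaAb} handles one kernel/image/cokernel at a time, so one must argue that a subquotient of a differential direct summand is again one (intersect the two sub-objects' splittings appropriately and check compatibility), and that the splitting $\theta_\bullet$ on the \v{C}ech complex, being a chain map, descends to cohomology. Everything else is a direct invocation of the machinery already set up in Section \ref{Dmod_summand} together with the known regular-ring case.
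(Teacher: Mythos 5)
Your proposal is correct and follows essentially the same route as the paper's own proof: realize $R$ as a differential direct summand of $S$, pass to localizations via Proposition \ref{PropLoc}, pass to the \v{C}ech complex and its cohomology via Lemma \ref{LemmaAb}, and then invoke Proposition \ref{PropFinLenght} together with the classical finite-length results for $S_f$ and $H^i_{IS}(S)$. The extra care you note about applying Lemma \ref{LemmaAb} to subquotients is the same bookkeeping the paper performs implicitly; there is no substantive difference in approach.
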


\begin{proof}
Since $R\subseteq S$ is a differential splitting, we have that
$R_f\subseteq S_f$ is a differential splitting for every $f\in R$ by
Proposition \ref{PropLoc}. Then we have $\lambda_{D_{R|K}}(R_f)\leq
\lambda_{D_{S|K}}(S_f)$ by Proposition \ref{PropFinLenght}. Let
$I=(f_1,\ldots,f_\ell)\subseteq R,$ and
$\underline{f}=f_1,\ldots,f_\ell.$ We note that
$\Cech^\bullet(\underline{f};R)\subseteq
\Cech^\bullet(\underline{f};S)$ is a complex of differential direct
summands, and so, the local cohomology, $H^i_I(R)\subseteq
H^i_{IS}(S)$, is again a differential direct summand by Lemma
\ref{LemmaAb}. Then, $\lambda_{D_{R|K}}(H^i_I(R))\leq
\lambda_{D_{S|K}}(H^i_{IS}(S))$ by  Proposition \ref{PropFinLenght}.
\end{proof}

We may extend the property of finite length  to other
$D_{R|K}$-modules if we consider the category $C(R,A)$ introduced by
Lyubeznik \cite{Lyu2,LyuUMC}.% to deal with the lack of holonomic
%$D$-modules in positive and mixed characteristic.

\begin{definition}
We denote by $C(R,A)$
the smallest subcategory of $D_{R|A}$-modules that
contains $R_f$ for all $f\in R$ and that is closed under subobjects,
extensions and quotients.
\end{definition}

The category $C(R,A)$ is also closed under localization and, in some
sense, behaves as well as holonomic or F-finite F-modules. In
particular, this category contains  iterated local cohomology
modules $H^{i_1}_{I_1}\cdots H^{i_\ell}_{I_\ell}(R)$, and more
generally, Lyubeznik functors $\mathcal{T}$ defined in
\cite{LyuDMod}.

 \begin{theorem}\label{ThmFinLenC(R,K)}
Let $K$ be a field and $S$ be either  $K[x_1,\ldots,x_n]$ or $K[[x_1,\ldots,x_n]]$.
Let $K\subseteq
R\subseteq S$ let be a subring with a splitting $\beta:S\to R.$
Then, every $D_{R|K}$-module in $C(R,K)$ has finite length as
$D_{R|K}$-module.
\end{theorem}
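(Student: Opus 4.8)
The plan is to reduce the statement to the finite-length bound of Proposition \ref{PropFinLenght} by showing that \emph{every} module in $C(R,K)$ sits inside a $D_{S|K}$-module of finite length as a differential direct summand compatible with $\beta$. More precisely, I would introduce the auxiliary class $\mathcal{D}$ of $D_{R|K}$-modules $M$ that admit an embedding $M \subseteq N$ as a differential direct summand compatible with $\beta$, where $N$ is a $D_{S|K}$-module lying in $C(S,K)$ (equivalently, of finite length as a $D_{S|K}$-module, since $S$ is regular and localizations of $S$ have finite length, so $C(S,K)$ consists of finite-length modules). The goal is then to prove that $\mathcal{D}$ contains $R_f$ for all $f\in R$ and is closed under subobjects, extensions, and quotients; by minimality of $C(R,K)$ this forces $C(R,K)\subseteq \mathcal{D}$, and then Proposition \ref{PropFinLenght} gives $\lambda_{D_{R|K}}(M)\le \lambda_{D_{S|K}}(N)<\infty$ for each such $M$.

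The base case is handled by Proposition \ref{PropLoc}: for $f\in R$, the module $R\subseteq S$ is a differential direct summand compatible with $\beta$ (the splitting $\beta$ itself witnesses this, since $\beta\circ\delta_{|_R}\in D^n_{R|A}$ by Lemma \ref{LemmaRestriction} and one checks the compatibility identity directly on $R$), hence $R_f\subseteq S_f$ is a differential direct summand, and $S_f\in C(S,K)$ has finite length over $D_{S|K}$; so $R_f\in\mathcal{D}$. For closure under subobjects and quotients: given $M\subseteq N$ a differential direct summand with $N$ of finite length over $D_{S|K}$, and a $D_{R|K}$-submodule $V\subseteq M$, I would take $N':=D_{S|K}V\subseteq N$, which is a $D_{S|K}$-submodule, hence in $C(S,K)$ and of finite length; then Lemma \ref{LemmaEqSubmod} (applied with the restriction of $\theta$) shows $\theta(N')=V$, so $V$ is a differential direct summand of $N'$, giving $V\in\mathcal{D}$. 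The quotient $M/V$ then embeds into $N/N'$, which again lies in $C(S,K)$; the compatible splitting on the quotient is induced by $\theta$, and one checks via the commuting square of Lemma \ref{LemmaAb} that $M/V\hookrightarrow N/N'$ is a differential direct summand, so $M/V\in\mathcal{D}$.

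The step I expect to be the main obstacle is closure under extensions: given a short exact sequence $0\to M'\to M\to M''\to 0$ of $D_{R|K}$-modules with $M',M''\in\mathcal{D}$, witnessed by $M'\subseteq N'$ and $M''\subseteq N''$ with $N',N''\in C(S,K)$, I need to produce a finite-length $D_{S|K}$-module $N$ containing $M$ as a compatible differential direct summand. The naive candidate $N'\oplus N''$ does not obviously receive $M$; the correct approach is to form the pushout of $D_{S|K}$-modules along $M'\hookrightarrow N'$ and $M'\hookrightarrow M$, i.e. set $N := (N'\oplus M)/\{(\iota(m'),-m') : m'\in M'\}$ — but $M$ is only a $D_{R|K}$-module, so instead one should first apply $D_{S|K}\otimes_{D_{R|K}}(-)$ or, more carefully, work with $D_{S|K}M$ inside a suitable ambient module. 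The cleanest fix is: since $M''\subseteq N''$, pull back the extension $0\to N'\to E\to N''\to 0$ (or rather build $N$ as an extension of $N''$ by $N'$ in $C(S,K)$, which is closed under extensions) so that $M$ maps compatibly into $N$; one then defines $\theta:N\to M$ by combining $\theta'$ and $\theta''$ via the snake lemma / five lemma on the two exact rows, and verifies the differential compatibility $\theta(\delta\cdot v)=(\beta\circ\delta_{|_R})\cdot v$ by a diagram chase using that both $\theta'$ and $\theta''$ have this property and that the connecting maps are $D$-linear. This compatibility verification, and the check that the resulting $\theta$ is genuinely $R$-linear splitting (not just additive), is the delicate part; everything else is a direct application of Lemmas \ref{LemmaRestriction}, \ref{LemmaEqSubmod}, \ref{LemmaAb} and Propositions \ref{PropFinLenght}, \ref{PropLoc}.
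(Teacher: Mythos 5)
The paper's proof is much shorter and does not need any of the machinery you invoke past Theorem \ref{ThmFinLenLC}. Once that theorem establishes that each $R_f$ has finite length as a $D_{R|K}$-module, one simply observes that the class $\mathcal{F}$ of $D_{R|K}$-modules of finite length contains all the generators $R_f$ and is closed under subobjects, quotients, and extensions (this is elementary length theory: a submodule or quotient of a finite-length module has finite length, and an extension of two finite-length modules has finite length). Since $C(R,K)$ is by definition the \emph{smallest} subcategory containing the $R_f$ and closed under those three operations, it is contained in $\mathcal{F}$. There is no need to return to $S$, to differential direct summands, or to Proposition \ref{PropFinLenght} at this stage; that work is already encapsulated in Theorem \ref{ThmFinLenLC}.

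Your alternative route — defining the class $\mathcal{D}$ of modules that embed as compatible differential direct summands of finite-length $D_{S|K}$-modules, and trying to show $C(R,K)\subseteq\mathcal{D}$ — is both overkill and, as you suspected, genuinely incomplete at the extension step. Given $0\to M'\to M\to M''\to 0$ in $D_{R|K}$-modules with $M'\subseteq N'$ and $M''\subseteq N''$, there is no canonical way to produce a $D_{S|K}$-module $N$ containing $M$: the extension class lives in $\Ext^1_{D_{R|K}}(M'',M')$, and there is no natural map to $\Ext^1_{D_{S|K}}(N'',N')$, so ``build $N$ as an extension of $N''$ by $N'$'' does not determine a candidate. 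The pushout you first suggest cannot be formed over $D_{S|K}$ because $M$ is not a $D_{S|K}$-module, and the sketched snake/five-lemma construction of $\theta$ presupposes that the two rows fit into a single commuting diagram, which is precisely the thing you lack. In short, $\mathcal{D}$ is not obviously closed under extensions, so the minimality argument cannot be run with it. Your treatments of subobjects and quotients, on the other hand, are correct. The fix is simply to drop $\mathcal{D}$ entirely and replace it with the class of finite-length $D_{R|K}$-modules, which trivially has all three closure properties.
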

\begin{proof}
This statements follows from Theorem \ref{ThmFinLenLC}, since $R_f$
has finite length as $D_{R|K}$-module, and every object in $C(R,K)$
is build from localization modules with operations that preserve the
property of having finite length.
\end{proof}

From the previous theorem, we obtain as a corollary a result by  the
third author regarding associated primes of local cohomology.

\begin{corollary}[{\cite{NunezDS}}]
Let $K$ be a field and $S$ be either $K[x_1,\ldots,x_n]$ or $K[[x_1,\ldots,x_n]]$.
Let $K\subseteq R\subseteq S$ let be a subring with a splitting $\beta:S\to R.$
Then, $\Ass_RH^i_I(R)$ is a finite set for every ideal $I\subseteq R$ and every integer $i.$
\end{corollary}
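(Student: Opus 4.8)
The plan is to deduce this from Theorem \ref{ThmFinLenC(R,K)} by first placing $H^i_I(R)$ in the category $C(R,K)$ and then showing that any $D_{R|K}$-module of finite length has only finitely many associated primes over $R$. For the first step, write $I=(f_1,\dots,f_\ell)$ and recall that in $\Cech^\bullet(\underline f;R)$ every term is a finite direct sum of localizations $R_g$ with $g\in R$; since $C(R,K)$ contains all $R_g$ and is closed under extensions it is closed under finite direct sums, and since $H^i_I(R)$ is a subquotient of the $i$-th term of this complex it again lies in $C(R,K)$. By Theorem \ref{ThmFinLenC(R,K)} it therefore has finite length, say $\ell_0$, as a $D_{R|K}$-module; fix a composition series $0=M_0\subsetneq M_1\subsetneq\cdots\subsetneq M_{\ell_0}=H^i_I(R)$ whose quotients $M_j/M_{j-1}$ are simple $D_{R|K}$-modules.

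The heart of the argument is the claim that a simple $D_{R|K}$-module $M$ has at most one associated prime over $R$. Let $\p=\ann_R(m)\in\Ass_R M$ with $0\neq m\in M$, and write $\p=(g_1,\dots,g_s)$ using that $R$ is Noetherian. From the $\Cech$-complex description recalled in Section \ref{Dmod}, the $\p$-torsion submodule $H^0_\p(M)=\Ker\!\big(M\to\bigoplus_i M_{g_i}\big)=\bigcup_{k\ge 1}(0:_M\p^k)$ is a $D_{R|K}$-submodule of $M$ (the $\Cech$ differentials being $D_{R|K}$-linear), and it contains $m$, so it is nonzero; by simplicity $H^0_\p(M)=M$, i.e. every element of $M$ is annihilated by a power of $\p$. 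If $\q\in\Ass_R M$ is another associated prime, the same reasoning gives $H^0_\q(M)=M$, so $\q^k m=0$ for some $k\ge 1$, whence $\q^k\subseteq\ann_R(m)=\p$ and $\q\subseteq\p$ since $\p$ is prime; by symmetry $\p=\q$. Now the standard inclusion $\Ass_R M_j\subseteq\Ass_R M_{j-1}\cup\Ass_R(M_j/M_{j-1})$ applied along the composition series yields $\Ass_R H^i_I(R)\subseteq\bigcup_{j=1}^{\ell_0}\Ass_R(M_j/M_{j-1})$, a set with at most $\ell_0$ elements.

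The only point needing care is the claim on simple modules, and even there the substantive input — that $H^0_\p(M)$ is a $D_{R|K}$-submodule — is exactly the compatibility of local cohomology with the $D_{R|K}$-structure recorded in Section \ref{Dmod}; the rest is formal. I record an alternative route that bypasses the finite-length machinery: since $R$ is a direct summand of $S$, applying $\beta$ termwise makes $\Cech^\bullet(\underline f;R)$ an $R$-module retract of $\Cech^\bullet(\underline f;S)$, so $H^i_I(R)$ is an $R$-module direct summand of $H^i_{IS}(S)$ and $\Ass_R H^i_I(R)\subseteq\Ass_R H^i_{IS}(S)$. By Lyubeznik's finiteness theorem over the regular ring $S$ the set $\Ass_S H^i_{IS}(S)$ is finite, and for any $S$-module $N$ one has $\Ass_R N=\{\q\cap R:\q\in\Ass_S N\}$, because $S/\q$ is a domain that is torsion-free over $R/(\q\cap R)$ and hence has $\{\q\cap R\}$ as its unique $R$-associated prime; this forces $\Ass_R H^i_I(R)$ to be finite as well.
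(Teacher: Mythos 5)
Your main argument is correct and takes essentially the same route as the paper: both reduce to showing that a simple $D_{R|K}$-module $M$ has a single associated prime, by observing that $H^0_\p(M)$ is a nonzero $D_{R|K}$-submodule and hence equals $M$, and then running the composition-series inclusion $\Ass_R H^i_I(R)\subseteq\bigcup_j\Ass_R(M_j/M_{j-1})$. The only cosmetic difference is that the paper picks $Q$ maximal in $\Ass_R M$ so that $\Ass_R H^0_Q(M)=\{Q\}$, whereas you take an arbitrary $\p\in\Ass_R M$ and close the argument with the symmetry $\q^k\subseteq\p$, $\p^k\subseteq\q$; both work.

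One caveat on your \emph{alternative} route: you invoke the identity $\Ass_R N=\{\q\cap R:\q\in\Ass_S N\}$ for an $S$-module $N$ and justify it by noting that $S/\q$ is torsion-free over $R/(\q\cap R)$. That argument only gives the inclusion $\{\q\cap R:\q\in\Ass_S N\}\subseteq\Ass_R N$, which is the \emph{easy} direction and not the one you need. Finiteness of $\Ass_R H^i_{IS}(S)$ requires the reverse inclusion $\Ass_R N\subseteq\{\q\cap R:\q\in\Ass_S N\}$, which is proved by a different argument (localize at $\p\in\Ass_R N$, pick an associated prime of $(T^{-1}S)\cdot n$ over the Noetherian ring $T^{-1}S$ where $T=R\setminus\p$, and use that $\p T^{-1}R$ is maximal to force equality of contractions). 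The fact is standard and holds here since $S$ is Noetherian, so the alternative route is salvageable, but as written the justification addresses the wrong containment.
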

\begin{proof}
We note that a simple $D_{R|K}$-module, $M\neq 0$, has only one associated
prime. In fact, if  $Q$ is a maximal element in the set of the
associated  primes of $M$, then $\Ass_R H^0_{Q}(M)=Q$  and
$H^0_{Q}(M)$ is a nonzero $D_{R|K}$-submodule of $M$. Since
$H^i_I(R)$ has finite length, there exists an increasing sequence of
$D_{R|K}$-modules $0=M_0\subseteq M_1\subseteq \ldots \subseteq
M_{\ell}=H^i_I(R)$ such that   $M_{j+1}/M_j$ is a simple
$D_{R|K}$-module. Then, $\Ass_RH^i_I(R)\subseteq \bigcup_{j=0}^\ell
\Ass_R M_{j+1}/M_j.$
\end{proof}

\begin{remark}
There are other structural properties that local cohomology modules
satisfy over regular rings containing a field, which are proved
using $D$-modules. For instance, local cohomology modules have finite
Bass numbers and their injective dimension is bounded above by the
dimension of their support. Both of these properties fail for direct
summands. For instance, $R=K[ac,ad,bc,bd]$
is a direct summand of
$S=K[a,b,c,d]$, $H^2_{(ab,ad)(R)}$ is a local cohomology module with
an infinite socle \cite{HartCD}, so not all Bass numbers are finite.
In addition,  $H^2_{(ab,ad)(R)}$ is a simple $D_{R|K}$-module
\cite{Hsiao} supported at the maximal ideal $(ab,ad,cd,bc)R,$ which
is not an injective $R$-module, so the injective dimension can be
bigger than the dimension of the support.
\end{remark}
%%%%%%%%%%%%%%%%%%%%%%%%%%%%%%%%%%%%%%%%%%%%%%%%%%%%%%%%%%%%%%
\subsection{Bernstein-Sato polynomial over direct summands}
\label{BS-poly}
%%%%%%%%%%%%%%%%%%%%%%%%%%%%%%%%%%%%%%%%%%%%%%%%%%%%%%%%%%%%%%

Let $S$ be either the polynomial ring $K[x_1,\ldots,x_n]$ or the
formal power series ring  $K[[x_1,\ldots,x_n]]$, where $K$ is a
field of characteristic zero and $ R\subseteq S$ a direct summand.
We aim to extend the well-known theory of Bernstein-Sato over $S$ to
the direct summand $R$. We start reviewing the basic facts.

Recall that $D_{S|K}=S\left\langle \frac{d}{d{x_1}}, \ldots,
\frac{d}{d{x_n}}\right\rangle$ is the ring of $K$-linear
differential operators of $S$. Let $D_{S|K}[s]$ be the
polynomial ring in the indeterminate $s$ and coefficients in
$D_{S|K}$. For any nonzero element $f\in S$, there exists a
polynomial $b(s)\neq 0$ and a differential operator $\delta(s)\in
D_{S|K}[s]$ satisfying the following functional equation:
\begin{equation}\label{FunctionalEquation}
\delta(s)\cdot f\boldsymbol{f^s}=b(s) \boldsymbol{f^s}.
\end{equation}
This is an identity in $M[\boldsymbol{f^s}]:=S_f[s]\boldsymbol{f^s}$
which is the rank one free $S_f[s]$-module with generator
$\boldsymbol{f^s}$ viewed as a formal symbol. After
taking the specializations $M[\boldsymbol{f^s}]\to S_f,$ given by
$\boldsymbol{f^s}\mapsto f^{t}$ for $t\in\ZZ,$ we obtain the
following equation
\begin{equation}\label{EqBS}
\delta(t)\cdot f^{t+1}=b(t) f^t.
\end{equation}

The set of all polynomials $b(s)$ for which there exists an operator
$\delta(s)$ such that Equation \ref{FunctionalEquation} holds forms
an ideal of $K[s]$. The monic polynomial that generates this ideal
is called the Bernstein-Sato polynomial \cite{BernsteinPoly,SatoPoly} and is
denoted as $b_f(s)$. 

A fundamental result  states that the roots of $b_f(s)$ are negative
rational numbers in  the polynomial case\footnote{  Indeed, this result is also  stated in the analytic case ($S=\mathbb{C}\{x_1,\dots,x_n\}$).}  \cite{Malgrange2,Kashiwara}. 
As a consequence,  we may view $b_f(s)\in \QQ[s]$.

A key point in this theory is that we can give a $D_{S|K}[s]$-module
structure to $M[\boldsymbol{f^s}]$ as follows: The action of the
derivatives on a general element $g \boldsymbol{f^s}$ is given by
$$
\frac{d}{d{x_i}} \cdot g \boldsymbol{f^s}= \left(
\frac{d}{d{x_i}}\cdot  g +
 sgf^{-1}\frac{d}{d{x_i}}(f)
\right) \boldsymbol{f^s},
$$ and the multiplication by $s$ is $D_{S|K}$-linear.
Unfortunately, when we turn our attention to a non-regular ring $R$,
we are not able to define a  $D_{R|K}[s]$-module structure on
$R_f[s]\boldsymbol{f^s}$. However, it still makes sense to study the
existence of an element $\delta(s)\in D_{R|K}[s]$ and a polynomial
$b(s)\in \QQ[s]$ that satisfy Equation \ref{EqBS} for every
$t\in\ZZ.$

In order to make precise the setup that  we
introduce the following general definition that encodes all the
Bernstein-Sato data.
{
\begin{definition}\label{Definition B-S}
Let $A\subseteq R$ be two Noetherian rings, and $f\in R$. We
consider the $A[s]$-module,
$$\mathcal{B}=\left\{(\delta(s),b(s)) \in D_{R|A}[s] \times A[s] \;|\; \delta(t)\cdot
f^{t+1}=b(t) f^t\; \forall t\in\ZZ\hbox{ in }R_f\right\}.$$ Let
$\mathcal{B}_f$ be the image of $\mathcal{B}$ under the projection
map on the second entry. If $\mathcal{B}_f=0$, we say that $f$ has
no Bernstein-Sato polynomial.  If $\mathcal{B}_f\neq 0,$ we call its
monic generator  the Bernstein-Sato polynomial of $f$, which is denoted by $b^{R_A}_f(s)$.
If the rings $R$ and $A$ are clear from the context, we just write
$b_f(s).$
\end{definition}
}

Our next result shows that  Bernstein-Sato polynomials exist  for a
direct summand, $R\subseteq S$, as long as they exist for $S$.

\begin{theorem}\label{ThmGralBShyp}
Let $A\subseteq R\subseteq S$  be three Noetherian rings. Suppose
that the inclusion $R\subseteq S$ has a splitting $\beta:S\to R$.
Let $f\in R.$ If  $b^{S_A}_f(s)$ exists, then $b^{R_A}_f(s)$ also
exists. Furthermore, $b^{R_A}_f(s)$ divides $b^{S_A}_f(s)$.
\end{theorem}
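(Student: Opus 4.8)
The plan is to transport the Bernstein-Sato data from $S$ down to $R$ via the splitting $\beta$, using Lemma \ref{LemmaRestriction} as the essential tool. Suppose $b^{S_A}_f(s)$ exists, so there is a pair $(\delta(s), b(s)) \in D_{S|A}[s] \times A[s]$ with $b(s) = b^{S_A}_f(s)$ satisfying $\delta(t)\cdot f^{t+1} = b(t) f^t$ in $S_f$ for every $t \in \ZZ$. Write $\delta(s) = \sum_{j=0}^{d} \delta_j s^j$ with $\delta_j \in D_{S|A}$, and set $\widetilde{\delta_j} := \beta \circ (\delta_j)_{|_R} \in D_{R|A}$, which lies in $D_{R|A}$ of the same order by Lemma \ref{LemmaRestriction}; then put $\widetilde{\delta}(s) := \sum_{j=0}^d \widetilde{\delta_j} s^j \in D_{R|A}[s]$. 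The claim will be that $(\widetilde{\delta}(s), b(s))$ lies in the module $\mathcal{B}$ of Definition \ref{Definition B-S} for $R$, which immediately gives $b(s) \in \mathcal{B}_f$, hence $b^{R_A}_f(s)$ exists and divides $b^{S_A}_f(s) = b(s)$ (since $\mathcal{B}_f$ is an ideal of $A[s]$ and its monic generator divides any element of it — here I should be mild and note $A[s]$ need not be a PID, so "divides" should be read as: $b^{R_A}_f(s)$ generates the ideal $\mathcal{B}_f^R$ of $A[s]$, which contains $b^{S_A}_f(s)$; if $A$ is a field or $A[s]$ a PID this is literal divisibility, which is the case of interest in Theorem \ref{MainBSpoly} where $A = K$).

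The core computation is to check, for each fixed $t \in \ZZ$, the identity $\widetilde{\delta}(t) \cdot f^{t+1} = b(t) f^t$ in $R_f$. Here I apply the splitting $\theta := \beta : S_f \to R_f$, which by Proposition \ref{PropLoc} is a differentiable splitting compatible with $\beta$ on the localization $R \subseteq S$ (taking $M = R$, $N = S$). Since $f^{t+1}, f^t \in R_f$, applying $\theta$ to the $S_f$-identity $\delta(t) \cdot f^{t+1} = b(t) f^t$ gives, on the left, $\theta(\delta(t) \cdot f^{t+1}) = \sum_j t^j\, \theta(\delta_j \cdot f^{t+1}) = \sum_j t^j\, \widetilde{\delta_j} \cdot f^{t+1} = \widetilde{\delta}(t) \cdot f^{t+1}$, using the defining property $\theta(\delta \cdot v) = (\beta \circ \delta_{|_R}) \cdot v$ of a differentiable splitting and $\ZZ$-linearity (really $A[s]$-linearity) to pull the scalars $t^j$ through; and on the right, $\theta(b(t) f^t) = b(t)\, \theta(f^t) = b(t) f^t$ since $f^t \in R_f$ and $\theta$ is $R$-linear. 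Equating the two sides yields exactly the functional equation over $R_f$.

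The main obstacle — really the only subtlety — is justifying that $\theta = \beta$ genuinely acts as a differentiable splitting on the localized action, i.e. that $\theta(\delta_j \cdot v) = \widetilde{\delta_j} \cdot v$ for elements $v = f^{t+1}$ that already live in $R_f$; this is precisely what Proposition \ref{PropLoc} provides, with the slightly delicate point being that the $D$-module action on $S_f$ of $\delta_j$ evaluated at $f^{t+1}$ need not be an element of $R_f$ a priori, yet its image under $\theta$ is controlled. One must also be careful that the specialization $\boldsymbol{f^s} \mapsto f^t$ is applied \emph{before} restricting and splitting, so that the identity being transported is the already-specialized Equation \ref{EqBS} in $S_f$, not the functional equation in $S_f[s]\boldsymbol{f^s}$ (for which no $D_{R|K}[s]$-module structure is available on the $R$-side). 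Once these points are in place, assembling the pieces is routine: gather the specialized identities over all $t \in \ZZ$, observe the single pair $(\widetilde{\delta}(s), b(s))$ works uniformly, and conclude membership in $\mathcal{B}$ for $R$, whence existence and the divisibility/containment statement for $b^{R_A}_f(s)$.
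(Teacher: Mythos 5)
Your proof is correct and takes essentially the same route as the paper: restrict the operator coefficient-wise to $R$ via $\beta$ using Lemma \ref{LemmaRestriction}, then invoke the differentiable direct summand structure of $R_f \subseteq S_f$ (Proposition \ref{PropLoc}) to transfer the specialized functional equation from $S_f$ to $R_f$, so that $b^{S_A}_f(s) \in \mathcal{B}^R_f$. Your added caveat about $A[s]$ not being a PID is a fair remark about Definition \ref{Definition B-S}, but the paper's divisibility statement is intended for the case where $A$ is a field, so no harm done.
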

\begin{proof}
Let $\delta(s)\in D_{S|A}[s]$ be such that $\delta(t)\cdot
f^{t+1}=b(t)f^t$  for all $t\in\ZZ$, where the equality is taken in
$S_f$. Since $R_f$ is a differential direct summand of $S_f$, we
have that
\begin{equation}\label{EqBSDS}
\beta\circ \delta(t)_{|_{R}} \cdot f^{t+1}=b(t) f^t
\end{equation}
for every $t\in \ZZ$ in $R_f$. Then, $b^{S_A}_f(s)$ is a multiple of
$b^{R_A}_f(s)$, since this  is the monic polynomial satisfying \ref{EqBSDS}.
\end{proof}

\begin{corollary} \label{BSpoly_exists}
Let $K$ be a field and $S$ be either $K[x_1,\ldots,x_n]$ or $K[[x_1,\ldots,x_n]]$. Let $K\subseteq R\subseteq S$ be a direct summand with an
splitting $\beta:S\to R$. Then the Bernstein-Sato polynomial exists
for every element $f\in R.$ Furthermore, in the polynomial case all the roots of $b^{R_K}_f(s)$ are
negative rational numbers if $S$ is polynomial.
\end{corollary}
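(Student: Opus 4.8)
The plan is to derive both assertions from Theorem \ref{ThmGralBShyp} applied with $A=K$, whose only missing ingredient is the existence of $b^{S_K}_f(s)$, combined with the classical Bernstein--Sato theory over the regular ring $S$ recalled at the start of this subsection. First I would check that the hypotheses of Theorem \ref{ThmGralBShyp} are in force: $S$ is Noetherian, and $R$ is Noetherian because the splitting $\beta:S\to R$ forces $IS\cap R=I$ for every ideal $I\subseteq R$ (if $x=\sum a_i s_i\in IS\cap R$ with $a_i\in I$, then $x=\beta(x)=\sum a_i\beta(s_i)\in I$), so an ascending chain of ideals of $R$ embeds order-preservingly into an ascending chain of ideals of $S$ and hence stabilizes. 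Thus $K\subseteq R\subseteq S$ is a chain of Noetherian rings with $R$ a direct summand of $S$.

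Next I would establish the existence of $b^{S_K}_f(s)$. Since $K$ has characteristic zero and $S$ is a polynomial or formal power series ring, the classical Bernstein--Sato theorem (see \cite{Bj1,BernsteinPoly,SatoPoly}) yields a nonzero $b(s)\in K[s]$ and an operator $\delta(s)\in D_{S|K}[s]$ satisfying the functional equation \ref{FunctionalEquation} in $M[\boldsymbol{f^s}]$. Applying, for each $t\in\ZZ$, the specialization $M[\boldsymbol{f^s}]\to S_f$ given by $\boldsymbol{f^s}\mapsto f^t$ turns \ref{FunctionalEquation} into the identity \ref{EqBS}, so the pair $(\delta(s),b(s))$ lies in the module $\mathcal{B}$ of Definition \ref{Definition B-S} attached to $S$ and $A=K$, with $b(s)\neq 0$; hence $\mathcal{B}_f\neq 0$ and $b^{S_K}_f(s)$ is defined. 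Since the usual Bernstein--Sato polynomial $b_f(s)$ of $f$ in $S$ also satisfies \ref{FunctionalEquation}, and hence \ref{EqBS} after specialization, we obtain the divisibility $b^{S_K}_f(s)\mid b_f(s)$ in $K[s]$. Now Theorem \ref{ThmGralBShyp} gives that $b^{R_K}_f(s)$ exists and that $b^{R_K}_f(s)\mid b^{S_K}_f(s)$, which is the first assertion.

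For the statement on roots, in the polynomial case the roots of $b_f(s)$ are negative rational numbers by \cite{Malgrange2,Kashiwara}; combining this with the chain $b^{R_K}_f(s)\mid b^{S_K}_f(s)\mid b_f(s)$, every root of $b^{R_K}_f(s)$ is a root of $b_f(s)$, hence a negative rational number, and in particular $b^{R_K}_f(s)\in\QQ[s]$. I do not anticipate a genuine obstacle: all of the analytic content is imported through $S$ via the classical theory, and the only points requiring a little care are the Noetherianity of $R$, the bookkeeping of the divisibility chain $b^{R_K}_f(s)\mid b^{S_K}_f(s)\mid b_f(s)$, and --- in the formal power series case --- invoking the power-series version of Bernstein's theorem rather than the polynomial or analytic one.
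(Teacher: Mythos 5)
Your proposal is correct and follows essentially the same route as the paper: existence of the Bernstein--Sato polynomial over the regular ring $S$ (Bernstein, Sato, Björk), followed by Theorem~\ref{ThmGralBShyp} to descend existence and divisibility to the direct summand $R$, and then Malgrange--Kashiwara for the rationality and negativity of the roots in the polynomial case. The paper's proof is just a two-line version of the same argument; your version supplies two pieces of care the paper leaves implicit --- checking that $R$ is Noetherian via $IS\cap R=I$ (needed to invoke Theorem~\ref{ThmGralBShyp}), and distinguishing the specialization-defined $b^{S_K}_f(s)$ of Definition~\ref{Definition B-S} from the classical $b_f(s)$, settling for the divisibility $b^{R_K}_f(s)\mid b^{S_K}_f(s)\mid b_f(s)$ rather than equality, which is all that is needed. (In fact the first two are equal: $M[\boldsymbol{f^s}]$ is free over $S_f[s]$ and $K$ has characteristic zero, so an identity valid for all integer specializations of $s$ is valid formally; but your more conservative divisibility chain avoids having to argue this.)
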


\begin{proof}
The Bernstein-Sato polynomial $b^{S_K}_f(s)$ exists \cite{BernsteinPoly,SatoPoly}. If $S$ is polynomial,  
$b^{S_K}_f(s)$  only has negative rational roots  \cite{Malgrange2,Kashiwara}.
\end{proof}

\begin{remark}
Theorems \ref{ThmFinLenLC}, \ref{ThmFinLenC(R,K)}, and \ref{ThmGralBShyp} can be extended to direct 
summands of  differentiably admissible $K$-algebras, where $K$ is a field of characteristic zero 
(see \cite{MeNa,NunezDM}). In particular, we may also consider direct summands of $S=\mathbb{C}\{ x_1,\dots,x_n\}$,
and the rationality of the roots of the Bernstein-Sato polynomials also holds  \cite{Malgrange2,Kashiwara}.
\end{remark}

We now present an example that show that $b^R_f(s)$ could differ from $b^S_f(s).$ This example was kindly given to us by Jack Jeffries.

\begin{example}\label{ExampleDiffBS}
Let $R=\mathbb{Q}[xu,yv]$ and $S=\mathbb{Q}[x,y,u,v]$
We note that $R=S^G$, where $G$ is a torus with coordinates $\lambda_1, \lambda_2$ acting by 
$$
x\mapsto \lambda_1 \lambda_2 x,\; y\mapsto \lambda_1 \lambda_2^{-1} y,\; u\mapsto \lambda_1^{-1} \lambda_2^{-1} u, \; v\mapsto \lambda_1^{-1} \lambda_2 v.
$$
Consider $f=xu-yv$. 
There exists an isomorphism from $R$  
to a polynomial ring in two variables, which sends $f$ to a variable. Thus,
 $b^R_f(s)=s+1$ and $R_f$ is generated by $\frac{1}{f}$ as a $D_{R|K}$-module.
In contrast, $b^S_f(s)=(s+1)(s+2)$, and so, $S_f\neq D_{S|K}\cdot \frac{1}{f}$ \cite[Lemma 1.3]{WaltherBS}. 
Furthermore, $H^i_f(R)$ is a simple 
$D_{R|K}$-module, but $H_{f}^1(S)$ is not a simple $D_{S|K}$-module.
\end{example}

\begin{remark}\label{RmkGralBS}
There are several variants and extensions where the theory of
Bernstein-Sato polynomial applies. The results of Theorem
\ref{ThmGralBShyp} and Corollary \ref{BSpoly_exists} can be extended
to these cases:

\vskip 2mm

$\bullet$ {\it Bernstein-Sato polynomial associated to an algebraic
variety:} Let $I\subseteq S$ be an ideal generated by a sequence of
nonzero polynomials $\underline{f} =f_1,\ldots, f_\ell.$ Budur,
Musta\c{t}\u{a} and Saito \cite{BMSBS} defined the Bernstein-Sato polynomial,
$b_I(s)$, associated to the ideal $I$. After a convenient
shifting, $b_I(s)$ is also an invariant of the variety $V(I)$. They proved
the existence of a nonzero polynomial $b(s)\in\QQ[s]$, and
differential operators $\delta_c (s_1,\ldots,s_\ell) \in
D_{R|K}[s_1,\ldots, s_\ell]$ satisfying, in our setting, a
functional equation
\begin{equation}\label{FunctionalEquationIdeals}
\sum_{c \in \ZZ^\ell} \; \prod_{\substack{i=1,\ldots, \ell\\ c_i<0}}
\delta_c (t_1,\ldots,t_\ell) \cdot \binom{t_i}{c_i}
\prod^{\ell}_{i=1} f^{t_i+c_i}_i=b(t_1+\ldots
+t_r)\prod^{\ell}_{i=1} f^{t_i}_i,
\end{equation}  for every $t_i\in\ZZ$ and
 where the sum varies over finitely many  $c=(c_1,\dots, c_\ell)\in \ZZ^\ell$.
We think of this as a functional equation in $S_{f_1\cdots f_\ell}$.

Let $R\subseteq S$ be a direct summand with a splitting $\beta:S\to
R$ and assume that $f_i\in R$, $i=1,\dots,\ell$. Then  we obtain the
analogue of Equation \ref{FunctionalEquationIdeals} for $R$ by
composing the differential operators $\delta_c (t_1,\ldots,t_\ell)$
with the splitting $\beta$.

\vskip 2mm

$\bullet$ {\it Bernstein-Sato ideal associated to a family of
hypersurfaces:} Let  $\underline{f} =f_1,\ldots, f_\ell $ be a
sequence of elements in $S$. Then, there exists a nonzero polynomial
$b(s_1,\dots, s_\ell)\in\QQ[s_1,\dots, s_\ell]$, and differential
operators $\delta(s_1,\ldots,s_\ell) \in D_{R|K}[s_1,\dots, s_\ell]$
satisfying, in our setting, a functional equation
\begin{equation}\label{EqBS_sequence}
\delta(t_1,\ldots,t_\ell)\cdot \prod^{\ell}_{i=1} f^{t_i+1}_i=
b(t_1,\ldots,t_\ell) \prod^{\ell}_{i=1} f^{t_i}_i.
\end{equation}
for every $t_i\in\ZZ$. The ideal in $\QQ[s_1,\dots, s_\ell]$
generated by these polynomials $b(s_1,\dots, s_\ell)$ is the
Bernstein-Sato ideal associated to the sequence $\underline{f}$. If $R\subseteq
S$ is a direct summand and $f_i\in R$, $i=1,\dots,\ell$, we use the same compose by $\beta$ again to obtain the  equation in $R$.

\vskip 2mm

% $\bullet$ {\it Bernstein-Sato polynomials for modules:}
We end this subsection with a example, generously suggested by Jack Jeffries, which shows that Bernstein-Sato polynomials do not exist in general.
  
\begin{example}\label{ExampleNoBS}
Let $R=\CC[x,y,z]/(x^3+y^3+z^3)$.
We consider $R$ as a graded ring where every variable has degree $1$, and we denote the $\CC$-subspace of  homogeneous elements of degree $\ell$ by $[R]_\ell$.
We note that $D_{R|\CC}$ is also a graded ring, where a differential operator has degree $\ell\in\ZZ$ if $\delta  g=0$ or $\deg(\delta  g)=\deg(g)+\ell $ for every homogeneous element  $g\in R$. 
We note that this identity extends to any localization of $R$ at a homogeneous element.
The $\CC$-subspace of differential operators of degree $\ell$ is denoted by $D_{R|\CC}(\ell)$.
In this case, $D_{R|\CC}=\bigoplus_{\ell\in \ZZ} D_{R|\CC}(\ell)$. Furthermore, $D_{R|\CC}(\ell)=0$ for every $\ell<0$ \cite[Proposition 1]{DiffNonNoeth}. 
Let $f\in R$ be any element of positive degree $u$.
We now show by contradiction that $f$ has no Bernstein-Sato polynomial (as in Definition \ref{Definition B-S}).
Suppose that there exists $\delta[s]\in D_{R|\CC}[s] $ and $b(s)\in\CC[s]$ such that  $b(s)\neq 0$ and  $\delta(t)f^{t+1}=b(t)f^t$ for every $t\in\ZZ$.
We fix $t\in\ZZ$ such that $t>0$ and $b(t)\neq 0.$
We set $\delta =\delta(t),$ and we take $\delta_\ell\in D_{R|\CC}(\ell)$ such that
$\delta=\delta_0+\cdots+ \delta_{k}$, which is possible because there are no operators of negative degree \cite[Proposition 1]{DiffNonNoeth}.  Then,
$$
b(t)f^{t}=\delta f^{t+1}=\delta_0 f^{t+1}+\cdots+ \delta_{k} f^{t+1}\in \bigoplus_{\ell >(t+1)u} [R]_\ell .
$$
As a consequence,  $tu>(t+1)u$ for every $\ell$ appearing in the equation above, because 
$b(t)f^t$ is a nonzero homogeneous element of degree $tu$. This is a contradiction because $u>0.$
\end{example}

\begin{remark}
We note that even in the case where $D^1_{R|K}$ has no elements of negative degree, one can find elements of negative degree in $D_{R|K}$. For instance, if $R=\CC[a,b,c,d]/(ad-bc)\cong \CC[xy,xz,yu,zu],$ which is a direct summand of $S=\CC[x,y,z,u]$ with splitting $\beta:S\to R$ defined on the monomials by $\beta(x^{\alpha_1} y^{\alpha_2} z^{\alpha_3}u^{\alpha_4})=x^{\alpha_1} y^{\alpha_2} z^{\alpha_3}u^{\alpha_4}$ if $\alpha_1+\alpha_4=\alpha_2+\alpha_3$ and 
$\beta(x^{\alpha_1} y^{\alpha_2} z^{\alpha_3}u^{\alpha_4})=0$ otherwise.
Since $R$ is a graded ring with an isolated singularity, there are no elements of order $1$ and negative degree \cite{Kantor}. However, $\beta\circ \frac{\partial}{\partial x}\frac{\partial}{\partial y}$ is a differential operator on $R$ of order $2$ and negative degree.
\end{remark}

Theorem \ref{ThmGralBShyp} and Example \ref{ExampleNoBS} motivate the following question.

\begin{question}
What conditions are necessary and sufficient for a $K$-algebra for the existence of the Bernstein-Sato polynomial?
\end{question}

\end{remark}

%%%%%%%%%%%%%%%%%%%%%%%%%%%%%%%%%%%%%%%%%%%%%%%%%%%%%%%%%%%%%
\section{$F$-invariants of direct summands}

%%%%%%%%%%%%%%%%%%%%%%%%%%%%%%%%%%%%%%%%%%%%%%%%%%%%%%%%%%%%%%
In this section we study different invariants in positive
characteristic: $F$-jumping numbers and $F$-thresholds.
We point out that in a regular ring these two families of invariants are the same. However, over singular rings they
usually differ, even for direct summands. For this reason, we need to study them using different approaches.

We recall that  $R$ acquires an structure of $R$-module by restriction of scalars via the $e$-th iteration of the Frobenius map. $F^e$. We denote this module action on $R$ by $F^e_* R$. Namely, we have $r \cdot F^e_*x = F^e_*(r^{p^e} x) \in F^e_* R$ for $r \in R$ and $F^e_*x \in F^e_* R$.
%Since $R$ is $F$-finite, $F^e_* R$ is a finitely generated $R$-modules. 
If $R$ is a reduced ring, it is a common practice to identify $F^e_* R$ with $R^{1/p^e},$ via the isomorphism of $R$-modules $F^e_*r\mapsto r^{1/p^e}.$

Throughout this section we consider rings
containing a field of positive characteristic that are $F$-finite,
that is, rings such that $F^e_* R$ is a finitely generated $R$-module.

%%%%%%%%%%%%%%%%%%%%%%%%%%%%%%%%%%%%%%%%%%%%%%%%%%%%%%%%%%%%%
\subsection{Test ideals  of direct summands}
%\subsection{Cartier operators and differential operators}
%%%%%%%%%%%%%%%%%%%%%%%%%%%%%%%%%%%%%%%%%%%%%%%%%%%%%%%%%%%%%%

In this subsection we prove that the $F$-jumping numbers of an ideal
$I$ of a direct summand $R$ of a regular ring $S$ are contained in
the $F$-jumping numbers of the extended ideal $I S.$ This result
follows from comparison of rings of differential operators and
Cartier morphisms between $R$ and $S$.  Differential operators have
been used previously to show discreteness and rationality of
$F$-jumping numbers for regular rings. However, they have not been
used in the singular case as far as we know.

We point out that  Chiecchio, Enescu, Miller, and Schwede
\cite{CEMS} used the fact that $R$ is a direct summand of the symbolic
Rees  algebra $\cR=\oplus^{\infty}_{t=0} \cO_X (-t K_X ) $, to prove discreteness and rationality of
$F$-jumping numbers in certain cases, given that $\cR$ is finitely
generated as $R$-algebra. From recent results in the minimal model program  \cite{MMP1,MMP2,HC}
(see also \cite[Theorem 92]{Kol10}), one could expect that $\cR$ is finitely
generated for every strongly $F$-regular ring, in particular, for
direct summands of regular rings. However, to the best of our
knowledge, this question is still open.
Furthermore, this question relates to the longstanding problem of weakly $F$-regular versus strongly $F$-regular  \cite{CEMS}.
We point out that our strategy differs from theirs by using
differential operators over a singular ring.

\begin{definition}
Let $R$ be an $F$-finite Noetherian ring.

$\bullet$ We say that $R$ if $F$-pure if the inclusion map $R\to F^e_* R$ splits for all $e>0$.

$\bullet$ Assume that $R$ is a domain. We say that $R$ is strongly $F$-regular if for every $r\in R$
there exists $e\in\NN$ such that the $\varphi:R\to F^e_* R$
defined by $1\mapsto F^e_* r$ splits.
\end{definition}

Regular rings are strongly $F$-regular. Moreover, direct summands of
strongly $F$-regular rings are strongly $F$-regular by a result of \cite{HHStrongFreg}.

We now introduce the definition and preliminary results regarding
Cartier operators and its relation with differential operators in
positive characteristic.

\begin{definition} \label{FrobeniusCartier}
Let $R$ be an $F$-finite ring.

$\bullet$ An additive map $\psi:R\to R$ is a $p^{e}$-linear map if
$\psi(r f)=r^{p^e}\psi(f).$ Let $\mathcal{F}^e_R$ be the set of all
the $p^{e}$-linear maps. Then, we have  $\mathcal{F}^e_R \cong
\Hom_R(R, F^e_* R)$.

\vskip 2mm

$\bullet$ An additive map $\phi:R\to R$ is a $p^{-e}$-linear map if
$\phi(r^{p^e} f)=r\phi(f).$ Let $\cC^e_R$ be the set of all the
$p^{-e}$-linear maps. Then, we have  $\cC^e_R \cong
\Hom_R(F^e_* R,R)$.

\end{definition}

%\begin{remark}\label{RemCorresCartier}
%There is a bijective correspondence between $\Psi:\cC^e_R
%\to\Hom_R(F^e_* R,R)$  given by $\Psi(\phi)(r^{1/p^e})=\phi(r)$
%for $\phi\in\cC^e_R$.
%\end{remark}

%\begin{definition}
%Let $R$ be an $F$-finite Noetherian reduced ring.
%We say that $R$ if $F$-pure if the inclusion map $R\to F_* R $ splits.
%\end{definition}

%\begin{definition}
%Let $R$ be an $F$-finite Noetherian domain.
%We say that $R$ is strongly $F$-regular if for every $r\in R$
%there exists $e\in\NN$ such that the $\varphi:R\to F^e_* R$ defined by $1\mapsto r^{1/p^e}$ splits.
%\end{definition}

The main examples of $p^e$ and $p^{-e}$-linear maps are the iterated
Frobenius and Cartier morphisms. For this reason, these morphisms
are usually referred as Frobenius and Cartier operators. They have a
close relation with differential operators. Recall from Section
\ref{Dmod} that for $F$-finite rings we have $$D_R= \bigcup_e
D^{(e)}_R.
$$
Since $R$ is reduced, we have $D^{(e)}_R\cong
\Hom_R(F^e_* R,F^e_* R)$. It is natural to consider the pairing
$ \cC^e_R \otimes_R \mathcal{F}^e_R \longrightarrow D^{(e)}_R$
sending $\phi \otimes \psi$ to its composition $\psi \circ \phi$. If
$R$ is a regular ring, the previous map is an isomorphism of Abelian
groups, that is $ \cC^e_R \otimes_R \mathcal{F}^e_R \cong
D^{(e)}_R.$ One concludes, roughly speaking, that it is equivalent
to consider differential operators or just Cartier operators.

In general, for singular rings, we do not have such an isomorphism.
However, in the $F$-pure case we make an easy observation that relates differential operators
with Cartier maps. This relation is the heart of Lemma
\ref{LemmaRestrictionDmodCartier},  which is a key part of our
strategy towards Theorem \ref{MainFjump}.

\begin{remark}\label{Rem Onto}
Let $R$ be an $F$-pure $F$-finite ring.
If $\gamma:F^e_* R\to R$ is a splitting, the map
$\Psi:\Hom_R(F^e_* R,F^e_* R)\to \Hom_R(F^e_* R,R)$ defined  by $\delta\mapsto \gamma\circ\delta$ is surjective.
Then, the corresponding map
$D^{(e)}_R \to  \cC^e_R$  is also surjective.

\end{remark}

Let $S$ be a regular ring.  The so-called {\it Frobenius descent},
that follows from the isomorphism $\cC^e_S \otimes_S \mathcal{F}^e_S
\cong D^{(e)}_S$ and Morita equivalence, is a key ingredient  to
show that $S_f=D_S \cdot \frac{1}{f}$ \cite[Theorem 1.1]{AMBL}. This
is an important result regarding the $D$-module structure of the
localization $S_f$ at an element $f\in S$.  This result was extended
by Takagi and Takahashi \cite[Corollary 2.10]{TT} to the case of
graded rings with {\it finite F-representation type}.  
This class of rings include graded direct summands.
In our first
result in this section, we prove that this occurs for all  direct
summands of $F$-finite regular domains.

\begin{theorem}\label{PropLocCyclicPrime}
Let $S$ be a regular $F$-finite domain. Let $R\subseteq S$ be an
extension of Noetherian rings such that $R$ a direct summand of $S$.
Then $R_f$ is generated by $\frac{1}{f}$ as $D_R$-module.
\end{theorem}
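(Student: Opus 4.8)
The plan is to exploit the fact that $S_f = D_S \cdot \frac{1}{f}$, which holds by the theorem of \`Alvarez Montaner--Blickle--Lyubeznik \cite[Theorem 1.1]{AMBL} (Frobenius descent), together with the differential splitting machinery developed in Section \ref{Dmod_summand}. Concretely, write $\beta : S \to R$ for the $R$-linear splitting, and recall from Section \ref{Dmod} that $R$ is a differential direct summand of $S$ compatible with $\beta$; by Proposition \ref{PropLoc} this passes to localization, so $R_f \subseteq S_f$ is a differential direct summand with differentiable splitting $\theta : S_f \to R_f$ satisfying $\theta(\delta \cdot v) = (\beta \circ \delta_{|_R}) \cdot v$ for $v \in R_f$ and $\delta \in D_S$.

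First I would take an arbitrary element $\frac{g}{f^t} \in R_f$ with $g \in R$. Viewing it inside $S_f$ and using $S_f = D_S \cdot \frac{1}{f}$, write $\frac{g}{f^t} = \delta \cdot \frac{1}{f}$ for some $\delta \in D_S$. Now apply $\theta$: since $\frac{g}{f^t} \in R_f$ we have $\theta\!\left(\frac{g}{f^t}\right) = \frac{g}{f^t}$, while $\frac{1}{f} \in R_f$ as well, so $\theta\!\left(\delta \cdot \frac{1}{f}\right) = (\beta \circ \delta_{|_R}) \cdot \frac{1}{f}$. By Lemma \ref{LemmaRestriction} (applied with $A = \ZZ$, or rather with the base ring over which $D_S$ is taken — here simply $D_S = D_{S|\ZZ}$ since in prime characteristic additive maps are automatically $\ZZ/p\ZZ$-linear), the operator $\widetilde{\delta} := \beta \circ \delta_{|_R}$ lies in $D_R$. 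Therefore
\[
\frac{g}{f^t} \;=\; \theta\!\left(\delta \cdot \tfrac{1}{f}\right) \;=\; \widetilde{\delta} \cdot \tfrac{1}{f} \;\in\; D_R \cdot \tfrac{1}{f}.
\]
Since $\frac{g}{f^t}$ was arbitrary, $R_f = D_R \cdot \frac{1}{f}$, which is the claim.

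The one point that needs care — and which I expect to be the only genuine obstacle — is verifying that $R$ really is a differential direct summand of $S$ in the sense required, i.e.\ that the compatibility $\theta(\delta \cdot v) = \widetilde{\delta}\cdot v$ holds for all $\delta \in D_S$ and not merely for $\delta \in D_{S|K}$. In the characteristic-zero part of the paper one works with $D_{S|K}$, but here $D_R$ and $D_S$ are the full (non-$K$-linear) rings of differential operators, so one must check Lemma \ref{LemmaRestriction} and the localization compatibility of Proposition \ref{PropLoc} go through for $D_{S|\ZZ} = D_S$; this is immediate since those lemmas are stated for an arbitrary common subring $A$, and we may take $A = \ZZ$ (or $A = \FF_p$). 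A secondary subtlety is that $\beta$ is only assumed $R$-linear, which is exactly the hypothesis under which Lemma \ref{LemmaRestriction} is proved, so $\widetilde{\delta} \in D_R$ with no loss of order. With these observations in place the argument above is complete; everything else is a direct application of the already-established splitting formalism together with the regular-ring input $S_f = D_S \cdot \frac{1}{f}$.
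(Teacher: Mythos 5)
Your proof is correct, but it takes a genuinely different route from the paper's. You invoke the full differentiable-direct-summand machinery from Section~\ref{Diff_summand}: starting from $S_f = D_S\cdot\frac1f$, you take an arbitrary $w\in R_f$, express it as $\delta\cdot\frac1f$ in $S_f$, and then apply the differentiable splitting $\theta\colon S_f\to R_f$ (via Proposition~\ref{PropLoc} with $A=\FF_p$) to get $w=\widetilde{\delta}\cdot\frac1f\in D_R\cdot\frac1f$. The paper, by contrast, never invokes Proposition~\ref{PropLoc} here; it exploits the Frobenius-level structure instead. Since the $\delta\in D_S^{(e')}=\Hom_{S^{p^{e'}}}(S,S)$ achieving $\delta\left(\frac1f\right)=\frac1{f^{p^e}}$ is $S^{p^{e'}}$-linear, one multiplies the equation by $f^{p^{e'}}$ to clear all denominators, obtaining $\delta\bigl(f^{p^{e'}-1}\bigr)=f^{p^{e'}-p^e}$ — an equation entirely in $S$, with no localization in sight. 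Applying $\beta$ then gives $\widetilde{\delta}\bigl(f^{p^{e'}-1}\bigr)=f^{p^{e'}-p^e}$ in $R$ with $\widetilde\delta=\beta\circ\delta\in D_R^{(e')}$, and dividing back by $f^{p^{e'}}$ using $R^{p^{e'}}$-linearity of $\widetilde\delta$ recovers $\widetilde{\delta}\left(\frac1f\right)=\frac1{f^{p^e}}$; since the $\frac1{f^{p^e}}$ generate $R_f$ as an $R$-module, the claim follows. The trade-off: the paper's argument is self-contained in Section~4 and does not need the localization-compatibility of the splitting (so no induction on operator order), but it is specific to prime characteristic; your argument is conceptually cleaner and mirrors the characteristic-zero treatment, at the cost of carrying Proposition~\ref{PropLoc} (which was proved for general $A$ precisely so it would apply here — your observation that $A=\FF_p$, or $\ZZ$, is admissible is the right point to flag, and it goes through). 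One small slip: the fact that $R$ is a differentiable direct summand of $S$ with $\theta=\beta$ comes from Section~\ref{Dmod_summand}, not Section~\ref{Dmod}, though it is essentially trivial (the compatibility condition reduces to $\beta(\delta(v))=\beta(\delta(v))$ for $v\in R$).
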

\begin{proof}
Since $S_f$ is generated as $D_S$-module by $\frac{1}{f}$, for
every $e$ there exists $\delta\in D^{(e')}_S$, with $e'\geq e$, such
that $\delta (\frac{1}{f})=\frac{1}{f^{p^e}}.$ Then,  by multiplying
this equation by $f^{p^{e'}},$ we obtain
$\delta(f^{p^{e'}-1})=f^{p^{e'}-p^{e}}.$

Let  $\beta:S\to R$ denote a splitting. Let $\widetilde{\delta}=\beta\circ\delta.$
We note that $\widetilde{\delta}\in D^{(e')}_R$. In addition,
$\widetilde{\delta}(f^{p^{e'}-1})=f^{p^{e'}-p^{e}}.$
Hence, $\widetilde{\delta}(\frac{1}{f})=\frac{1}{f^{p^e}}.$ We conclude that
$R_f$ is generated as $D_R$-module by $\frac{1}{f}.$
\end{proof}

%%%%%%%%%%%%%%%%%%%%%%%%%%%%%%%%%%%%%%%%%%%%%%%%%%%%%%%%%%%%%%%%%%%
%\subsection{Test ideals  of direct summands}
%%%%%%%%%%%%%%%%%%%%%%%%%%%%%%%%%%%%%%%%%%%%%%%%%%%%%%%%%%%%%%%%%%%

Test ideals have been a fundamental tool in the theory of tight closure
developed by Hochster and the second author \cite{HoHu1,HoHu2,HoHu2}.
Hara and Yoshida \cite{H-Y} extended the theory to include
test ideals $\tau_R(I^\lambda)$ associated to pairs $(R,I^\lambda)$ where $I\subseteq R$ is an
ideal and $\lambda \in \mathbb{R}$ is a parameter.
A new approach to test ideals\footnote{Indeed, they describe the generalization to pairs of the big test
ideal considered in \cite{HoHu2}.} by means of Cartier operators was given by
Blickle, Musta\c{t}\u{a} and Smith  \cite{BMS-MMJ,BMS-Hyp}  in the case that
$R$ is a regular ring. Their approach has been extremely useful in order to extended
the theory of test ideals to non-regular rings. We refer
to  \cite{TestQGor,BB-CartierMod,BlickleP-1maps}
for a more  general setting.

\begin{definition}
Suppose that $R$ is a strongly $F$-regular ring. Let $I\subseteq R$
be an ideal, and $\lambda\in \RR_{>0}.$ The  test
ideal\footnote{Precisely, this are the big test ideals. This ideals
are equal to the original test ideals for graded rings \cite{FregEquiv} and
Gorenstein rings \cite{LyuKaren}.} of  the pair $(I,\lambda)$ is
defined by
$$
\tau_R(I^{\lambda})=\bigcup_{e\in\NN}\cC^e_R I^{\lceil p^e \lambda \rceil}.
$$
\end{definition}

We note that the chain of ideals $\{ \cC^e_R I^{\lceil p^e \lambda
\rceil}\}$   is increasing \cite[Proposition 4.4]{TT}, and so,
$\tau_R(I^{\lambda})=\cC^e_R I^{\lceil p^e \lambda \rceil}$ for
$e\gg 0$.

\vskip 2mm

We now summarize basic well-known properties of test ideals.
In the case of strongly $F$-regular rings we refer to
\cite[Lemma 4.5]{TT}. For the general case of $F$-finite rings, we refer \cite[Proposition ]{BlickleP-1maps}.

\begin{proposition}\label{PropBasics}
Let $R$ be an $F$-finite ring, $I,\b\subseteq R$ ideals, and $\lambda,\lambda'\in\RR_{>0}.$
Then,
\begin{enumerate}
\item If $I\subseteq \b,$ then $\tau_R(I^\lambda)\subseteq \tau_R(\b^{\lambda})$.
\item If $\lambda<\lambda',$ then $\tau_R(I^{\lambda'})\subseteq \tau_R(I^{\lambda})$.
\item There exists $\epsilon>0,$ such that $\tau_R(I^{\lambda})= \tau_R(I^{\lambda'})$,
if $\lambda'\in [\lambda,\lambda+\epsilon)$.
\end{enumerate}
\end{proposition}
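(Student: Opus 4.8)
The plan is to prove each part directly from the definition $\tau_R(I^\lambda)=\bigcup_{e\in\NN}\cC^e_R I^{\lceil p^e\lambda\rceil}$, using the already-quoted fact that for fixed $\lambda$ the chain $\{\cC^e_R I^{\lceil p^e\lambda\rceil}\}_e$ is increasing, so each test ideal is stabilized by its $e$-th term for $e\gg 0$. The only ring-theoretic inputs needed are that each $\cC^e_R$ is a set of additive $p^{-e}$-linear maps closed under the obvious operations, and elementary inequalities on ceilings.

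For part (1), if $I\subseteq\b$ then $I^{\lceil p^e\lambda\rceil}\subseteq \b^{\lceil p^e\lambda\rceil}$ for every $e$, hence $\cC^e_R I^{\lceil p^e\lambda\rceil}\subseteq \cC^e_R\b^{\lceil p^e\lambda\rceil}$ (applying the same $p^{-e}$-linear maps to a smaller ideal), and taking the union over $e$ gives the claim. For part (2), suppose $\lambda<\lambda'$. Since the chain defining $\tau_R(I^{\lambda'})$ is increasing, $\tau_R(I^{\lambda'})=\cC^e_R I^{\lceil p^e\lambda'\rceil}$ for all $e\gg 0$; for any such $e$ we have $\lceil p^e\lambda\rceil\le\lceil p^e\lambda'\rceil$, so $I^{\lceil p^e\lambda'\rceil}\subseteq I^{\lceil p^e\lambda\rceil}$, whence $\cC^e_R I^{\lceil p^e\lambda'\rceil}\subseteq\cC^e_R I^{\lceil p^e\lambda\rceil}\subseteq\tau_R(I^\lambda)$. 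Taking the union (equivalently, passing to large $e$) finishes it.

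Part (3) is the substantive point: I would choose $e_0$ large enough that $\tau_R(I^\lambda)=\cC^{e_0}_R I^{\lceil p^{e_0}\lambda\rceil}$, say with generators $h_1,\dots,h_r$, each $h_j=\phi_j(a_j)$ for some $\phi_j\in\cC^{e_0}_R$ and $a_j\in I^{\lceil p^{e_0}\lambda\rceil}$. Because $\lceil\cdot\rceil$ is right-continuous, there is $\epsilon>0$ with $\lceil p^{e_0}\lambda'\rceil=\lceil p^{e_0}\lambda\rceil$ for all $\lambda'\in[\lambda,\lambda+\epsilon)$; for such $\lambda'$ we then get $\tau_R(I^\lambda)=\cC^{e_0}_R I^{\lceil p^{e_0}\lambda'\rceil}\subseteq\tau_R(I^{\lambda'})$, and the reverse inclusion is part (2). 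The main obstacle — and the reason this needs the stabilization lemma rather than just the ceiling identity — is that the index $e_0$ at which the chain stabilizes depends on $\lambda'$ a priori; one must observe that once $\cC^{e_0}_R I^{\lceil p^{e_0}\lambda\rceil}$ already equals $\tau_R(I^\lambda)$, it is automatically the largest term of the (increasing, hence eventually constant) chain for $\lambda'$ as well, since that chain is sandwiched between $\tau_R(I^{\lambda'})\subseteq\tau_R(I^\lambda)$ (by part (2)) and $\cC^{e_0}_R I^{\lceil p^{e_0}\lambda'\rceil}=\cC^{e_0}_R I^{\lceil p^{e_0}\lambda\rceil}=\tau_R(I^\lambda)$. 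So the same $e_0$ works uniformly on the interval, and no further bookkeeping in $e$ is required.
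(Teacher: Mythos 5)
Parts (1) and (2) are correct; for (2) you do not even need the stabilization lemma, since $\cC^e_R I^{\lceil p^e\lambda'\rceil}\subseteq\cC^e_R I^{\lceil p^e\lambda\rceil}$ holds term-by-term for every $e$, and one can pass directly to the unions.

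Part (3) has a genuine gap, located exactly where you place the weight. The ceiling function is \emph{not} right-continuous: $\lceil\cdot\rceil$ is constant on each interval $(n,n+1]$, so it is left-continuous and jumps immediately to the right of every integer. Hence there is no $\epsilon>0$ with $\lceil p^{e_0}\lambda'\rceil=\lceil p^{e_0}\lambda\rceil$ for all $\lambda'\in[\lambda,\lambda+\epsilon)$ whenever $p^{e_0}\lambda$ is an integer; for such $e_0$ one has $\lceil p^{e_0}\lambda'\rceil=\lceil p^{e_0}\lambda\rceil+1$ for every $\lambda'>\lambda$, no matter how close. You cannot always evade this by changing $e_0$: if $\lambda=a/p^k$ is a $p$-adic rational (for instance $\lambda=1$), then $p^{e}\lambda\in\ZZ$ for all $e\geq k$, while the index at which the increasing chain $\{\cC^e_R I^{\lceil p^e\lambda\rceil}\}_e$ stabilizes may well exceed $k$. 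In that range your argument leaves the inclusion $\tau_R(I^\lambda)\subseteq\tau_R(I^{\lambda'})$ unaddressed; what is actually needed is the nontrivial fact that $\cC^{e}_R I^{\lceil p^{e}\lambda\rceil}=\cC^{e}_R I^{\lceil p^{e}\lambda\rceil+1}$ in the stable range, which is a statement about Cartier operators, not ceiling arithmetic. Note also that the paper does not prove this proposition itself but cites Takagi--Takahashi and Blickle, whose proofs handle right-continuity at $p$-adic rationals by precisely such an extra argument; your ceiling observation is not a substitute for it.
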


Therefore, every ideal $I\subseteq R$ is  associated to a family of test ideals $\tau_R(I^{\lambda})$
parameterized by real numbers $\lambda \in \mathbb{R}_{>0}$. Indeed, they form a nested chain of ideals.
%$$ R \varsupsetneq \tau_R(I^{\lambda_1}) \varsupsetneq  \tau_R(I^{\lambda_2}) \varsupsetneq \cdots $$
%and
The real numbers where the test ideals change are called {\it $F$-jumping numbers}. We now make this precise,

\begin{definition}
Let $R$ be an $F$-finite ring and let $I\subseteq R$ be an ideal. A real number $\lambda$ is an $F$-jumping number
of $I$ if
$$
\tau_R(I^{\lambda})\neq \tau_R(I^{\lambda-\epsilon})
$$
for every $\epsilon>0.$
\end{definition}

\begin{remark}
Let $R$ be a strongly $F$-regular $F$-finite ring of characteristic
$p > 0,$ and  $I\subseteq R$. If $R$ is either a local ring or a
graded $K$ algebra with $I$ homogeneous,  then the $F$-pure threshold of $I$ is defined in terms of maps $R\to F^e_* R$ that split \cite[Definition 2.1]{TW2004}. Specifically,
$$
\fpt(I):=\sup\{\lambda\;|\; \forall e\gg 0\; \exists f\in I^{\lfloor (p^e-1)\lambda\rfloor}\hbox{ such that }1\mapsto F^e_* f\hbox{ splits}\}.
$$
Then, $\fpt(I)$
is the first $F$-jumping number of $I.$
\end{remark}

From now on we consider extensions of Noetherian rings  $R\subseteq S$ such that $R$ is a direct
summand of $S$. Our aim is to relate test ideals of an ideal in $R$ with the test ideal of
its extension to $S$. The first result in this direction  shows that
the $F$-pure threshold of an ideal of $R$ is greater or
equal  than the $F$-pure threshold of its extension in $S$.

\begin{proposition}
Let $S$ be a regular $F$-finite domain. Let $R\subseteq S$ be an
extension of Noetherian rings such that $R$ is a direct summand of
$S$. Then, $\tau_S ((I S)^\lambda)\cap R\subseteq \tau_R
(I^{\lambda})$ for every ideal $I\subseteq R$ and
$\lambda\in\RR_{>0}.$ In particular, $\fpt_S(I S)\leq \fpt_R(I).$
\end{proposition}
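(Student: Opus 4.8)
The plan is to compare the Cartier-operator description of test ideals in $S$ and in $R$ via the splitting $\beta\colon S\to R$, exploiting the fact established in Theorem \ref{PropLocCyclicPrime} and Lemma \ref{LemmaRestriction} that composing a $p^{-e}$-linear map on $S$ with $\beta$ produces a $p^{-e}$-linear map on $R$. First I would fix $\lambda\in\RR_{>0}$, an ideal $I\subseteq R$, and an integer $e$ large enough that $\tau_S((IS)^\lambda)=\cC^e_S (IS)^{\lceil p^e\lambda\rceil}$ and simultaneously $\tau_R(I^\lambda)=\cC^e_R I^{\lceil p^e\lambda\rceil}$ (possible by the stabilization remark after the definition of test ideals, i.e.\ \cite[Proposition 4.4]{TT}). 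Then an element of $\tau_S((IS)^\lambda)$ lying in $R$ can be written as $\sum_j \phi_j(a_j)$ with $\phi_j\in\cC^e_S$ and $a_j\in (IS)^{\lceil p^e\lambda\rceil}$.

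The key step is the following observation: since $R$ is a direct summand of $S$ with $R$-linear splitting $\beta$, and since $\phi_j\in\cC^e_S\cong\Hom_S(F^e_*S,S)$, the composite $\widetilde{\phi_j}:=\beta\circ\phi_j|_R$ is $p^{-e}$-linear on $R$, i.e.\ $\widetilde{\phi_j}\in\cC^e_R$ — this is the Cartier-operator analogue of Lemma \ref{LemmaRestriction} (one checks $\widetilde{\phi_j}(r^{p^e}x)=\beta(\phi_j(r^{p^e}x))=\beta(r\phi_j(x))=r\,\beta(\phi_j(x))=r\widetilde{\phi_j}(x)$ using $R$-linearity of $\beta$ and the $p^{-e}$-linearity of $\phi_j$; note $r^{p^e}\in R$ so $\phi_j(r^{p^e}x)$ makes sense and $\phi_j$ is $R^{p^e}$-, hence applicable). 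Now apply $\beta$ to the equation $u=\sum_j\phi_j(a_j)$ where $u\in R$: since $\beta|_R=\id_R$ we get $u=\beta(u)=\sum_j\beta(\phi_j(a_j))=\sum_j\widetilde{\phi_j}(a_j)$.

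The remaining point is that each $a_j\in (IS)^{\lceil p^e\lambda\rceil}$ can be replaced, after applying $\beta$, by an element of $I^{\lceil p^e\lambda\rceil}$: write $a_j=\sum_k s_{jk}b_{jk}$ with $s_{jk}\in S$ and $b_{jk}\in I^{\lceil p^e\lambda\rceil}\subseteq R$. Then $\widetilde{\phi_j}(a_j)=\beta(\phi_j(\sum_k s_{jk}b_{jk}))$, and I would like to pull $b_{jk}$ out; but $b_{jk}\in R$ need not be a $p^e$-th power, so this requires a little care. The clean way is to instead observe directly that $\widetilde{\phi_j}\in\cC^e_R$ together with $a_j\in (IS)^{\lceil p^e\lambda\rceil}\cap R$ is not quite what we have; rather, the honest argument is that $\cC^e_R\cdot\big((IS)^{\lceil p^e\lambda\rceil}\cap R\big)$ need not lie in $\cC^e_R I^{\lceil p^e\lambda\rceil}$. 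So instead I would argue: the $R$-module $\cC^e_R$ acting on $I^{\lceil p^e\lambda\rceil}$, and note that since $\beta$ is $R$-linear, $\beta\big(\phi_j(a_j)\big)$ with $a_j\in (IS)^{\lceil p^e\lambda\rceil}$ lies in $\sum \beta\phi_j\big(S\cdot I^{\lceil p^e\lambda\rceil}\big)$; enlarging the index set, one reduces to showing $\beta\circ\phi\big(s\cdot b\big)$ for $\phi\in\cC^e_S$, $s\in S$, $b\in I^{\lceil p^e\lambda\rceil}$ lies in $\cC^e_R I^{\lceil p^e\lambda\rceil}$ — and the map $x\mapsto\beta(\phi(sx))$ is again in $\cC^e_R$ only if $s$ is an $R^{p^e}$-multiple, which it is not in general. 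The honest fix, which I expect to be the main obstacle, is to increase $e$: replace $e$ by $e+e'$ with $e'\gg 0$, writing $S=\bigoplus R^{p^{e'}}$-finitely-generated so that $s\in S$ becomes, after another Frobenius, a combination $\sum_l r_l^{p^{e+e'}}s_l$ with a fixed finite set of $s_l\in S$; then $\beta(\phi(sx))$ genuinely becomes an $R$-combination of maps in $\cC^{e+e'}_R$ applied to $I^{\lceil p^{e+e'}\lambda\rceil}$ (using $\lceil p^{e+e'}\lambda\rceil\le p^{e'}\lceil p^e\lambda\rceil$ and Proposition \ref{PropBasics}). This gives $u\in\cC^{e+e'}_R I^{\lceil p^{e+e'}\lambda\rceil}\subseteq\tau_R(I^\lambda)$, proving $\tau_S((IS)^\lambda)\cap R\subseteq\tau_R(I^\lambda)$.

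For the last sentence, recall $\fpt_S(IS)$ is the first $F$-jumping number of $IS$, equivalently $\fpt_S(IS)=\sup\{\lambda:\tau_S((IS)^\lambda)=S\}$, and likewise $\fpt_R(I)=\sup\{\lambda:\tau_R(I^\lambda)=R\}$. If $\lambda<\fpt_S(IS)$ then $\tau_S((IS)^\lambda)=S$, so $1\in S=\tau_S((IS)^\lambda)$ and in particular $1\in\tau_S((IS)^\lambda)\cap R\subseteq\tau_R(I^\lambda)$, forcing $\tau_R(I^\lambda)=R$ and hence $\lambda\le\fpt_R(I)$. Letting $\lambda\uparrow\fpt_S(IS)$ gives $\fpt_S(IS)\le\fpt_R(I)$.
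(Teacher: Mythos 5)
Your overall strategy is the same as the paper's: write an element $f\in\tau_S((IS)^\lambda)\cap R$ as $\sum_j\phi_j(a_j)$ with $\phi_j\in\cC^e_S$, push through $\beta$, and land in $\cC^e_R I^{\lceil p^e\lambda\rceil}$. But midway you introduce a worry that is in fact unfounded, and your proposed fix for that (non)problem has a genuine gap.

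The misconception is the claim that ``the map $x\mapsto\beta(\phi(sx))$ is again in $\cC^e_R$ only if $s$ is an $R^{p^e}$-multiple.'' This is false. The very computation you carried out for $\widetilde{\phi_j}$ works verbatim for $\psi(x):=\beta(\phi(sx))$ with $s\in S$ arbitrary: for $r,x\in R$,
\[
\psi(r^{p^e}x)=\beta\bigl(\phi(s\,r^{p^e}x)\bigr)=\beta\bigl(\phi(r^{p^e}(sx))\bigr)=\beta\bigl(r\,\phi(sx)\bigr)=r\,\psi(x),
\]
using $p^{-e}$-linearity of $\phi$ on $S$ and $R$-linearity of $\beta$. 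Equivalently, $\cC^e_S\cong\Hom_S(F^e_*S,S)$ carries an $S$-module structure $(s\cdot\phi)(x)=\phi(sx)$, so $s\cdot\phi\in\cC^e_S$ already, and $\beta\circ(s\cdot\phi)|_R\in\cC^e_R$ by the same argument as for $\phi$ itself. Since $(IS)^{\lceil p^e\lambda\rceil}=I^{\lceil p^e\lambda\rceil}S$, writing $a_j=\sum_k s_{jk}b_{jk}$ with $b_{jk}\in I^{\lceil p^e\lambda\rceil}\subseteq R$, one gets $\beta(\phi_j(a_j))=\sum_k\psi_{jk}(b_{jk})$ with $\psi_{jk}\in\cC^e_R$, and the proof closes with the same $e$ --- no increase of $e$ is needed. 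This is exactly what the paper does (after re-indexing, the $g_i$ in its proof already live in $I^{\lceil p^e\lambda\rceil}$).

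The fix you proposed to route around the (non)problem is where the genuine gap lies: you assume $S$ is finitely generated over $R^{p^{e'}}$ so that $s\in S$ can be written as $\sum_l r_l^{p^{e+e'}}s_l$ with finitely many fixed $s_l$. That finiteness fails whenever the extension $R\subseteq S$ is not module-finite (e.g.\ $R$ a non-trivial direct summand of a much larger polynomial ring), and the paper explicitly emphasizes that the interesting case is precisely when $R\to S$ is not finite. So as written your argument does not establish the proposition in the intended generality; it would be correct only after deleting the detour and using the observation that $\beta\circ(s\cdot\phi)|_R\in\cC^e_R$ for all $s\in S$.

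The last paragraph (the deduction $\fpt_S(IS)\le\fpt_R(I)$ from the containment of test ideals) is correct and matches the intended conclusion.
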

\begin{proof}
Let $f\in \tau_S ((I S)^\lambda)\cap R.$ Then, there exists
$e\in\NN$ such  that $f\in \cC^e_S (I S)^{\lceil p^e \lambda
\rceil}.$ Then, there exist $g_1,\ldots,g_\ell\in I^{\lceil p^e
\lambda \rceil}$ and $\phi_1,\ldots,\phi_\ell\in C^S_e$ such that
$\phi_1(g_1)+\ldots+\phi_\ell (g_\ell)=f$.

Let  $\beta:S\to R$ denote a splitting.
Then, $\beta\circ {\phi_i}_{|_R}\in \cC^e_R.$
Thus, $\beta (\phi_i(g_i))\in \cC^e_R I^{\lceil p^e \lambda \rceil}\subseteq \tau_R (I^{\lambda})$.
Then,
\begin{align*}
f&=\phi_1(g_1)+\ldots+\phi_\ell (g_\ell);\\
&=\beta(\phi_1(g_1)+\ldots+\phi_\ell (g_\ell));\\
&=\beta(\phi_1 (g_1))+\ldots + \beta(\phi_\ell (g_\ell)).
\end{align*}
Hence, $f\in \cC^e_R (I^{\lceil p^e \lambda \rceil})\subseteq \tau_R (I^{\lambda}).$
\end{proof}

The following results are  a key part of our
strategy towards Theorem \ref{MainFjump}.

\begin{lemma}\label{LemmaRestrictionDmodCartier}
Let $R$ be an $F$-pure $F$-finite ring, and $I,\b\subseteq R$ ideals.
If $D^{(e)}_R I=D^{(e)}_R\b$, then $\cC^e_R I=\cC^e_R \b$.
\end{lemma}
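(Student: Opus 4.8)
The plan is to reduce everything to Remark \ref{Rem Onto}, which is where $F$-purity enters: since $R$ is $F$-pure and $F$-finite, a splitting $\gamma\colon F^e_* R \to R$ of the $e$-th Frobenius exists, and Remark \ref{Rem Onto} says the induced map $D^{(e)}_R \cong \Hom_R(F^e_* R, F^e_* R)\to \mathcal{C}^e_R\cong\Hom_R(F^e_* R, R)$, $\delta\mapsto \gamma\circ\delta$, is surjective. By symmetry in $I$ and $J$ it suffices to prove $\mathcal{C}^e_R I\subseteq \mathcal{C}^e_R J$.

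First I would record the elementary bookkeeping. Because $D^{(e)}_R$ is stable under (left) multiplication by elements of $R$, every element of the ideal $D^{(e)}_R I$ is a finite sum $\sum_i\delta_i(g_i)$ with $\delta_i\in D^{(e)}_R$ and $g_i\in I$ (ring coefficients can be absorbed into the operators); the same holds for $\mathcal{C}^e_R I$ with $\mathcal{C}^e_R$ in place of $D^{(e)}_R$. Moreover, for $\delta\in D^{(e)}_R$ and $\phi\in\mathcal{C}^e_R$ the composite $\phi\circ\delta$ lies in $\mathcal{C}^e_R$, as it is the composition of the $R$-linear maps $F^e_* R\xrightarrow{\delta}F^e_* R\xrightarrow{\phi}R$.

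Now take $a\in\mathcal{C}^e_R I$ and write $a=\sum_j\phi_j(g_j)$ with $\phi_j\in\mathcal{C}^e_R$ and $g_j\in I$. Using Remark \ref{Rem Onto}, write $\phi_j=\gamma\circ\delta_j$ with $\delta_j\in D^{(e)}_R$. Since $\gamma$ is additive,
$$a=\sum_j\gamma(\delta_j(g_j))=\gamma\Big(\sum_j\delta_j(g_j)\Big)=\gamma(x),\qquad x:=\sum_j\delta_j(g_j)\in D^{(e)}_R I.$$
By hypothesis $x\in D^{(e)}_R I=D^{(e)}_R J$, so $x=\sum_k\delta'_k(h_k)$ with $\delta'_k\in D^{(e)}_R$ and $h_k\in J$. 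Applying $\gamma$ and using additivity again,
$$a=\gamma(x)=\sum_k\gamma(\delta'_k(h_k))=\sum_k(\gamma\circ\delta'_k)(h_k)\in\mathcal{C}^e_R J,$$
because each $\gamma\circ\delta'_k\in\mathcal{C}^e_R$. This gives $\mathcal{C}^e_R I\subseteq\mathcal{C}^e_R J$, and exchanging $I$ and $J$ yields equality.

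The argument is short, so there is no serious obstacle; the only non-formal input is the surjectivity $D^{(e)}_R\twoheadrightarrow\mathcal{C}^e_R$ of Remark \ref{Rem Onto}, which is exactly what $F$-purity buys us (and which can fail for general $F$-finite reduced rings, where $\mathcal{C}^e_R$ need not be the image of differential operators). Everything else is the observation that post-composing a level-$e$ differential operator with a level-$e$ Cartier operator is again a level-$e$ Cartier operator, together with the fact that $D^{(e)}_R I$ and $\mathcal{C}^e_R I$ are genuine ideals closed under the respective operator actions.
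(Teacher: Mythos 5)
Your proof is correct and uses the same essential idea as the paper: the surjection $D^{(e)}_R\twoheadrightarrow\cC^e_R$ via post-composition with the splitting $\gamma$ (Remark \ref{Rem Onto}), combined with the hypothesis $D^{(e)}_R I = D^{(e)}_R J$. The paper phrases this as a chain of set-level equalities $\cC^e_R I = \gamma(D^{(e)}_R \cdot F^e_* I) = \gamma(D^{(e)}_R\cdot F^e_* J) = \cC^e_R J$, while you unpack it element-by-element; the content is identical.
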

\begin{proof}
We have that
\begin{align*}
\cC^e_R I & =\Hom_R(F^e_* R,R)\cdot F^e_* I\hbox{ by Definition \ref{FrobeniusCartier}}\\
& =\gamma(\Hom_R(F^e_* R,F^e_* R)\cdot F^e_* I)\hbox{ by Remark \ref{Rem Onto}}\\
& =\gamma(\Hom_R(F^e_* R,F^e_* R)\cdot F^e_* \b)\hbox{ by Definition \ref{FrobeniusCartier} and }D^{(e)}_R I=D^{(e)}_R\b;\\
& =\Hom_R(F^e_* R,R)\cdot F^e_* \b\hbox{ by Remark \ref{Rem Onto}}\\
& =\cC^e_R \b\hbox{ by Definition \ref{FrobeniusCartier}}
\end{align*}
\end{proof}

\begin{lemma}\label{LemmaEqCartier}

Let $R\subseteq S$ be an extension of Noetherian rings such that $R$
is a direct summand of $S$. Let $I\subseteq R$ denote an ideal. If
$\cC^e_S (I S)^r =\cC^e_S (I S)^t$ for $r,t\in\NN,$ then $\cC^e_R
I^r=\cC^e_R I^t$.
\end{lemma}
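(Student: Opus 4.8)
The plan is to reduce the statement for $R$ to the hypothesis for $S$ by comparing rings of differential operators on the two rings, using the splitting $\beta:S\to R$ together with Lemma \ref{LemmaRestrictionDmodCartier}. First I would observe that, since $S$ is regular and $F$-finite, the Cartier operators and differential operators on $S$ are interchangeable: $\cC^e_S \otimes_S \mathcal F^e_S \cong D^{(e)}_S$, so the hypothesis $\cC^e_S (IS)^r = \cC^e_S (IS)^t$ immediately upgrades to $D^{(e)}_S (IS)^r = D^{(e)}_S (IS)^t$ (applying $\mathcal F^e_S$ to both sides and using that $D^{(e)}_S = \mathcal F^e_S \cdot \cC^e_S$). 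In fact it is cleanest to run the whole argument on the level of $D^{(e)}$-modules and only invoke Cartier operators at the very end, since Lemma \ref{LemmaRestrictionDmodCartier} already packages the passage back from $D^{(e)}_R$ to $\cC^e_R$.

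Next I would transfer the equality of $D^{(e)}_S$-modules down to $R$. Given $x \in D^{(e)}_R I^r$, write $x = \sum_j \delta_j (g_j)$ with $\delta_j \in D^{(e)}_R \subseteq D^{(e)}_S$ and $g_j \in I^r \subseteq (IS)^r$; then $x \in D^{(e)}_S (IS)^r = D^{(e)}_S (IS)^t$, so $x = \sum_k \varepsilon_k(h_k)$ with $\varepsilon_k \in D^{(e)}_S$ and $h_k \in (IS)^t$. Each $h_k$ is an $S$-combination of elements of $I^t$, say $h_k = \sum_m s_{km} a_{km}$ with $a_{km} \in I^t$; absorbing the multiplication by $s_{km}$ (an order-zero operator) into $\varepsilon_k$, we may assume each $h_k \in I^t \subseteq R$. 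Now apply the splitting: since $x \in R$ we have $x = \beta(x) = \sum_k \beta(\varepsilon_k(h_k)) = \sum_k (\beta \circ (\varepsilon_k)_{|_R})(h_k)$, and by Lemma \ref{LemmaRestriction} each $\widetilde{\varepsilon_k} := \beta \circ (\varepsilon_k)_{|_R}$ lies in $D^{(e)}_R$ (Lemma \ref{LemmaRestriction} is stated for the order filtration $D^n$, but the same one-step induction applies verbatim to the filtration $D^{(e)} = \Hom_{R^{p^e}}(R,R)$ since $\beta$ is $R$-linear hence $R^{p^e}$-linear). Therefore $x \in D^{(e)}_R I^t$, and by symmetry $D^{(e)}_R I^r = D^{(e)}_R I^t$.

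Finally I would feed this equality into Lemma \ref{LemmaRestrictionDmodCartier}. For that I need $R$ to be $F$-pure, which holds because $R$ is a direct summand of $S$ and $S$ is $F$-finite and regular hence $F$-pure: the splitting of $R \hookrightarrow F^e_* R$ is obtained by composing a splitting of $S \hookrightarrow F^e_* S$ with $\beta$ on the appropriate side. Applying Lemma \ref{LemmaRestrictionDmodCartier} with the ideals $I^r$ and $I^t$ then yields $\cC^e_R I^r = \cC^e_R I^t$, as desired.

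The main obstacle I anticipate is purely bookkeeping rather than conceptual: making sure the reductions "absorb $S$-coefficients into the operator" and "restrict $D^{(e)}_S$ operators to $R$ and post-compose with $\beta$" genuinely land inside $D^{(e)}_R$ and inside the correct power of $I$ over $R$ (not just over $S$). The key enabling facts are that $\beta$ is $R$-linear, that $D^{(e)}$ is closed under composition with order-zero operators, and the $F$-finite regular case $D^{(e)}_S = \mathcal F^e_S \cdot \cC^e_S$; none of these is hard, but the argument must thread them carefully so that the final set inclusions are over $R$ throughout.
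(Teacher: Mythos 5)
Your overall approach matches the paper's exactly: upgrade the Cartier hypothesis on $S$ to an equality of $D^{(e)}_S$-modules via Frobenius descent, push the resulting expression down to $R$ by post-composing with the splitting $\beta$, absorb $S$-coefficients as order-zero operators, and finish by invoking Lemma~\ref{LemmaRestrictionDmodCartier}. Your treatment of the coefficient absorption and the $F$-purity of $R$ is actually more explicit than the paper's, and your one-step check that $\beta\circ(\cdot)_{|_R}$ carries $D^{(e)}_S$ into $D^{(e)}_R$ is correct.

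There is, however, one genuine error in the very first reduction: you write ``$\delta_j\in D^{(e)}_R\subseteq D^{(e)}_S$,'' but this inclusion does not exist. An element of $D^{(e)}_R=\Hom_{R^{p^e}}(R,R)$ is an endomorphism of $R$, and there is no canonical way to extend it to an $S^{p^e}$-linear endomorphism of $S$; the obvious attempts (extend by zero on a complement, etc.) fail $S^{p^e}$-linearity. Consequently the deduction ``$x\in D^{(e)}_R I^r\Rightarrow x\in D^{(e)}_S(IS)^r$'' is unjustified as stated. The fix is to do what the paper does: start from $x\in I^r$ rather than $x\in D^{(e)}_R I^r$. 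Then $x\in(IS)^r\subseteq D^{(e)}_S(IS)^r=D^{(e)}_S(IS)^t$ trivially, your $\beta$-pushdown gives $I^r\subseteq D^{(e)}_R I^t$, and since $D^{(e)}_R$ is closed under composition one concludes $D^{(e)}_R I^r\subseteq D^{(e)}_R\bigl(D^{(e)}_R I^t\bigr)=D^{(e)}_R I^t$. With this adjustment your argument is correct and coincides with the paper's proof.
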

\begin{proof}
We assume without loss of generality that $r\leq t.$
We have $\cC^e_R I^t\subseteq \cC^e_R I^r$, thus we focus on the other containment.
Since $\cC^e_S (I S)^r =\cC^e_S (I S)^t$, we have that
$$D^{(e)}_S (I S)^r =\left(\cC^e_S (I S)^r\right)^{[p^e]} = \left(\cC^e_S (I S)^t\right)^{[p^e]}=D^{(e)}_S (I S)^t.$$
Then,
$I^r\subseteq D^{(e)}_S (I S)^t.$
Let $f\in I^r.$
Then, there exists $\phi_1,\ldots,\phi_\ell\in D^{(e)}_S$ and $g_1,\ldots,g_\ell\in I^t$ such that
$f=\phi_1 (g_1)+\ldots +\phi_\ell(g_\ell).$
Let $\beta:S\to R$ denote a splitting.
Then, $\beta\circ {\phi_i}_{|_R}\in D^{(e)}_R $
and $\beta (\phi_i (g_i))\in D^{(e)}_R I^t$.
It follows that $f\in D^{(e)}_R (I^t)$ since
\begin{align*}
f&=\phi_1(g_1)+\ldots+\phi_\ell (g_\ell)\\
&=\beta(\phi_1(g_1)+\ldots+\phi_\ell (g_\ell))\\
&=\beta(\phi_1 (g_1)+\ldots + \beta(\phi_\ell (g_\ell)\\
&=\beta(\phi_1 (g_1))+\ldots + \beta(\phi_\ell (g_\ell)).
\end{align*}
 Therefore, $I^r\subseteq
D^{(e)}_R I^t.$ Hence, $D^{(e)}_R I^r\subseteq D^{(e)}_R \left(
D^{(e)}_R I^t\right)=D^{(e)}_R I^t.$ We conclude that $D^{(e)}_R
I^r=D^{(e)}_R I^t$, and so,  $\cC^e_R I^r=\cC^e_R I^t$ by Lemma
\ref{LemmaRestrictionDmodCartier}.
\end{proof}

We are now ready to prove the main result in this section.

\begin{theorem}\label{ThmEqTestIdeals}
Let $S$ be a regular $F$-finite domain. Let $R\subseteq S$ be an
extension of Noetherian rings such that $R$ is a direct summand of
$S$. Let $I\subseteq R$ denote an ideal, and
$\lambda_1,\lambda_2\in\RR_{>0}$. If $\tau_S((I
S)^{\lambda_1})=\tau_S((I S)^{\lambda_2}),$ then  $\tau_R
(I^{\lambda_1})=\tau_R (I^{\lambda_2})$.
\end{theorem}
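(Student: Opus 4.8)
The plan is to reduce the statement about test ideals to the statements about Cartier-operator ideals that were just established in Lemma \ref{LemmaEqCartier}. Recall that for a strongly $F$-regular ring the test ideal stabilizes: $\tau_R(I^\lambda)=\cC^e_R I^{\lceil p^e\lambda\rceil}$ for all $e\gg 0$, and similarly $\tau_S((IS)^\lambda)=\cC^e_S (IS)^{\lceil p^e\lambda\rceil}$ for all $e\gg 0$. So the first step is to choose a single $e$ large enough that stabilization holds simultaneously for $\lambda_1$ and $\lambda_2$ over both $R$ and $S$. Here one has to be a little careful, because the exponents $\lceil p^e\lambda_1\rceil$ and $\lceil p^e\lambda_2\rceil$ move as $e$ grows; the point is that for each fixed $e$ one still has $\tau_S((IS)^{\lambda_i})=\cC^e_S(IS)^{\lceil p^e\lambda_i\rceil}$ once $e$ is past the stabilization threshold, so one just takes $e$ past the max of the four thresholds.

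With such an $e$ fixed, the hypothesis $\tau_S((IS)^{\lambda_1})=\tau_S((IS)^{\lambda_2})$ translates into
$$
\cC^e_S (IS)^{\lceil p^e\lambda_1\rceil}=\cC^e_S (IS)^{\lceil p^e\lambda_2\rceil}.
$$
Now apply Lemma \ref{LemmaEqCartier} with $r=\lceil p^e\lambda_1\rceil$ and $t=\lceil p^e\lambda_2\rceil$ (note $R$ is a direct summand of the regular, hence strongly $F$-regular, ring $S$, so $R$ is strongly $F$-regular and in particular $F$-pure, which is what Lemma \ref{LemmaEqCartier} needs via Lemma \ref{LemmaRestrictionDmodCartier}). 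This yields
$$
\cC^e_R I^{\lceil p^e\lambda_1\rceil}=\cC^e_R I^{\lceil p^e\lambda_2\rceil}.
$$
By the stabilization property over $R$, the left-hand side equals $\tau_R(I^{\lambda_1})$ and the right-hand side equals $\tau_R(I^{\lambda_2})$, which is the desired conclusion.

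The only genuinely delicate point — the main obstacle — is the bookkeeping with the stabilization threshold: one must verify that a single $e$ can be chosen so that all four equalities $\tau(\cdot)=\cC^e(\cdot)^{\lceil p^e(\cdot)\rceil}$ hold at once, and that Lemma \ref{LemmaEqCartier} is being invoked with the exponents corresponding to that same $e$. Everything else is a direct substitution. I would also remark that the argument shows more: it gives the containment of $F$-jumping sets claimed in Theorem \ref{MainFjump}, since if $\lambda$ is not an $F$-jumping number of $IS$ then $\tau_S((IS)^\lambda)=\tau_S((IS)^{\lambda-\epsilon})$ for some $\epsilon>0$, whence $\tau_R(I^\lambda)=\tau_R(I^{\lambda-\epsilon})$ by the theorem, so $\lambda$ is not an $F$-jumping number of $I$ either; discreteness and rationality over $R$ then follow from the corresponding facts over the regular ring $S$.
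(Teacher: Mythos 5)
Your proposal is correct and is essentially the paper's own argument: both reduce the test-ideal statement to the Cartier-ideal equality via Lemma \ref{LemmaEqCartier}, after first choosing $e$ past the stabilization threshold for the two $\tau_S$-expressions. The only cosmetic difference is that the paper concludes by writing $\tau_R(I^{\lambda_i})$ as the union $\bigcup_{e\geq t}\cC^e_R I^{\lceil p^e\lambda_i\rceil}$ (so it only needs that the chain over $R$ is increasing), whereas you invoke stabilization of the chain over $R$ directly to fix one $e$; since the chain is increasing and $R$ is Noetherian these amount to the same thing, and the remark you flag as the "delicate point" (choosing one $e$ simultaneously past all thresholds) is handled identically in the paper.
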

\begin{proof}
There exists $t\in\NN$ such that
for $e>N,$
$\tau_S((I S)^{\lambda_1})=\cC^e_S (I S)^{\lceil p^e \lambda_1 \rceil}$ and
$\tau_S((I S)^{\lambda_2})=\cC^e_S (I S)^{\lceil p^e \lambda_2 \rceil}.$
Then, $\cC^e_S (I S)^{\lceil p^e \lambda_1 \rceil}=\cC^e_S (I S)^{\lceil p^e \lambda_2 \rceil}$ for $e\geq t.$
By Lemma \ref{LemmaEqCartier}, we have that
$\cC^e_R I^{\lceil p^e \lambda_1 \rceil}=\cC^e_R I^{\lceil p^e \lambda_2 \rceil}$ for $e\geq t.$
Then,
$$
\tau_R(I^{\lambda_1})
=\bigcup_{e\in\NN} \cC^e_R I^{\lceil p^e \lambda_1 \rceil}
=\bigcup_{e\geq t} \cC^e_R I^{\lceil p^e \lambda_1 \rceil}
=\bigcup_{e\geq t} \cC^e_R I^{\lceil p^e \lambda_2 \rceil}
=\bigcup_{e\in\NN} \cC^e_R I^{\lceil p^e \lambda_2 \rceil}
=\tau_R (I^{\lambda_2}).$$
\end{proof}

\begin{corollary}\label{CorMainDS}
Let $S$ be a regular $F$-finite domain. Let $R\subseteq S$ be an
extension of $F$-finite Noetherian rings such that $R$ is a direct
summand of $S$. Let $I\subseteq R$ denote an ideal. Then, the set
of $F$-jumping numbers of $I$ in $R$ is a subset of the set
of $F$-jumping numbers of $I S$ in $S$. In particular, the set
of $F$-jumping numbers of $I$ in $R$ is formed by rational numbers and
has no accumulation points.
\end{corollary}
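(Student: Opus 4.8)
The plan is to read Theorem \ref{ThmEqTestIdeals} contrapositively and then import the already-established behavior of $F$-jumping numbers over the regular ring $S$; no new machinery should be needed. First I would prove the inclusion of $F$-jumping numbers. Let $\lambda\in\RR_{>0}$ be an $F$-jumping number of $I$ in $R$, so that $\tau_R(I^{\lambda})\neq\tau_R(I^{\lambda-\epsilon})$ for every $\epsilon>0$. Suppose toward a contradiction that $\lambda$ is not an $F$-jumping number of $IS$ in $S$; then $\tau_S((IS)^{\lambda})=\tau_S((IS)^{\lambda-\epsilon})$ for some $\epsilon>0$, and applying Theorem \ref{ThmEqTestIdeals} with $\lambda_1=\lambda$, $\lambda_2=\lambda-\epsilon$ gives $\tau_R(I^{\lambda})=\tau_R(I^{\lambda-\epsilon})$, a contradiction. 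Hence every $F$-jumping number of $I$ in $R$ is an $F$-jumping number of $IS$ in $S$, which is the asserted containment.

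For the remaining assertions I would invoke the regular case. Since $S$ is a regular $F$-finite domain, the set of $F$-jumping numbers of $IS$ in $S$ is a discrete set of rational numbers, by the work of Blickle, Musta\c{t}\u{a}, and Smith and its extension to arbitrary ideals \cite{BMS-MMJ,BMS-Hyp,ST-NonPrincipal}. A subset of a set of rational numbers consists of rational numbers, and any accumulation point in $\RR$ of a subset is an accumulation point of the ambient set, so the subset inherits the property of having no accumulation points. Applying this to the containment just obtained shows that the $F$-jumping numbers of $I$ in $R$ are rational and discrete; in particular the $F$-pure threshold of $I$, being the smallest $F$-jumping number, is rational.

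All of the substance lies in Theorem \ref{ThmEqTestIdeals} --- equivalently in the descent Lemmas \ref{LemmaRestrictionDmodCartier} and \ref{LemmaEqCartier}, which move equalities of Cartier-operator images from $S$ down to $R$ via a splitting. Given that, I expect no genuine obstacle in this corollary: it is a short, formal deduction. The only point I would flag is the degenerate case $I=(0)$ or $I=R$ (or any $I$ whose test ideals are already constant): then all $\tau_R(I^{\lambda})$ coincide, there are no $F$-jumping numbers on either side, and the statement holds vacuously, so the contraposition argument is unaffected.
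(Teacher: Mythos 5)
Your argument is correct and is exactly the paper's own (very terse) proof, spelled out: the first containment is the contrapositive reading of Theorem \ref{ThmEqTestIdeals} applied to a pair $\lambda$, $\lambda-\epsilon$, and the rationality/discreteness of $F$-jumping numbers in $R$ is inherited from the regular ring $S$ via \cite[Theorem B]{ST-NonPrincipal}. No discrepancies to flag.
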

\begin{proof}
The first claim follows immediately from the definition of
$F$-jumping  numbers and Theorem \ref{ThmEqTestIdeals}. The second
claim follows from the fact that the set of $F$-jumping numbers consits of rational numbers and it has no accumulation points for $F$-finite regular domains \cite[Theorem B]{ST-NonPrincipal}.
\end{proof}

\begin{remark}\label{RemAlg}
The previous result gives candidates to compute the $F$-jumping
numbers  of $I$. Furthermore, we obtain an algorithm to find the
test ideals and $F$-jumping numbers of $I$ given that we know the
corresponding objects for $I S.$

If $\tau_S(I S^\lambda)=\cC^e_S (I S)^{\lceil p^e \lambda \rceil}$
for  some $e\gg 0,$ then $\tau_R(I^\lambda)=\cC^e_R I^{\lceil p^e
\lambda \rceil}$. As a consequence, we can know whether a
$F$-jumping number of $I S$ is also a $F$-jumping number of $I$. To do this, we need to compute
first the next $F$-jumping number for $I S$, say $\lambda'$. We now
compute where the test ideal of $\tau_S(I S^\lambda)$ and
$\tau_S(I S^{\lambda'})$ stabilize. We use this to compute the test
ideals $\tau_R(I^\lambda)$ and $\tau_R(I^{\lambda'})$. Then, these
ideals differ if and only if $\lambda$ is an $F$-jumping number for
$I$ in $R$.
\end{remark}

It is natural to ask if the conclusion of Corollary \ref{CorMainDS}
holds in characteristic zero.

\begin{question}
Let $S=K[x_1,\ldots,x_n]$ be a polynomial ring over a field of characteristic zero.
Let $R\subseteq S$ be such that $R$
a direct summand of $S$. Let $I\subseteq R$ denote an ideal.
Is the  set of jumping numbers for the multiplier ideals of $I$ in $R$ a subset of the
set jumping numbers for the multiplier ideals of $I S$ in $S$?
\end{question}

%%%%%%%%%%%%%%%%%%%%%%%%%%%%%%%%%%%%%%%%%%%%%%%%%%%%%%%%%%%%%%%%%%%
\subsection{$F$-thresholds and Bernstein-Sato polynomials}
%%%%%%%%%%%%%%%%%%%%%%%%%%%%%%%%%%%%%%%%%%%%%%%%%%%%%%%%%%%%%%%%%%%

Musta\c{t}\u{a}, Takagi and Watanabe introduced 
some asymptotic invariants associated to an ideal $\a$
in an $F$-finite regular local ring $R$ \cite{MTW}. 
Roughly speaking, given any ideal $J\subseteq R$,
the $F$-threshold $c^J(\a)$ measures the containment of powers of $\a$ in the
Frobenius powers of $J$.
The set of $F$-thresholds when we vary $J$ coincides with the set of $F$-jumping
numbers of $\a$  \cite[Corollary 2.30]{BMS-MMJ}.
Together with the second author, they extended the notion of
$F$-threshold to all Noetherian rings in prime characteristic \cite{HMTW}. This definition is given by a limit, which they showed to exists for $F$-pure rings. This limit was recently showed to exist in full generality \cite[Theorem A]{DSNBP}.
In this subsection we extend previous relations between
$F$-threshold,  $F$-jumping numbers, and the Bernstein-Sato
polynomial, previously known only for polynomial rings \cite{MTW}.
For the sake of exposition, we restrict ourselves to principal
ideals and to finitely generated $\QQ$-algebras. We start by
recalling  this relation in the regular case. Let
$S=\QQ[x_1,\ldots,x_n]$. Let $\overline{f}$ denote the class of $f$
in $\FF_p[x_1,\ldots,x_n]$, where  $p$ is greater  any denominator
appearing in $f$. Then, for $p\gg 0,$
$$
b^S_f(\nu^{\a}_f(p^e))\equiv 0\quad \hbox{mod }p
$$
for every ideal $\a\subseteq \FF_p[x_1,\ldots,x_n]$ such that $\overline{f}\in\sqrt{\a}$ \cite[Proposition 3.11]{MTW}.

We start by recalling the definition of $F$-thresholds.

\begin{definition}
Let $R$ be a Noetherian rings, and $J,\a\subseteq R$ be ideals such
that $J\subseteq \sqrt{\a}.$ Let $\nu^\a_J(p^e)=\max\{t\;|\;
J^t\not\subseteq \a^{[p^e]}\}.$ We denote the $F$-threshold of $J$
with respect to $\a$ by
$$
c^\a(J)=\lim\limits_{e\to\infty} \frac{\nu^\a_J(p^e)}{p^e}.
$$
\end{definition}

It is worth mentioning that, in the singular case, the $F$-thresholds may differ
from the $F$-jumping numbers  (see \cite[Section 6]{HWY} for a few examples).
For this reason, it is not clear, a priori, that $F$-thresholds are rational  numbers that form a discrete set.
We note that  this is  the case if $R$ is a direct summand of a regular ring $S$.

\begin{proposition}
Let $S$ be a regular $F$-finite domain.
Let $R\subseteq S$ be a direct summand of $S$.
Then,
$c^\a (J)$
is a rational number for every ideal $J\subseteq R.$
Furthermore, the set
$$
\{c^\a (J) \; |\; \a\subseteq R \;\& \; J\subseteq \sqrt{\a} \}
$$
has no accumulation points.
\end{proposition}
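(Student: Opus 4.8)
The plan is to reduce the statement about $F$-thresholds in $R$ to the known rationality and discreteness of $F$-thresholds (equivalently, $F$-jumping numbers) in the regular ring $S$, exactly as was done for test ideals in Theorem \ref{ThmEqTestIdeals} and Corollary \ref{CorMainDS}. The key tool is the Cartier-operator description of the numbers $\nu^\a_J(p^e)$: since $\nu^\a_J(p^e)=\max\{t\mid J^t\not\subseteq \a^{[p^e]}\}$, and $J^t\subseteq \a^{[p^e]}$ is equivalent to $\cC^e_R J^t=\cC^e_R(J^t+\a^{[p^e]})$ being compatible with the smaller ideal, I want to compare the condition $J^t\subseteq \a^{[p^e]}$ in $R$ with the condition $(JS)^t\subseteq (\a S)^{[p^e]}$ in $S$.

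First I would observe that for any ideals $\mathfrak b, \mathfrak c\subseteq R$ one has $\mathfrak b\subseteq \mathfrak c$ if and only if $\mathfrak b S\cap R\subseteq \mathfrak c$, because $R$ is a direct summand of $S$ (apply the splitting $\beta$ to an expression of an element of $\mathfrak bS$ in terms of generators of $\mathfrak b$, noting that $\mathfrak b\subseteq \mathfrak bS\cap R$ always). Applying this with $\mathfrak b=J^t$ and $\mathfrak c=\a^{[p^e]}$, and using $(\a^{[p^e]})S=(\a S)^{[p^e]}$ and $J^tS=(JS)^t$, I get: $J^t\subseteq \a^{[p^e]}$ in $R$ is implied by $(JS)^t\subseteq (\a S)^{[p^e]}$ in $S$. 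This gives $\nu^\a_J(p^e)\le \nu^{\a S}_{JS}(p^e)$ for all $e$, hence after dividing by $p^e$ and taking limits (the limits exist by \cite[Theorem A]{DSNBP}, cited in the excerpt, and in the regular case by \cite{MTW}), $c^\a(J)\le c^{\a S}(JS)$.

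The harder direction — showing $c^\a(J)$ is actually rational and that the whole set of $F$-thresholds is discrete — requires more than the inequality; I want to show the set $\{c^\a(J)\}$ is contained in a set already known to be discrete in $S$. The natural candidate is: the set of $F$-thresholds $c^{\a S}(JS)$ as $\a, J$ range over ideals of $R$ is contained in the set of all $F$-thresholds of $S$, which equals the set of $F$-jumping numbers of $S$ by \cite[Corollary 2.30]{BMS-MMJ}, and this latter set is discrete and rational by \cite[Theorem B]{ST-NonPrincipal}. So it suffices to prove $c^\a(J)=c^{\a S}(JS)$, i.e. the reverse inequality $\nu^\a_J(p^e)$ and $\nu^{\a S}_{JS}(p^e)$ differ by a bounded amount, or better, that $J^t\subseteq \a^{[p^e]}$ in $R$ implies $(JS)^{ct}\subseteq (\a S)^{[p^e]}$ in $S$ for some fixed constant $c$ independent of $e,t$; combined with the first inequality this forces the limits to agree. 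Here I would invoke strong $F$-regularity of $S$ (it is regular) together with the fact that tight closure is trivial in $S$: if $J^t\subseteq \a^{[p^e]}$ in $R$ then $J^tS\subseteq \a^{[p^e]}S$ fails in general, so instead I would use the characterization of $F$-thresholds via test ideals. By \cite[Corollary 2.30]{BMS-MMJ} in $S$ and the analogous statement relating $c^\a(J)$ to the test ideals $\tau_R(J^\lambda)$ via the jumping numbers — precisely, $c^\a(J)$ is an $F$-jumping number of $J$ in $R$ whenever $\a=J$, and in general lies in the set of $F$-pure thresholds of localizations — one reduces to Corollary \ref{CorMainDS}.

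The cleanest route, and the one I would actually write, is therefore: by \cite[Corollary 2.30]{BMS-MMJ}, for a regular ring the set of all $F$-thresholds coincides with the set of all $F$-jumping numbers; the set of $F$-thresholds $\{c^{\a S}(JS)\}$ of $S$ is thus a subset of the $F$-jumping numbers of $S$, which is rational and discrete by \cite[Theorem B]{ST-NonPrincipal}. It then remains to establish $c^\a(J)=c^{\a S}(JS)$. The inequality $\le$ was shown above. For $\ge$, apply the splitting: if $(JS)^t\not\subseteq (\a S)^{[p^e]}$, pick $x\in (JS)^t\setminus(\a S)^{[p^e]}$; writing $x$ in terms of generators and applying $\beta$ will not immediately keep $x$ out of $\a^{[p^e]}$, so instead I would argue at the level of the asymptotic invariant using that $\nu^{\a S}_{JS}(p^e)\le \nu^{\a S\cap R}_{J}(p^e)\cdot(\text{bounded factor})$ via the Briançon–Skoda-type containment available from $\tau_S$ — concretely, $(\a S)^{[p^e]}\cap R\supseteq \a^{[p^e]}$ combined with a uniform bound $\mathfrak b S\cap R\subseteq \mathfrak b$ holding for $\mathfrak b$ a Frobenius power up to a fixed multiple, coming from the test element of $S$ relative to $R$. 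I expect the main obstacle to be precisely this: pinning down the correct uniform comparison between $(\a S)^{[p^e]}\cap R$ and $\a^{[p^e]}$ so that the two limits coincide, and this is where one must use that $R$ is strongly $F$-regular (a direct summand of a regular ring, by \cite{HHStrongFreg}) and hence has a common test element controlling all the Frobenius powers uniformly.
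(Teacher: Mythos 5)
Your plan --- prove $c^\a(J)=c^{\a S}(JS)$ and then cite rationality and discreteness of $F$-jumping numbers of the regular ring $S$ via \cite[Corollary 2.30]{BMS-MMJ} and \cite[Theorem B]{ST-NonPrincipal} --- is exactly the paper's argument, which quotes \cite[Proposition 2.2(v)]{HMTW} for the equality of $F$-thresholds. Your derivation of the inequality $\nu^\a_J(p^e)\le\nu^{\a S}_{JS}(p^e)$ is also correct: direct summands are contracted extensions, so $(JS)^t\subseteq(\a S)^{[p^e]}$ gives $J^t\subseteq J^tS\cap R\subseteq\a^{[p^e]}S\cap R=\a^{[p^e]}$.

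The gap is in the reverse inequality, which you call the ``harder direction'' and never actually close. You assert that ``if $J^t\subseteq\a^{[p^e]}$ in $R$ then $J^tS\subseteq\a^{[p^e]}S$ fails in general,'' and on the strength of that (false) claim you launch into a detour through test elements, Brian\c{c}on--Skoda-type comparisons, and an undetermined ``bounded factor,'' explicitly leaving the ``main obstacle'' unresolved. But that step is trivial: expansion along any ring map preserves ideal containment, so $J^t\subseteq\a^{[p^e]}$ in $R$ immediately gives $(JS)^t=J^tS\subseteq\a^{[p^e]}S=(\a S)^{[p^e]}$ in $S$, and the contrapositive yields $\nu^{\a S}_{JS}(p^e)\le\nu^\a_J(p^e)$. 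Combined with your first inequality, $\nu^\a_J(p^e)=\nu^{\a S}_{JS}(p^e)$ for every $e$, and dividing by $p^e$ gives $c^\a(J)=c^{\a S}(JS)$. The tangent involving $F$-jumping numbers of $J$ in $R$ is also misplaced: as the paper stresses, $F$-thresholds and $F$-jumping numbers of the singular ring $R$ need not coincide, which is exactly why one must pass to $S$, as you correctly intended at the start.
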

\begin{proof}
We have that $c^\a (J)=c^{\a S}(JS)$  \cite[Proposition 2.2(v)]{HMTW}. Every $F$-threshold, $c^{\a S} (JS),$ is an $F$-jumping
number for test ideals of $\a$ \cite{BMS-MMJ}. Both statements,
rationality and discreteness, follows from the fact  that the set of
$F$-jumping numbers consists in rational numbers and it has no accumulation points for $F$-finite regular
domains \cite[Theorem B]{ST-NonPrincipal}.
\end{proof}

We now start relating the Bernstein-Sato polynomial with $F$-thresholds for direct summands. For this we need to establish a lemma that allow us to restrict $\QQ$-linear differential operators to $\ZZ$-linear operators.

\begin{lemma}\label{LemmaClearCoef}
Let $S=\QQ[x_1,\ldots,x_n]$, $I\subseteq S$ be an ideal, and $R=S/I$.
Let $S_{\ZZ}=\ZZ[x_1,\ldots,x_n]$, $J=I\cap S_{\ZZ},$ and $R_{\ZZ}=S_{\ZZ}/J$.
Then, for every $\delta\in D_{R|\QQ},$ there exists $c\in\NN$ such that
$c\delta(R_{\ZZ})\subseteq R_{\ZZ}.$ Furthermore, $c\delta \in D_{R|\ZZ}.$
\end{lemma}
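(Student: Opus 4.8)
The plan is to reduce everything to a finite set of monomial coefficient conditions. First I would recall the description of $D_{R|\QQ}$ from the second Example in Section~\ref{Dmod}: since $R = S/I$ with $S = \QQ[x_1,\ldots,x_n]$, every $\delta \in D_{R|\QQ}$ lifts to an operator $\widehat\delta \in D_{S|\QQ}$ with $\widehat\delta(I) \subseteq I$, well-defined modulo $I D_{S|\QQ}$. Now $D_{S|\QQ}$ is the free $S$-module on the basis $\{\frac{1}{\alpha!}\partial^\alpha : \alpha \in \NN^n\}$ (first Example), so $\widehat\delta = \sum_{\alpha} g_\alpha \, \frac{1}{\alpha!}\partial^\alpha$ is a \emph{finite} sum with $g_\alpha \in S = \QQ[x_1,\ldots,x_n]$. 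Each $g_\alpha$ has finitely many monomials, each with a rational coefficient; let $c \in \NN$ be a common denominator of all of these (over all the finitely many $\alpha$). Then $c\, g_\alpha \in S_\ZZ = \ZZ[x_1,\ldots,x_n]$ for every $\alpha$.

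The key computation is then that the operator $\frac{1}{\alpha!}\partial^\alpha$, when applied to a monomial $x^\beta \in S_\ZZ$, produces $\binom{\beta}{\alpha} x^{\beta-\alpha}$ (interpreted as $0$ if $\beta \not\ge \alpha$ componentwise), and $\binom{\beta}{\alpha} \in \ZZ$. Hence $\frac{1}{\alpha!}\partial^\alpha(S_\ZZ) \subseteq S_\ZZ$, and therefore $c\widehat\delta = \sum_\alpha (c\,g_\alpha)\frac{1}{\alpha!}\partial^\alpha$ maps $S_\ZZ$ into $S_\ZZ$, i.e.\ $c\widehat\delta \in D_{S_\ZZ|\ZZ} = D_{S_\ZZ}$. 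I would then check that $c\widehat\delta$ descends to an operator on $R_\ZZ = S_\ZZ/J$: since $\widehat\delta(I)\subseteq I$ and $J = I \cap S_\ZZ$, for $g \in J \subseteq I$ we have $c\widehat\delta(g) \in I$, but also $c\widehat\delta(g) \in S_\ZZ$ by the previous step, so $c\widehat\delta(g) \in I \cap S_\ZZ = J$. Thus $c\widehat\delta$ induces a $\ZZ$-linear operator on $R_\ZZ$ of the same order, giving $c\delta(R_\ZZ) \subseteq R_\ZZ$ and $c\delta \in D_{R|\ZZ} = D_{R_\ZZ}$ (one should note $D_{R_\ZZ}$ sits inside $D_{R|\QQ}$ compatibly via $R_\ZZ \subseteq R$, since clearing denominators of the structure constants of a $\QQ$-linear operator of order $m$ can be done by induction on $m$ — this is really the same clearing-denominators argument phrased intrinsically).

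The main subtlety — more bookkeeping than genuine obstacle — is making sure the choice of $c$ is uniform: one must fix a single finite representation $\widehat\delta = \sum_\alpha g_\alpha \frac{1}{\alpha!}\partial^\alpha$, observe the sum is finite (bounded by the order of $\delta$) and each $g_\alpha$ is a genuine polynomial, and only then extract the common denominator. A second point to be careful about is that the lift $\widehat\delta$ is only well-defined modulo $I D_{S|\QQ}$, but this is harmless: a different lift changes $c\widehat\delta$ by an element of $I D_{S_\ZZ|\ZZ}$ after possibly enlarging $c$ to also clear denominators in the comparison term, and such an element acts as $0$ on $R_\ZZ$. Everything else is the elementary fact that divided-power operators preserve integrality of monomial coefficients, which is exactly what makes $D_{S_\ZZ|\ZZ}$ the $S_\ZZ$-span of the $\frac{1}{\alpha!}\partial^\alpha$.
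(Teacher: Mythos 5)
Your proof is correct and follows essentially the same route as the paper: lift $\delta$ to $D_{S|\QQ}$ via the quotient description, clear denominators in the finitely many polynomial coefficients, observe that the basis operators preserve $S_{\ZZ}$, and then check stability of $J = I\cap S_{\ZZ}$ so the cleared operator descends to $R_{\ZZ}$. The only cosmetic difference is the choice of $S$-module basis for $D_{S|\QQ}$: you work with the divided powers $\frac{1}{\alpha!}\partial^\alpha$ (integrality via $\binom{\beta}{\alpha}\in\ZZ$), while the paper uses the ordinary monomials $\partial^\alpha$ (integrality via $\beta!/(\beta-\alpha)!\in\ZZ$); since these bases differ by a nonzero rational scalar in each degree, the two clearing-denominators arguments are interchangeable.
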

\begin{proof}
Recall that
$$
D_{R|\QQ}=\frac{\{\partial \in D_{S|\QQ} \; |\; \partial(I)\subseteq I \}}{ID_{S|\QQ}}.
$$

Given $\delta \in D_{R|\QQ}$, let $\partial \in D_{S|\QQ}$ be such that
$\delta f=\overline{\partial f}$ for every $\overline{f}\in R.$ Since
$$
D_{S|\QQ}=S\left\langle \frac{d}{d x_1},\ldots,\frac{d}{d x_n}\right\rangle,
$$
there exists an integer $c$ such that
$$
c\partial\in S_{\ZZ}\left\langle\frac{d}{d x_1},\ldots,\frac{d}{d x_n}\right\rangle.
$$
Furthermore, $c\partial (S_{\ZZ})\subseteq S_{\ZZ}$ and $c\partial (J)\subseteq
J.$ It follows that $c\delta f=\overline{c\partial f}\in R_{\ZZ}$ for
every $\overline{f}\in R_{\ZZ}.$  Moreover, $c\delta\in D_{R|\ZZ}$ as
$c\partial\in D_{S_{\ZZ}|\ZZ}.$
\end{proof}

\begin{remark}
Let $R$ be a finitely generated $\ZZ$-algebra. Then, there is a ring
morphism $D_{R|\ZZ}\to D_{R\otimes_\ZZ\FF_p|\FF_p},$ defined by
$\delta\mapsto\overline{\delta},$  where
$\overline{\delta} \cdot \overline{f}=\overline{\delta \cdot f}.$ Furthermore,
this map  sends $D^n_{R|\ZZ}$ to $
D^n_{R\otimes_\ZZ\FF_p|\FF_p}.$  We point out that this map may not be surjective \cite[Pages 384-385 ]{DModFSplit}.
\end{remark}

We now relate the Bernstein-Sato polynomial to $F$-thresholds.

\begin{theorem}\label{ThmBSFThresholds}
Let $S=\QQ[x_1,\ldots,x_n]$, $I\subseteq S$ be an ideal, and
$R=S/I$. Let $S_{\ZZ}=\ZZ[x_1,\ldots,x_n]$, $J=I\cap S_{\ZZ},$ and
$R_{\ZZ}=S_{\ZZ}/J$. Suppose that $b^R_f(s)$ exists. Then, there
exists $m\in\NN$, depending only on $R_{\ZZ}$, such that
$$
b^R_f(\nu^\a_{\overline{f}}(p^e))\equiv 0\quad \hbox{mod }p
$$
for every prime number $p>m,$ and every ideal $\a\subseteq
R_{\ZZ}/pR_{\ZZ}$ such that $\overline{f}\in\sqrt{\a}$. In
particular, this holds when $R$ is a direct summand of a polynomial
ring over $\QQ.$
\end{theorem}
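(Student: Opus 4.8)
The plan is to mimic the argument of Musta\c{t}\u{a}--Takagi--Watanabe \cite[Proposition 3.11]{MTW}, but carried out over the (possibly singular) base $R$ rather than the polynomial ring $S$, using Lemma \ref{LemmaClearCoef} to make sense of reduction mod $p$. First I would fix a functional equation witnessing the Bernstein-Sato polynomial: since $b^R_f(s)$ exists, there is $\delta(s)\in D_{R|\QQ}[s]$ with $\delta(t)\cdot f^{t+1}=b^R_f(t)f^t$ in $R_f$ for every $t\in\ZZ$. By Lemma \ref{LemmaClearCoef} (applied to the finitely many coefficients of $\delta(s)$, which live in $D_{R|\QQ}$), there is a single integer $c\in\NN$, depending only on $R_\ZZ$ (and on the chosen $\delta$), so that $c\,\delta(s)\in D_{R_\ZZ|\ZZ}[s]$; clearing a further common denominator from $b^R_f(s)$ if necessary, I may assume that the whole equation $c\,\delta(t)\cdot f^{t+1}=\big(c\,b^R_f(t)\big)f^t$ takes place in $(R_\ZZ)_f$. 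Let $m$ be chosen larger than $c$, than all the denominators occurring in the coefficients of $f$, $\delta(s)$ and $b^R_f(s)$, and than the leading coefficient of $b^R_f(s)$; this $m$ depends only on $R_\ZZ$ and $f$. For a prime $p>m$, reduce the equation modulo $p$: using the ring map $D_{R_\ZZ|\ZZ}\to D_{R_\ZZ/pR_\ZZ|\FF_p}$ of the Remark preceding the theorem, and noting that $c$ is now a unit mod $p$, we recover a functional equation $\overline{\delta}(t)\cdot \overline{f}^{\,t+1}=b^R_f(t)\,\overline{f}^{\,t}$ in $(R_\ZZ/pR_\ZZ)_{\overline f}$ for all $t\in\ZZ$, with $b^R_f(t)$ read in $\FF_p$.

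Next I would run the standard $\nu$-argument. Fix $e\gg 0$ and set $\nu=\nu^\a_{\overline f}(p^e)$, so that $\overline f^{\,\nu}\notin \a^{[p^e]}$ while $\overline f^{\,\nu+1}\in \a^{[p^e]}$ (the latter by definition of $\nu$, the former likewise). Apply the reduced functional equation at $t=\nu$: we get
\[
\overline{\delta}(\nu)\cdot \overline f^{\,\nu+1}=b^R_f(\nu)\,\overline f^{\,\nu}\quad\text{in }(R_\ZZ/pR_\ZZ)_{\overline f}.
\]
Since $\overline f^{\,\nu+1}\in\a^{[p^e]}$ and $\a^{[p^e]}$ is a $D^{(e)}_{R_\ZZ/pR_\ZZ|\FF_p}$-submodule of $R_\ZZ/pR_\ZZ$ (Frobenius powers are preserved by $p^e$-linear, hence by differential, operators — one checks $D^{(e)}$ kills nothing out of $\a^{[p^e]}$ because $D^{(e)}_R=\Hom_{R^{p^e}}(R,R)$ acts $R^{p^e}$-linearly), and since the coefficients of $\overline{\delta}(\nu)$ have order $<p^e$ for $e$ large, we conclude that $\overline{\delta}(\nu)\cdot\overline f^{\,\nu+1}\in\a^{[p^e]}$, again after clearing the denominator $\overline f$. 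Therefore $b^R_f(\nu)\,\overline f^{\,\nu}\in\a^{[p^e]}$. But $\overline f^{\,\nu}\notin\a^{[p^e]}$, and $\a^{[p^e]}$ is in particular such that $R_\ZZ/pR_\ZZ$ modulo $\a^{[p^e]}$... here I must be slightly careful: $b^R_f(\nu)$ is a scalar in $\FF_p$, so $b^R_f(\nu)\,\overline f^{\,\nu}\in\a^{[p^e]}$ with $\overline f^{\,\nu}\notin\a^{[p^e]}$ forces $b^R_f(\nu)=0$ in $\FF_p$, i.e. $b^R_f(\nu^\a_{\overline f}(p^e))\equiv 0\bmod p$. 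Since $\nu^\a_{\overline f}(p^e)$ is by definition achieved for every $e$ (not only $e\gg0$) and the functional equation holds for all $t\in\ZZ$, the congruence holds for all $e$. The last sentence of the statement then follows from Corollary \ref{BSpoly_exists}, which guarantees $b^R_f(s)$ exists when $R$ is a direct summand of a polynomial ring over $\QQ$.

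The main obstacle is the bookkeeping around the constant $m$: one must verify that a \emph{single} $m$, depending only on $R_\ZZ$ (and $f$), simultaneously clears denominators in $\delta(s)$ and $b^R_f(s)$, makes $c$ invertible, and ensures the reduction map on differential operators behaves well — this is exactly what Lemma \ref{LemmaClearCoef} is engineered to provide, but it requires applying that lemma uniformly to all coefficients of the operator $\delta(s)$ at once. A secondary subtlety is the clean handling of the localization at $\overline f$: the identity $\overline{\delta}(\nu)\cdot\overline f^{\,\nu+1}=b^R_f(\nu)\overline f^{\,\nu}$ lives a priori in $(R_\ZZ/pR_\ZZ)_{\overline f}$, so to deduce a membership in the honest ideal $\a^{[p^e]}\subseteq R_\ZZ/pR_\ZZ$ one multiplies through by a suitable power of $\overline f$ and uses that $\overline f\notin\sqrt{\a^{[p^e]}}$ is false — rather, one uses that $\a^{[p^e]}$ is $\overline f$-saturated is also false, so the correct route is to observe that the original equation $\delta(t)\cdot f^{t+1}=b(t)f^t$ already holds in $R_f$ with both sides genuine elements after multiplying by $f^{N}$ for $N$ absorbing the negative powers of $f$ introduced by the action of $\delta$ on $f^{t+1}$; choosing $t=\nu$ large enough (equivalently $e$ large) this is harmless, and the membership $b^R_f(\nu)f^\nu\cdot f^N\in\a^{[p^e]}$ together with $f^\nu\cdot f^N\notin\a^{[p^e]}$ (valid since $\a^{[p^e]}$ is $\overline f$-primary-free, i.e. $\overline f\notin\sqrt{\a}$ would contradict the hypothesis $\overline f\in\sqrt\a$; instead one uses $\nu^\a_{\overline f\,f^N}$ adjusts $\nu$ by a bounded amount) yields $b^R_f(\nu)=0$. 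I would present this denominator/saturation juggling carefully but briefly, as it is routine once the setup from Lemma \ref{LemmaClearCoef} is in place.
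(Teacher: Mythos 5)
Your proof follows the paper's argument: clear denominators via Lemma \ref{LemmaClearCoef}, reduce mod $p$, and evaluate the functional equation at $t=\nu$ to force $b^R_f(\nu)\,\overline{f}^{\,\nu}\in\a^{[p^e]}$ while $\overline{f}^{\,\nu}\notin\a^{[p^e]}$. Two cleanups. First, the localization worry in your final paragraph is moot: for $t\geq 0$ the operator $\delta(t)\in D_{R_\ZZ|\ZZ}$ acts on the honest element $f^{t+1}\in R_\ZZ$, so both sides of the functional equation already lie in $R_\ZZ$ (no negative powers of $f$ enter --- the formula involving $[\delta,f^t]\cdot\frac{v}{f^t}$ only arises when acting on genuine fractions, which is not the case here), and the reduction mod $p$ is an identity in $R_\ZZ/pR_\ZZ$, so the saturation discussion can be dropped entirely. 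Second, the bound on the order of $\overline{\delta(\nu)}$ should be made independent of $e$: choose $p$ larger than the orders of all the coefficients $\delta_j$, so that $\overline{\delta(\nu)}$ has order $<p$ and hence lies in $D^{(1)}_{R_\ZZ/pR_\ZZ}\subseteq D^{(e)}_{R_\ZZ/pR_\ZZ}$ for every $e\geq 1$; this preserves $\a^{[p^e]}$ for all $e$ simultaneously and eliminates the ``$e\gg 0$'' caveat, which otherwise leaves a genuine quantifier gap in your write-up.
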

\begin{proof}
We pick a differential operator $\delta(s)=\sum^m_{j=1}\delta_j s^j\in D_{R|\QQ}[s]$
such that
\begin{equation}\label{BSclean}
\delta(t)f^{t+1}=b_f(t) f^{t}
\end{equation}
for every $t\in\ZZ.$
By Lemma \ref{LemmaClearCoef} we may clean up the coefficients of $\delta(s)$ so,
without any loss of generality, we may assume that $\delta(s)\in D_{R|\ZZ}[s]$ as long as we pick $p>c$, where $c$ is as in Lemma \ref{LemmaClearCoef}.
Moreover, me may multiply both sides of Equation \ref{BSclean} by the least common multiple of
the denominators of every coefficient of $b^R_f(s)$.
We pick $p$ bigger than any of this denominators.
We may also assume that it is an
equation in $R_{\ZZ}$.

Now we reduce this equation mod $p$.
By our choice of $p,$ $b^R_f(s)$ is not the zero polynomial in $\FF_p[s].$ We have the equation
$$\overline{\delta(t)}\overline{f}^{t+1}=\overline{b_f(t)}\overline{f}^{t}$$
for every $t\in\NN.$
We pick
$p>\max{n_j},$ where $n_j$ denotes the order of the differential operator $\delta_j$. Then,
the order of $\overline{\delta(t)}$ is bounded above by $p-1,$ so
 $\overline{\delta(t)}\in D_R^{(1)}=\Hom_{R}(F_* R ,F_* R)$  for every $t\in\NN$.
Let $\a\subseteq R_{\ZZ}/pR_{\ZZ}$ such that
$\overline{f}\in\sqrt{\a}$. We set $\nu=\nu^{\a}_f(p^e),$ thus we have
that $\overline{f}^{\nu+1}\in \a^{[p^e]}$ and
$\overline{\delta(\nu+1)} \a^{[p^e]}\subseteq \a^{[p^e]}.$ Then,
$$
b_f(\nu) \overline{f}^{\nu}=\overline{\delta(\nu)}\overline{f}^{\nu+1}\in \a^{[p^e]}.
$$
Since $\overline{f}\not\in I^{[p^e]},$ we deduce that
$b_f(\nu) \equiv 0$ mod $p$.
\end{proof}

We also point out that $\nu^\a_f(p^e)$ is the $e$-truncation of the base $p$-expansion of $c^\a(f)$.
For a similar result for $F$-pure thresholds, we refer to \cite[Key Lemma]{DanielMRL}.

Recall that the set of $F$-thresholds may differ from the set of
$F$-jumping numbers for non-regular rings. Our next result shows
that we still have an analogous relation between Bernstein-Sato
polynomials and  $F$-jumping numbers.

{
\begin{theorem}\label{ThmBSFjumps}
Let $S=\QQ[x_1,\ldots,x_n]$, $I\subseteq S$ be an ideal, and
$R=S/I$. Let $S_{\ZZ}=\ZZ[x_1,\ldots,x_n]$, $J=I\cap S_{\ZZ},$ and
$R_{\ZZ}=S_{\ZZ}/J$. Suppose that $b^R_f(s)$ exists. Then, there
exists $m\in\NN$, depending only on $R_{\ZZ}$,
$$
b^R_f(\nu)\equiv 0\quad \hbox{mod }p
$$
if  $p>m$ and $C^e_{R_{\ZZ}\otimes_\ZZ \FF_p } \overline{f}^\nu\neq
C^e_{R_{\ZZ}\otimes_\ZZ \FF_p } \overline{f}^{\nu+1}.$ In particular,
this holds when $R$ is a direct summand of a polynomial ring over
$\QQ.$
\end{theorem}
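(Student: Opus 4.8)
The plan is to mirror the proof of Theorem \ref{ThmBSFThresholds}, replacing the $F$-threshold filtration by the Cartier-module filtration that governs $F$-jumping numbers. First I would pick a differential operator $\delta(s)=\sum_{j=1}^k \delta_j s^j\in D_{R|\QQ}[s]$ realizing the Bernstein-Sato polynomial, i.e.\ $\delta(t)f^{t+1}=b^R_f(t)f^t$ for all $t\in\ZZ$. Using Lemma \ref{LemmaClearCoef} I clear denominators so that, after enlarging a bound $c$, the operator lies in $D_{R|\ZZ}[s]$ and the equation takes place in $R_\ZZ$; I also multiply through by the lcm of the denominators of the coefficients of $b^R_f(s)$. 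I then set $m$ to be the maximum of $c$, all those denominators, and all the orders $n_j=\operatorname{ord}(\delta_j)$, so that $m$ depends only on $R_\ZZ$ and the fixed data of $\delta$. For a prime $p>m$ I reduce mod $p$; since $p$ exceeds the denominators, $\overline{b^R_f(s)}\in\FF_p[s]$ is nonzero, and since $p>\max n_j$ each $\overline{\delta(t)}$ has order at most $p-1$, hence $\overline{\delta(t)}\in D^{(1)}_{\bar R}=\Hom_{\bar R}(F_*\bar R,F_*\bar R)$, where $\bar R=R_\ZZ\otimes_\ZZ\FF_p$.

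Now suppose $\nu\in\NN$ is such that $\cC^1_{\bar R}\,\overline f^{\,\nu}\neq \cC^1_{\bar R}\,\overline f^{\,\nu+1}$; I must show $b^R_f(\nu)\equiv 0\pmod p$. Arguing by contradiction, assume $\overline{b^R_f(\nu)}\neq 0$ in $\FF_p$, so it is a unit. From the reduced functional equation $\overline{\delta(\nu)}\cdot\overline f^{\,\nu+1}=\overline{b^R_f(\nu)}\,\overline f^{\,\nu}$ and the fact that $\overline{\delta(\nu)}\in D^{(1)}_{\bar R}$, I get $\overline f^{\,\nu}\in D^{(1)}_{\bar R}\,\overline f^{\,\nu+1}$, hence $D^{(1)}_{\bar R}\,\overline f^{\,\nu}= D^{(1)}_{\bar R}\,\overline f^{\,\nu+1}$ (the reverse inclusion being automatic). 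The remaining step is to pass from this equality of $D^{(1)}$-submodules to the equality $\cC^1_{\bar R}\,\overline f^{\,\nu}=\cC^1_{\bar R}\,\overline f^{\,\nu+1}$ of Cartier-module images, contradicting the hypothesis.

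The link between the $D^{(1)}$ and $\cC^1$ pictures is exactly Lemma \ref{LemmaRestrictionDmodCartier}, which requires $\bar R$ to be $F$-pure. So before invoking it I need to know that $\bar R=R_\ZZ\otimes_\ZZ\FF_p$ is $F$-pure for $p\gg 0$; in the direct-summand case this is automatic since a direct summand of a regular (hence strongly $F$-regular, hence $F$-pure) ring is strongly $F$-regular by \cite{HHStrongFreg}, and I would fold the finitely many bad primes into $m$. For a general $R$ with $b^R_f(s)$ existing one would simply add $F$-purity of $\bar R$ as a hypothesis or note it as the needed condition; but since the statement only claims the conclusion ``in particular'' for direct summands, it suffices to treat that case. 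Applying Lemma \ref{LemmaRestrictionDmodCartier} with $I=(\overline f^{\,\nu})$ and $\b=(\overline f^{\,\nu+1})$ (or rather with the relevant $e=1$) yields $\cC^1_{\bar R}\,\overline f^{\,\nu}=\cC^1_{\bar R}\,\overline f^{\,\nu+1}$, the desired contradiction, and completes the proof.

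The main obstacle is bookkeeping the uniformity of $m$: one must be sure every prime that is excluded — those dividing denominators of $\delta$ or of $b^R_f$, those at most $\max n_j$, and those for which $R_\ZZ\otimes\FF_p$ fails to be $F$-pure — is controlled by a single $m$ depending only on $R_\ZZ$ and the fixed operator, not on $p$; once that is in place the argument is a direct translation of the $F$-threshold case via Lemmas \ref{LemmaClearCoef} and \ref{LemmaRestrictionDmodCartier}.
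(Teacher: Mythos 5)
Your strategy is the paper's, but as written your argument only addresses the case $e=1$ of the hypothesis $\cC^e_{R_\ZZ\otimes_\ZZ\FF_p}\overline{f}^{\,\nu}\neq \cC^e_{R_\ZZ\otimes_\ZZ\FF_p}\overline{f}^{\,\nu+1}$, whereas the theorem's $e$ is arbitrary. You derive $D^{(1)}_{\bar R}\overline{f}^{\,\nu}=D^{(1)}_{\bar R}\overline{f}^{\,\nu+1}$, apply Lemma~\ref{LemmaRestrictionDmodCartier} at level $1$, and land on $\cC^1_{\bar R}\overline{f}^{\,\nu}=\cC^1_{\bar R}\overline{f}^{\,\nu+1}$; that contradicts the hypothesis only if the $e$ appearing there happens to be $1$. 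The step the paper supplies, and that you should insert, is to propagate $\overline{f}^{\,\nu}\in D^{(1)}_{\bar R}\overline{f}^{\,\nu+1}$ up to every level: since $D^{(1)}_{\bar R}\subseteq D^{(e)}_{\bar R}$ and $D^{(e)}_{\bar R}\circ D^{(e)}_{\bar R}=D^{(e)}_{\bar R}$, one has
$$D^{(e)}_{\bar R}\overline{f}^{\,\nu}\subseteq D^{(e)}_{\bar R}D^{(1)}_{\bar R}\overline{f}^{\,\nu+1}=D^{(e)}_{\bar R}\overline{f}^{\,\nu+1}$$
for all $e\ge 1$, hence equality (the reverse containment being automatic), and only then does Lemma~\ref{LemmaRestrictionDmodCartier} give $\cC^e_{\bar R}\overline{f}^{\,\nu}=\cC^e_{\bar R}\overline{f}^{\,\nu+1}$ at the level $e$ where the hypothesis is violated. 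With that line added, your proof coincides with the paper's.

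On the other hand you are more careful than the paper in one respect: Lemma~\ref{LemmaRestrictionDmodCartier} requires $\bar R=R_\ZZ\otimes_\ZZ\FF_p$ to be $F$-pure, which you rightly observe is automatic for direct summands of a polynomial ring (for $p\gg 0$) via \cite{HHStrongFreg}, but is a genuine hypothesis in the general formulation of the theorem; the paper invokes the lemma without remarking on this.
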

\begin{proof}
As in the proof of Theorem \ref{ThmBSFThresholds}, for $p\gg 0$,
we can assume that
$\delta(s)\in D_{R|\ZZ}[s]$. We can assume that no denominator in any coefficient in  $b^R_f(s)$ becomes zero.
Then,  $b^R_f(s)$ is not the zero polynomial in $\FF_p[s].$ We have the equation
$$\overline{\delta(t)}\overline{f}^{t+1}=\overline{b_f(t)}\overline{f}^{t}$$
for every $t\in\NN.$ We can also assume that
$\overline{\delta(t)}\in D_R^{(1)}=\Hom_{R}(F_* R,F_* R)$  for
every $t\in\NN$.

If $b_f(\nu)\not\equiv 0$ mod $p$, $\overline{f}^\nu\in D^{(1)}
\overline{f}^{\nu+1}.$ Then,
$$
D^{(e)}  \overline{f}^{\nu}\subseteq D^{(e)} D^{(1)} \overline{f}^{\nu+1}=D^{(e)}\overline{f}^{\nu+1}.
$$
Since $D^{(e)}  \overline{f}^{a+1}\subseteq D^{(e)}
\overline{f}^{\nu}$, we conclude that $D^{(e)}  \overline{f}^{\nu}=
D^{(e)}  \overline{f}^{\nu+1}$. Then, $C^e_{R_{\ZZ}\otimes_\ZZ \FF_p }
\overline{f}^\nu=C^e_{R_{\ZZ}\otimes_\ZZ \FF_p } \overline{f}^{\nu+1}$
by Remark \ref{LemmaRestrictionDmodCartier}, which contradicts our
hypothesis. Hence,  $b_f(\nu)\equiv 0$ mod $p$.
\end{proof}
}
\begin{remark}
Theorem \ref{ThmBSFThresholds} and \ref{ThmBSFjumps} also hold for
ideals that are not principal  if we consider the Bernstein-Sato
polynomial for arbitrary ideals (see Remark \ref{RmkGralBS}). These
claims can be also extended for any field of characteristic zero. We
focused on principal ideals over finitely generated $\QQ$-algebras
for the sake of  accessibility. The experts would easily adapt the
arguments in the proof to the more general setting.
\end{remark}

\section*{Acknowledgments}
We thank Jack Jeffries for suggesting Examples \ref{ExampleDiffBS} and   \ref{ExampleNoBS}.   We also thank Jen-Chieh Hsiao,  Luis Narv\'aez-Macarro and Karl Schwede for several helpful  comments.
The third author is grateful to Karen E. Smith for inspiring conversations.

\bibliographystyle{alpha}
\bibliography{References}

\end{document}